\newtheorem{theorem}{Theorem}
\newtheorem{lemma}{Lemma}[section]
\newtheorem{proposition}{Proposition}[section]
\newtheorem{corollary}{Corollary}
\newtheorem{remark}{Remark}[section]
\newcommand{\mR}{\mathbb{R}}
\newcommand{\mN}{\mathbb{N}}
\newcommand{\mE}{\mathbb{E}}
\newcommand{\la}{\langle}
\newcommand{\ra}{\rangle}
\newcommand{\ep}{\epsilon}
\newcommand{\gsi}{g_{s,i}}
\newcommand{\bgsi}{\bar{g}_{s,i}}
\newcommand{\bsi}{b_{s,i}}
\newcommand{\asi}{a_{s,i}}
\newcommand{\msi}{m_{s,i}}
\newcommand{\vsi}{v_{s,i}}
\newcommand{\vx}{\bm{x}}
\newcommand{\vy}{\bm{y}}
\newcommand{\vm}{\bm{m}}
\newcommand{\vv}{\bm{v}}
\newcommand{\tvv}{\tilde{\bm{v}}}
\newcommand{\vb}{\bm{b}}
\newcommand{\va}{\bm{a}}
\newcommand{\vep}{\boldsymbol{\epsilon}}
\newcommand{\vg}{\bm{g}}
\newcommand{\vz}{\bm{z}}
\newcommand{\vxi}{\boldsymbol{\xi}}
\newcommand{\mG}{\mathcal{G}}
\newcommand{\mL}{\mathcal{L}}
\newcommand{\mH}{\mathcal{H}}
\newcommand{\mI}{\mathcal{I}}
\newcommand{\mJ}{\mathcal{J}}
\newcommand{\delx}{\Delta^{(x)}}
\newcommand{\dely}{\Delta^{(y)}}
\newcommand{\mF}{\mathcal{F}}
\newcommand{\mO}{\mathcal{O}}
\newcommand{\del}{\Delta}
\newcommand{\tdel}{{\Delta}}
\newcommand{\lam}{\Lambda}
\newcommand{\tL}{\tilde{L}}
\title{Revisiting Convergence of AdaGrad with Relaxed Assumptions}
\author[]{\href{mailto:<yusuhong@zju.edu.cn>}{Yusu Hong}{}}
\affil[]{%
    Center for Data Science and School of Mathematical Sciences, Zhejiang University 
}
\author[]{\href{mailto:<junhong@zju.edu.cn>}{Junhong Lin\thanks{The corresponding author is Junhong Lin (junhong@zju.edu.cn.)}}{}}
\affil[]{%
    Center for Data Science, Zhejiang University
}
\begin{document}
\maketitle
\begin{abstract}
    In this study, we revisit the convergence of AdaGrad with momentum (covering AdaGrad as a special case) on non-convex smooth optimization problems. We consider a general noise model where the noise magnitude is controlled by the function value gap together with the gradient magnitude. This model encompasses a broad range of noises including bounded noise, sub-Gaussian noise, affine variance noise and the expected smoothness, and it has been shown to be more realistic in many practical applications. Our analysis yields a probabilistic convergence rate which, under the general noise, could reach at $\tilde{\mathcal{O}}(1/\sqrt{T})$. This rate does not rely on prior knowledge of problem-parameters and could accelerate to $\tilde{\mathcal{O}}(1/T)$ where $T$ denotes the total number iterations, when the noise parameters related to the function value gap and noise level are sufficiently small. The convergence rate thus matches the lower rate for stochastic first-order methods over non-convex smooth landscape up to logarithm terms \citep{arjevani2023lower}. We further derive a convergence bound for AdaGrad with momentum, considering the generalized smoothness where the local smoothness is controlled by a first-order function of the gradient norm.
\end{abstract}

\section{Introduction}
In recent years, AdaGrad \citep{duchi2011adaptive} and its variants have witnessed a large success in solving the following stochastic optimization problems:
\begin{align*}
    \min_{\vx \in \mR^d } f(\vx), \quad \text{where} \quad f(\vx) = \mE_{{\bm \zeta} }[f_{{\bm \zeta}}(\vx;{\bm \zeta})].
\end{align*}
Distinct from vanilla Stochastic Gradient Descent (SGD) \citep{robbins1951stochastic}, which typically requires smoothness or Lipschitz constants for tuning step-sizes, AdaGrad
applies an adaptive step-size with each coordinate satisfying that
\begin{align*}
    \eta_{t,i} = \frac{\eta}{\sqrt{\sum_{s=1}^t g_{s,i}^2}+\ep}, \quad \forall t \in \mN, i \in [d],
\end{align*}
where $\ep > 0$ is a constant and $\gsi$ denotes the $i$-th coordinate of the stochastic gradient $\vg_s$. This approach assigns larger step-sizes for infrequent features whose corresponding gradients are small, reminding learners of taking notice of those infrequent features. It also liberates the algorithm from the need for problem-parameters, which may be challenging to obtain in practical applications. Moreover, AdaGrad's efficiency has been empirically validated, especially in scenarios with sparse gradients \citep{duchi2011adaptive}.

Numerous works have studied the convergence of AdaGrad and its scalar version, AdaGrad-Norm \citep{duchi2011adaptive,streeter2010lesson}. \cite{duchi2011adaptive} first provided the convergence bound of AdaGrad on online convex optimization. In non-convex smooth scenario, \cite{ward2020adagrad} first obtained a convergence bound for AdaGrad-Norm without pre-tuning step-sizes, assuming bounded gradients and noises. \cite{alina2023high} proved the convergence for AdaGrad under coordinate-wise sub-Gaussian noise, discarding the bounded gradient assumption. 

Recently, several studies have proven AdaGrad-Norm's convergence under the affine variance noise, both in expectation \citep{faw2022power,wang2023convergence} and in high probability \citep{attia2023sgd}. The noise model assumes that the stochastic gradient $g(\vx),\forall \vx \in \mR^d$ satisfies that for some constants $B,C > 0$,
\begin{equation}\label{eq:affine}
    \begin{split}
         \mE &\|g(\vx)-\nabla f(\vx)\|^2 \le   B \|\nabla f(\vx)\|^2 +C  \\
        \text{or} \quad &\|g(\vx)-\nabla f(\vx)\|^2 \le   B \|\nabla f(\vx)\|^2 + C.  
    \end{split}
\end{equation}
This noise model, verified in machine learning applications with feature noise \citep{fuller2009measurement,khani2020feature}, and in robust linear regression \citep{xu2008robust}, offers a more realistic portrayal by allowing the noise norm to increase with the gradient norm, covering both bounded noise and sub-Gaussian noise. These studies not only provided a convergence rate of $\tilde{\mO}(1/\sqrt{T})$, but also addressed challenges posed by the entanglement of adaptive step-sizes and stochastic gradients, and the additional variance in \eqref{eq:affine}. However, to the best of our knowledge, none of existing works have proved the convergence of vanilla AdaGrad under \eqref{eq:affine} without assuming bounded gradients. Moreover, the distinct step-size for each coordinate in AdaGrad, as opposed to an unified step-size for all coordinates in AdaGrad-Norm, brings more challenges when considering \eqref{eq:affine}.

In this paper, we provide a deep analysis framework and establish a probabilistic convergence bound for AdaGrad with heavy-ball style momentum, covering AdaGrad as a special case. More importantly, we consider a general noise model such that for some constants $A,B,C \ge 0$,
\begin{align}\label{eq:noise}
    \|g(\vx) - \nabla f(\vx)\|^2 \le A(f(\vx)-f^*) + B \|\nabla f(\vx)\|^2 + C,
\end{align}
where $f(\vx) \ge f^*,\forall \vx \in \mR^d$.
It's obvious to verify that \eqref{eq:noise} is strictly weaker than the almost surely affine variance noise in \eqref{eq:affine}, and thus than the bounded noise or sub-Gaussian noise condition\footnote{For conciseness, we mainly consider the almost-sure version of this general noise model. Extending our high probability analysis from an almost-sure version to a sub-Gaussian version is easy, which will be included in Appendix.}. Indeed, \eqref{eq:noise} could be regarded as an extension of \eqref{eq:affine} and the following expected smoothness condition \citep{gower2019sgd,grimmer2019convergence,wangjmlr2023convergence},
\begin{equation}\label{eq:expect_smooth}
    \begin{split}
        \mE &\|g(\vx)\|^2 \le A(f(\vx)-f^*) + B  \\
        \text{or} \quad &\|g(\vx)\|^2 \le A(f(\vx)-f^*) + B.
    \end{split}
\end{equation}
Existing researches have studied SGD's convergence behavior under \eqref{eq:noise} with smooth objective functions, both in asymptotic \citep{poljak1973pseudogradient} and non-asymptotic view \citep{khaled2022better}. 
More importantly, it has been shown that numerous of practical stochastic gradient settings satisfy \eqref{eq:noise} but out of the range of \eqref{eq:affine}, including commonly used perturbation, sub-sampling and compression \citep{khaled2022better}. However, the analysis for SGD could not be directly extended to AdaGrad due to the correlation of adaptive-sizes and stochastic gradients, and the coordinate-wise performance in AdaGrad. 

Finally, we apply our analysis framework to the $(L_0,L_1)$-smoothness where the local smoothness of $f$ satisfies that when $\|\vy-\vx\| \le 1/L_1$,
\begin{align}
    \|\nabla f(\vy) - \nabla f(\vx) \|\le (L_0 + L_1\|\nabla f(\vx)\|) \|\vy-\vx\|. \label{eq:general}
\end{align}
This assumption was proposed by \citep{zhang2020why} through empirical studies on language models and later verified in large language models, e.g., \citep{zhang2020improved,crawshaw2022robustness}. \eqref{eq:general} generalizes the standard global smoothness condition and allows unbounded smooth parameter, bringing more challenges for the convergence analysis of adaptive methods. Previous works \citep{faw2023beyond,wang2023convergence} have derived convergence bounds for AdaGrad-Norm with \eqref{eq:affine} and \eqref{eq:general}. Also, prior knowledge of problem-parameters is necessary as pointed out by the counter examples in \citep{wang2023convergence}. However, the analysis for coordinate-wise AdaGrad is non-trivial and requires more delicate constructions, particularly when considering the weaker noise assumption in \eqref{eq:noise}.

In the following, we will summarize our main contributions as follows. We also refer readers to see the comparison of our results with existing works in Table \ref{table} from the appendix.

\paragraph{Contribution}
\begin{itemize}
    \item We demonstrate the probabilistic convergence of AdaGrad with momentum on non-convex smooth optimization under a general noise assumption in \eqref{eq:noise}. For an $L$-smooth function $f$, we demonstrate that after $T$ iterations of the algorithm, with probability at least $1-\delta$, $\sum_{s=1}^T \|\nabla f(\vx_s)\|^2/T$ is bounded by
    \begin{align*}
         \mO\left( \frac{{\rm poly}\left(\log \frac{T}{\delta} \right)}{T} + \sqrt{\frac{(A+C){\rm poly}\left(\log \frac{T}{\delta} \right)}{T} }\right),
    \end{align*}
    which could also accelerate to $\tilde{\mO}(1/T$) rate when the noise parameters $A,C$ are sufficiently low. 
    \item As direct corollaries, we also derive similar probabilistic convergence results of AdaGrad on non-convex smooth optimization with almost surely affine variance noise. More importantly, the convergence rate is optimal and adaptive to the noise level $C$ in \eqref{eq:affine}.
   \item We derive a convergence bound for AdaGrad with momentum considering \eqref{eq:noise} and \eqref{eq:general}. The rate is similar to the smooth case and adaptive to the noise level as well, and necessitating problem-parameters to tune step-sizes. 
\end{itemize}

Our analysis relies on the descent lemma with telescoping, and the novel decomposition and estimations over the first-order term related to new proxy step-sizes that are used to decorrelate stochastic gradients and adaptive step-sizes in this new noise regime. We also prove that the function value gap as well as the gradient norm are controlled by the polynomial of $\log T$ along the optimization process.

The rest of the paper are organized as follows. The next section introduces some extra related works. Section \ref{sec:preliminary} provides the problem setup and basic assumptions, and the introduction of AdaGrad with momentum. Section \ref{sec:convergence} provides high probability convergence bounds for AdaGrad with momentum, and also for AdaGrad as direct corollaries. Section \ref{sec:proof} provides proof details for the main results. Section \ref{sec:general} presents necessary introduction of the generalized smooth condition and the subsequent convergence result. 
All missing proofs for some of the lemmas and convergence results under generalized smoothness are given in Appendix.
\section{Related works}
SGD and its adaptive variants have been a target of intense interest in the last decade. We refer to \citep{bottou2018optimization,ruder2016overview} for an overview. We limit our discussions to the most relevant literature in the sequel.

\paragraph{Convergence of AdaGrad} 
Numerous of works mainly studied the convergence of AdaGrad-Norm over non-convex smooth landscape. \citet{li2019convergence} first proved the convergence for AdaGrad-Norm. However, they studied a variant with a delayed step-size that is independent from the current stochastic gradient and required knowledge of the smoothness parameter for tuning step-sizes. Getting rid of prior-knowledge on problem-parameters, \citet{ward2020adagrad} relied on a novel proxy step-size technique and showed the convergence with an uniform bound of stochastic gradients for vanilla AdaGrad-Norm. \cite{kavis2022high} and \cite{alina2023high} proved probabilistic convergence  under the sub-Gaussian noise without relying on the bounded gradient assumption. In case of the affine variance noise, \citet{faw2022power} provided the convergence bound. However, their rate is adaptive to the noise level only when $B \sim \mO(1/T)$. 
\cite{wang2023convergence} relied on a distinct framework to improve the dependency on $T$ in the convergece rate \citep{faw2022power} and achieved the adaptivity on noise level without any restriction over $B$. Concurrently, \cite{attia2023sgd} deduced a probabilistic bound, using a novel induction argument to control the function value gap. Their result also adapted to the noise level without further requirement on $B$. \citet{alina2023on} formulated a convergence bound for AdaGrad-Norm and its acceleration version in quasar-convex smooth setting. 

The element-wise version of AdaGrad was first studied in \citep{duchi2011adaptive} on online convex optimization. In non-convex smooth case, a line of works investigated AdaGrad with bounded gradients. \citet{zou2019sufficient} explored the convergence of AdaGrad with a heavy-ball or Nesterov style momentum. \cite{zhou2018convergence} also covered AdaGrad in their analysis, but they deduced a bound under requiring bounded gradients' summation.
\citet{defossez2020simple} studied AdaGrad with Adam-type momentum and improved the dependency on the momentum parameter $\beta$ to $\mO((1-\beta)^{-1})$. \cite{shen2023unified} introduced a weighted AdaGrad with unified momentum covering both heavy-ball and Nesterov’s acceleration. Recently, \citet{alina2023high}, a work mentioned before, derived a convergence bound under coordinate-wise sub-Gaussian noise, i.e., $g(\vx)_i - \nabla f(\vx)_i$ is sub-Gaussian for each $i\in [d]$, without requiring bounded gradients.

\paragraph{Convergence with affine variance noise}
We briefly summarize some works on the convergence of SGD or AdaGrad with \eqref{eq:affine} and its variants under the non-convex smooth landscape. \citet{bertsekas2000gradient} provided an almost-surely convergence bound for SGD. In non-asymptotic view, \citet{bottou2018optimization} derived a convergence bound for SGD of the form $\mO(1/T+\sqrt{C/T})$ when step-sizes are well tuned by the smooth parameter, $B$ and $C$. They also pointed out that the extension is immediate from the bounded noise case \citep{ghadimi2013stochastic}.
The convergence of AdaGrad-Norm under \eqref{eq:affine} has been well studied by \citep{faw2022power,wang2023convergence,attia2023sgd} as mentioned before. \citet{faw2023beyond} further extended the analysis considering a generalized smooth condition. Recently, several studies have also examined the convergence of other adaptive methods, such as Adam \citep{hong2024convergence}, under \eqref{eq:affine}.
%\citet{wang2023convergence} obtained expected convergence rate for AdaGrad under a stronger coordinate-wise version of (3):
However, none of these existing works could prove the convergence of coordinate-wise version of AdaGrad under \eqref{eq:affine} or a weaker noise condition.

\paragraph{Convergence with the expected smoothness}
The expected smoothness condition was once applied for convex optimization such that for some constant $A > 0$,
\begin{align}\label{eq:expect_smooth_1}
    \mE [\|g(\vx) - g(\vx^*)\|^2 ] \le A(f(\vx)-f^*),
\end{align}
where $\vx^*$ denotes the global minimizer. Based on \eqref{eq:expect_smooth_1}, \cite{richtarik2020stochastic} relied on matrix analysis to bound the identities of expected iterates of SGD in the setting of stochastic reformulations of linear systems. \citet{gower2021stochastic} applied \eqref{eq:expect_smooth_1} to analyze the JacSketch method (a general form of SAGA) over strongly convex optimization. 

Since $\vx^*$ is ill-defined for non-convex optimization, \cite{gower2019sgd} then directly set $\mE[\|g(\vx^*)\|^2] = B$ and deduced the non-convex version of the expected smoothness in \eqref{eq:expect_smooth}, which aligns with the weak growth condition \citep{vaswani2019fast} when $B=0$. \cite{gower2019sgd} relied on \eqref{eq:expect_smooth} to analyze SGD over quasi-strongly convex optimization.
Independently, \citet{grimmer2019convergence} relied on \eqref{eq:expect_smooth} and developed a general framework for SGD equipped with projection operators
over convex non-smooth functions. \citet{wangjmlr2023convergence} also used \eqref{eq:expect_smooth} to derive a convergence bound for SGD using
bandwidth-based step-sizes.

Regarding the noise model in \eqref{eq:noise}, \citet{poljak1973pseudogradient} provided an asymptotic convergence bound for SGD  with smooth objective functions. Very recently, \cite{khaled2022better} derived a non-asymptotic convergence rate of $\mO(1/\sqrt{T})$ for SGD with non-convex smooth functions.

In conclusion, it's clear to see that \eqref{eq:noise} is weaker than the above conditions including \eqref{eq:affine} (when assuming the existence of $f^*$), \eqref{eq:expect_smooth} and \eqref{eq:expect_smooth_1}\footnote{We make the comparison when assuming all conditions are in almost-surely form.}. Our result then shows that AdaGrad could find a stationary point under this mild noise assumption without prior knowledge of problem-parameters.

\paragraph{Convergence with generalized smoothness}
The generalized smooth condition \citep{zhang2020why} has been well studied under different algorithms, e.g., \citep{qian2021understanding,zhao2021convergence,reisizadeh2023variance,zhang2020improved,crawshaw2022robustness}. Considering AdaGrad and its variants, \cite{faw2023beyond} established a convergence bound for AdaGrad-Norm considering \eqref{eq:affine}. However, their result required $B < 1$. \cite{wang2023convergence} further tightened the dependency to the iteration number $T$ and got rid of restriction on $B$. 
% \paragraph{Contributions } We summarize our main contributions as follows.
% \begin{itemize}
%     \item We provide a high probability convergence bound for AdaGrad-Norm with heavy-ball momentum under ``sub-Gaussian affine" variance noise. The bound is of order $\tilde{\mathcal{O}}(1/T + \sigma_0/ \sqrt{T})$, which aligns with the convergence rate of some SGD variants, including non-adaptive SGD  \citep{bottou2018optimization}  and AdaGrad-Norm  \citep{attia2023sgd}, under the same conditions. 
%     \item We derive a high probability convergence bound for AdaGrad with heavy-ball momentum under coordinate-wise ``sub-Gaussian affine" variance noise. We demonstrate that after $T$ iterations of the algorithm, with probability at least $1-\delta$, it holds that
%     \begin{align*} 
%         \frac{1}{T}\sum_{t=1}^T\|\nabla f(x_t)\|^2 \le \tilde{\mathcal{O}}\left(\frac{1}{T} + \frac{\|\boldsymbol{\sigma}_0\|_{\infty}}{\sqrt{T}} \right). 
%     \end{align*} 
%    \item  We also establish the high probability convergence bound for AdaGrad with Adam-type momentum under coordinate-wise ``sub-Gaussian affine" variance noise. The bound shares the same form to AdaGrad with heavy-ball momentum which is also adaptive to the noise level.
% \end{itemize}

% \input{Related_work}

% \SetAlgoNlRelativeSize{-2}

\section{Problem setting and algorithm}\label{sec:preliminary}

We consider unconstrained stochastic optimization over the Euclidean space $\mR^d$ with $l_2$-norm. The objective function $f: \mR^d \rightarrow \mR$ is $L$-smooth satisfying that for any $\vx, \vy \in \mR^d$,
\begin{align*}
f(\vy) - f(\vx) - \la \nabla f(\vx), \vy-\vx \ra \le \frac{L}{2}\|\vx-\vy\|^2.
\end{align*}
Given $\vx \in \mathbb{R}^d$, we assume a gradient oracle that returns a random vector $ g(\vx,\vz) \in \mathbb{R}^d$, where $\vz $ denotes a random sample. The gradient of $f$ at $\vx$ is denoted by $\nabla f(\vx)  \in \mathbb{R}^d$.
\paragraph{Notations}  
% We use $[T]$ to denote the set $\{1,2,\cdots, T\}$ and $\| \cdot \|, \| \cdot \|_1$ and $\| \cdot \|_{\infty}$ to denote $l_2$-norm, $l_1$-norm and $l_\infty$-norm respectively. $a \sim \mathcal{O}(b)$ and $a \le \mathcal{O}(b)$ denote $a = C_1b$ and $a \le C_2b$ for some positive universal constants $C_1, C_2$ and $a \le \tilde{\mathcal{O}}(b)$ denotes $a \le \mathcal{O}(b)\text{poly}(\log b)$. 
% For any vector $\vx \in \mR^d$, $\vx^2$ and $\sqrt{\vx}$ denote coordinate-wise square and square root respectively. 
% For any two vectors $\vx,\vy \in \mR^d$, we use $\vx \odot \vy$ and $\vx/\vy$ to denote the coordinate-wise product and quotient respectively. ${\bf 0}_d$ and ${\bf 1}_d$ represent zero and one $d$-dimensional vectors respectively.
We denote the set $\{1,2,\cdots, T\}$ as $[T]$, and use $\| \cdot \|, \| \cdot \|_1$, and $\| \cdot \|_{\infty}$ to represent the $l_2$-norm, $l_1$-norm, and $l_\infty$-norm, respectively. The notations $a \sim \mathcal{O}(b)$ and $a \le \mathcal{O}(b)$ refer to $a = c_1b$ and $a \le c_2b$ with $c_1, c_2$ being positive universal constants, and $a \le \tilde{\mathcal{O}}(b)$ indicates $a \le \mathcal{O}(b) \text{poly}(\log b)$. For any vector $\vx \in \mR^d$, the expressions $\vx^2$ and $\sqrt{\vx}$ refer to the coordinate-wise square and square root. For two vectors $\vx,\vy \in \mR^d$, $\vx \odot \vy$ and $\vx/\vy$ denote the coordinate-wise product and quotient. ${\bf 0}_d$ and ${\bf 1}_d$ signify zero and one vectors in $d$ dimensions. Further, we write ${\bf 1}_d/ \vx$ as $1/\vx$, whenever there is no any confusion.

\paragraph{Assumption}
We make the following assumptions.
\begin{itemize}
    \item  \textbf{(A1) Bounded below:} The objective function is bounded below, i.e., there exists $f^* > -\infty$ such that $f(\vx) \ge f^*, \forall \vx \in \mR^d$;
    \item \textbf{(A2) Unbiased estimator:} The gradient oracle provides an unbiased estimator of $\nabla f(\vx)$, i.e., $\forall \vx \in \mR^d$, $\mE_{\vz}\left[  g(\vx,\vz) \right]=\nabla f(\vx)$;
    \item  \textbf{(A3) Relaxed almost surely affine variance noise: }  The gradient oracle satisfies that for some constants $A,B,C > 0$, $\|  g(\vx,\vz)-\nabla f(\vx)\|^2 \le A(f(\vx)-f^*) + B \|\nabla f(\vx)\|^2 + C, a.s., \forall \vx \in \mR^d$.
    % \item \textbf{(A5) Sub-Gaussian noise:} For any $x \in \mR^d$, $\mE\left[\exp\left(\|g(x) - \nabla f(x)\|_{\infty}^2/\sigma^2 \right)\right] \le \exp(1)$.
\end{itemize}

The first two assumptions are standard in the analysis of algorithm's convergence. With a simple calculation, it's easy to verify that Assumption (A3) is equivalent to 
\begin{align*}
     \|g(\vx,\vz)\|^2 \le A'(f(\vx)-f^*) + B' \|\nabla f(\vx)\|^2 + C'
\end{align*}
for another three positive constants $A',B',C'$. Therefore, (A3) is a generalization of the almost surely noise models in \eqref{eq:affine} and \eqref{eq:expect_smooth}. For more detailed examples of stochastic gradient settings satisfying Assumption (A3), we refer interested readers to see \citep[Proposition 2,3]{khaled2022better}.
%Under Assumption (A3), we can obtain high probability convergence, while under Assumption (A3'), we can get expected convergence for the studied algorithm. 
\begin{algorithm}[H]
\caption{AdaGrad with momentum}
\label{alg:AdaGrad}
\begin{algorithmic}
    \STATE{ \textbf{Input: }Horizon $T$, $\vx_1 \in \mathbb{R}^d$, $\beta\in [0,1)$, $\vm_0 = \vv_0 = {\bf 0}_d$, $\eta,\ep > 0$, $\vep = \ep {\bf 1}_d$}
    \FOR{$s=1,\cdots,T$}
    \STATE{Draw a random sample $\vz_s$ and generate $\vg_s =   g(\vx_s,\vz_s) $;}
    \STATE{$\vv_{s} = \vv_{s-1} + \vg_{s}^2 $;}
    \STATE{$\vm_{s}=\beta \vm_{s-1} - \eta \vg_{s}/\left(\sqrt{\vv_s}+\vep \right)$;}
    \STATE{$\vx_{s+1} = \vx_{s} + \vm_s$; }
    \ENDFOR
    \end{algorithmic}
\end{algorithm}

\paragraph{AdaGrad with momentum} Throughout the paper, we study AdaGrad with momentum given in Algorithm \ref{alg:AdaGrad}. 
We can transform Algorithm \ref{alg:AdaGrad} into the classical Polyak's heavy-ball method \citep{polyak1964some} with an adaptive step-size:
\begin{equation}\label{eq:x_iterate}
    \vx_{s+1} = \vx_s - \eta \frac{\vg_{s}}{\sqrt{\vv_s}+\vep} + \beta(\vx_{s} - \vx_{s-1}),\ \  \forall s \in [T],
\end{equation}
where we set $\vx_0 = \vx_1$.
\paragraph{AdaGrad} AdaGrad is  Algorithm \ref{alg:AdaGrad}   with  $\beta = 0$.
\section{Main Convergence Result}\label{sec:convergence}
In this section, we provide the probabilistic convergence result for Algorithm \ref{alg:AdaGrad} under Assumption (A3) and smooth objective functions.
\begin{theorem}\label{thm:1}
    Given $T \ge 1$, let $\{\vx_s\}_{s \in [T]}$ be generated by Algorithm \ref{alg:AdaGrad}. If Assumptions (A1), (A2), (A3) hold, then for any $\beta \in [0,1),\eta,\ep > 0$ and $\delta \in (0,1)$, it holds that with probability at least $1-\delta$,
    \begin{align*}
       &\frac{1}{T}\sum_{s=1}^T\|\nabla f(\vx_s)\|^2 \\
       \le &\mO\left[\tdel_1\left(\frac{B_1\tdel_1+ \sqrt{B_1L\del}+\ep}{T}  + \sqrt{\frac{A\del+C}{T}} \right)\right],
    \end{align*}
    where $B_1=B+1,$ $\tdel_1 = \tdel(1-\beta)/\eta$, and $\tdel$ is given by\footnote{The detailed expression of $\tdel$ could be found in \eqref{eq:define_tD}.}
    \begin{align*}
        \tdel \sim &\mO\left[f(\vx_1)-f^*+ \frac{\sqrt{C}\eta d}{1-\beta}\log\left(\frac{T}{\delta} + \frac{T}{\ep^2}\right)\right.\\
        &\left.+ \frac{(A+B_1L)\eta^2d^2}{(1-\beta)^3}\log^2\left(\frac{T}{\delta}+ \frac{T}{\ep^2}\right) \right].
    \end{align*}
\end{theorem}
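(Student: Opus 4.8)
The plan is to follow the standard descent-lemma-plus-telescoping recipe for AdaGrad-type analyses, but with the adaptive step-size carefully decorrelated from the stochastic gradient through an auxiliary ``proxy'' step-size. First I would apply the $L$-smoothness of $f$ along the Polyak heavy-ball iterate \eqref{eq:x_iterate}, obtaining a descent inequality of the form $f(\vx_{s+1}) \le f(\vx_s) - \eta \la \nabla f(\vx_s), \vg_s/(\sqrt{\vv_s}+\vep)\ra + (\text{momentum cross terms}) + \tfrac{L}{2}\|\vx_{s+1}-\vx_s\|^2$. After summing over $s \in [T]$ and rearranging, the controlling quantity is the first-order term $\sum_s \eta \la \nabla f(\vx_s), \vg_s/(\sqrt{\vv_s}+\vep)\ra$, while the second-order terms $\sum_s \|\vm_s\|^2$ can be bounded by $\sum_s \eta^2 \|\vg_s\|^2/(\sqrt{\vv_s}+\vep)^2$ (up to a $(1-\beta)$-dependent factor coming from unrolling the momentum recursion), which in turn telescopes coordinate-wise into $\sum_i \log(v_{T,i}/\ep^2 + 1)$-type logarithmic quantities via the classical AdaGrad ``$\sum a_s/(b+\sum_{r\le s}a_r) \le \log(\dots)$'' lemma.

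The core difficulty — and the step I expect to be the main obstacle — is handling the first-order term under the \emph{relaxed} noise \textbf{(A3)}: because $\vv_s$ depends on $\vg_s$, the ratio $\vg_s/(\sqrt{\vv_s}+\vep)$ is not conditionally unbiased, and moreover the noise magnitude is now controlled by $f(\vx_s)-f^*$ (not just by $\|\nabla f(\vx_s)\|$ or a constant), so the usual affine-variance tricks do not immediately close the recursion. The plan here is to introduce a proxy step-size $\tilde v_{s,i}$ (e.g.\ replacing $g_{s,i}^2$ by its conditional expectation or by $v_{s-1,i}$ plus a deterministic surrogate) so that $\nabla f(\vx_s)/(\sqrt{\tvv_s}+\vep)$ is $\mF_{s-1}$-measurable and the martingale-difference part $\la \nabla f(\vx_s), \vg_s/(\sqrt{\tvv_s}+\vep) - \nabla f(\vx_s)/(\sqrt{\tvv_s}+\vep)\ra$ has conditional variance amenable to a Freedman-type concentration inequality. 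The gap between the true and proxy step-sizes, $\la \nabla f(\vx_s), \vg_s(1/(\sqrt{\vv_s}+\vep) - 1/(\sqrt{\tvv_s}+\vep))\ra$, is then bounded coordinate-wise using $|\sqrt{v_{s,i}}-\sqrt{\tilde v_{s,i}}| \le |g_{s,i}|$-type estimates, producing extra terms of order $\sum_s \|\vg_s\|^2/(\sqrt{\vv_s}+\vep)^2$ again (hence logarithmic) plus lower-order pieces.

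After these reductions, the descent sum reads schematically $\sum_s \|\nabla f(\vx_s)\|^2/(\sqrt{\tvv_s}+\vep) \lesssim \tfrac{f(\vx_1)-f^*}{\eta/(1-\beta)} + (\text{log-type noise terms}) + (\text{martingale fluctuation})$, where the martingale term is controlled with probability $1-\delta$ by $\sqrt{\log(1/\delta)}$ times the square root of the sum of conditional variances — which itself feeds back into $\sum_s \|\nabla f(\vx_s)\|^2/(\sqrt{\tvv_s}+\vep)$ and into $f(\vx_s)-f^*$. This circularity is resolved by a bootstrapping/induction argument (in the spirit of \citet{attia2023sgd}): one posits the a priori bound $f(\vx_s)-f^* \le \tdel$ and $\sum_{r\le s}\|\nabla f(\vx_r)\|^2/(\sqrt{\tilde v_{r}}+\vep) \le (\text{poly-log})$ for all $s$, uses it to control the noise and fluctuation terms, and then verifies via the descent inequality that the same bound propagates to $s+1$, with $\tdel$ solving the resulting self-consistent inequality — this is precisely where the stated form of $\tdel$, with its $\sqrt{C}\eta d\log(T/\delta)$ and $(A+B_1L)\eta^2 d^2 \log^2(T/\delta)$ terms, emerges. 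Finally, I would convert the bound on $\sum_s \|\nabla f(\vx_s)\|^2/(\sqrt{\tilde v_s}+\vep)$ into a bound on $\tfrac1T\sum_s\|\nabla f(\vx_s)\|^2$ by Cauchy–Schwarz together with the worst-case bound $\sqrt{\tilde v_{s,i}}+\ep \lesssim \ep + \sqrt{T}\cdot(\text{poly-log})$ (since each $g_{s,i}^2$ is itself controlled by the a priori bounds), which produces the $\tdel_1(\dots)$ outer factor and the two-regime $1/T$ versus $\sqrt{(A\del+C)/T}$ rate in the statement.
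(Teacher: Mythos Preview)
Your overall strategy --- proxy step-size to decorrelate $\vg_s$ from the adaptive denominator, a martingale concentration bound for the decorrelated part, and an induction on $f(\vx_s)-f^*\le\tdel$ to close the recursion under \textbf{(A3)} --- matches the paper's architecture. Two points, however, diverge from the paper and the second is a genuine gap.

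\textbf{Handling of momentum.} You apply the descent lemma directly to $\vx_s$ and speak of ``momentum cross terms''. The paper instead introduces the auxiliary sequence $\vy_s=\vx_s+\tfrac{\beta}{1-\beta}(\vx_s-\vx_{s-1})$, which satisfies $\vy_{s+1}-\vy_s=-\tfrac{\eta}{1-\beta}\vg_s/\vb_s$; the descent lemma on $\vy_s$ then produces exactly the first-order term $-\la\nabla f(\vy_s),\vg_s/\vb_s\ra$ with no cross terms at all. The residual $\la\nabla f(\vx_s)-\nabla f(\vy_s),\vg_s/\vb_s\ra$ is bounded by smoothness and $\|\vy_s-\vx_s\|\le\tfrac{\beta}{1-\beta}\|\vm_{s-1}\|$, which falls into the second-order budget. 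Your direct approach leaves the term $\beta\la\nabla f(\vx_s),\vm_{s-1}\ra$ unexplained; it is first order, and naively bounding it by Young's inequality produces an unweighted $\|\nabla f(\vx_s)\|^2$ that cannot be absorbed into the weighted negative term $\|\bar\vg_s/\sqrt{\va_s}\|^2$.

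\textbf{The final conversion.} This is where your proposal would fail to deliver the stated bound. You propose to pass from $\sum_s\|\bar\vg_s\|^2/\|\tva_s\|_\infty\le\tdel_1$ to $\tfrac1T\sum_s\|\bar\vg_s\|^2$ via ``the worst-case bound $\sqrt{\tilde v_{s,i}}+\ep\lesssim\ep+\sqrt{T}\cdot(\text{poly-log})$''. Using the a priori bounds $\delx_s\le\tdel$ and $\|\bar\vg_s\|^2\le 2L\tdel$ to control each $g_{s,i}^2$ gives $\|\tva_s\|_\infty\lesssim\ep+\sqrt{T\bigl((A+B_1L)\tdel+C\bigr)}$, hence $\tfrac1T\sum_s\|\bar\vg_s\|^2\lesssim\tdel_1\bigl(\tfrac{\ep}{T}+\sqrt{(A\tdel+B_1L\tdel+C)/T}\bigr)$. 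When $A=C=0$ this is still of order $1/\sqrt{T}$, not the $1/T$ claimed in the theorem. The paper's move is sharper: it bounds $\|\va_s\|_\infty-\ep\le\sqrt{2B_1\sum_{j}\|\bar\vg_j\|^2+2(A\tdel+C)T+\mG^2}$, i.e.\ \emph{self-referentially} in terms of the unknown $S:=\sum_s\|\bar\vg_s\|^2$, and then solves the resulting inequality $S\le\tdel_1\bigl(\sqrt{2B_1S}+\sqrt{2(A\tdel+C)T}+\mG+\ep\bigr)$ via Young's inequality on $\tdel_1\sqrt{2B_1S}\le\tfrac12 S+B_1\tdel_1^2$. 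This is precisely what separates the $B_1\tdel_1^2/T$ contribution from the $\sqrt{(A\tdel+C)/T}$ one and yields the noise-adaptive two-regime rate.
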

\begin{remark}\label{rem:beta}
With a simple calculation, when $\eta = c_1(1-\beta)^{3/2}$ for some constant $c_1 > 0$, the above upper bound has a minimum order of $\mO((1-\beta)^{-1})$ with respect to $(1-\beta)^{-1}$. The comparison of existing results with our convergence bound could be found in Table \ref{table} from the appendix.
\end{remark}

\paragraph{Convergence of AdaGrad with affine variance noise}
As a direct consequence of Theorem \ref{thm:1}, it's worthy to mention the following convergence bound for AdaGrad with affine variance noise considering their empirical significance.
\begin{corollary}
	Under the assumptions and notations of Theorem \ref{thm:1}, let  $\beta = 0$ and $A=0$.
    Then for any $\eta,\ep > 0$ and $\delta \in (0,1)$, it holds that with probability at least $1-\delta$,
    \begin{align*}
       &\frac{1}{T}\sum_{s=1}^T\|\nabla f(\vx_s)\|^2 \\
       \le &\mO\left[\tdel_1\left(\frac{B_1\tdel_1 + \sqrt{B_1L\del}+\ep}{T} + \sqrt{\frac{C}{T}} \right)\right],
    \end{align*}
    where $B_1=B+1,$ $\tdel_1 = \tdel/\eta$, and $\tdel$ is defined as follows
    \begin{align*}
        \tdel \sim &\mO\left[f(\vx_1)-f^*+ {\sqrt{C}\eta d}\log\left(\frac{T}{\delta}+\frac{T}{\ep^2} \right)\right.\\
       &\left.+ {B_1L\eta^2d^2}\log^2\left(\frac{T}{\delta}+\frac{T}{\ep^2} \right) \right].
    \end{align*}
\end{corollary}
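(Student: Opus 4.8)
The plan is to obtain the corollary directly by specializing Theorem~\ref{thm:1} with the choices $\beta = 0$ and $A = 0$, and then checking that every $\beta$- and $A$-dependent quantity collapses to the claimed expression. Concretely, I would proceed as follows. First, substitute $\beta = 0$ into the definitions: $\tdel_1 = \tdel(1-\beta)/\eta$ becomes $\tdel/\eta$, and the prefactor $1/(1-\beta)$, $1/(1-\beta)^3$ in the bound for $\tdel$ all equal $1$. Second, substitute $A = 0$: the term $\sqrt{A\del + C}$ in the main bound becomes $\sqrt{C}$, the additive $A\del$ inside the square root disappears, and the coefficient $(A + B_1 L)$ in the definition of $\tdel$ reduces to $B_1 L$. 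Collecting these substitutions reproduces exactly the displayed bound and the displayed expression for $\tdel$, so the corollary follows.

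The one point that needs a line of justification is that the substitution $A = 0$ is legitimate even though Assumption (A3) as stated requires $A > 0$ (strictly). The clean way around this is to note that (A3) with $A = 0$ is a \emph{stronger} hypothesis than (A3) with any $A > 0$: if $\|g(\vx,\vz) - \nabla f(\vx)\|^2 \le B\|\nabla f(\vx)\|^2 + C$ almost surely, then for every $A > 0$ we also have $\|g(\vx,\vz) - \nabla f(\vx)\|^2 \le A(f(\vx) - f^*) + B\|\nabla f(\vx)\|^2 + C$ since $f(\vx) \ge f^*$ by (A1). Hence Theorem~\ref{thm:1} applies with any fixed $A > 0$, giving a valid high-probability bound that still contains the extraneous $A$. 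Then I would take the infimum over $A > 0$ (or simply let $A \downarrow 0$ inside the $\mO(\cdot)$ expressions, which is harmless because all occurrences of $A$ appear with nonnegative coefficients and the event on which the bound holds can be taken independent of $A$, being the event that controls the noise and stochastic-gradient terms). This yields the stated bound with the $A$-terms absent.

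The expected main obstacle is essentially bookkeeping rather than a genuine difficulty: one must make sure that the limit $A \downarrow 0$ commutes with the probabilistic statement, i.e.\ that the $(1-\delta)$-probability ``good event'' in the proof of Theorem~\ref{thm:1} does not itself shrink to null measure as $A \to 0$. Inspecting the construction of $\tdel$, the $A$ only enters through the magnitude bounds on the function-value gap and the gradient norm along the trajectory; since these bounds are monotone and finite at $A = 0$, the good event for $A = 0$ can be taken to be (a superset of) the one for any $A > 0$, so the probability guarantee is preserved. No additional estimation is required beyond this observation.

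Alternatively — and more robustly — one can simply rerun the proof of Theorem~\ref{thm:1} verbatim with $A = 0$ and $\beta = 0$ throughout; every step that invoked $A(f(\vx)-f^*)$ still goes through with that term deleted, and every step that invoked a factor $(1-\beta)^{-k}$ becomes trivial. This avoids any limiting argument altogether and is the approach I would present if one prefers a self-contained derivation, deferring the routine verification to Appendix.
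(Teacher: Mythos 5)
Your proposal is correct and matches the paper's (implicit) approach: the corollary is simply Theorem~\ref{thm:1} specialized to $\beta = 0$ and $A = 0$, and the paper offers no separate proof. Your second suggested route --- rerunning the proof of Theorem~\ref{thm:1} verbatim with $A = 0$, which is legitimate since $A$ enters only through nonnegative coefficients in $X$, $\mG_s$, and $\tdel$ --- is the cleanest way to dispatch the $A>0$ technicality in Assumption (A3) and avoids any limiting argument over the probability space.
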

\begin{remark}\label{rem:highpro}
	1) Setting $\eta\sim {1 / \left(d \log\left(\frac{T}{\delta} \right)\right)}$, then $\del \sim 1$, and the above derived upper bound is of order $\mO \left( {d^2 } \log^2\left(\frac{T}{\delta}\right) /T + d \log\left(\frac{T}{\delta}\right)\sqrt{C/T}\right)$, matching the lower rate in \citep{arjevani2023lower} up to logarithm factors.
	\\
	2)  The convergence rate is of order $\tilde{\mO}({1/ T} + \sqrt{{C / T}}),$
	and when the noise level $C$ is sufficiently low, the convergence rate could be $\tilde{\mO}(1/T)$, which aligns with the result for non-adaptive SGD under the same conditions \citep{ghadimi2013stochastic,bottou2018optimization} up to logarithmic terms.  \\
	3)  As in standard probability theory, the derived high-probability convergence can ensure expected convergence. \\
4) Assumption (A3) can be replaced by its sub-Gaussian form where
$\mE_{\vz}\left[\exp\left({\|  g(\vx,\vz)-\nabla f(\vx)\|^2 \over A(f(\vx)-f^*) + B \|\nabla f(\vx)\|^2 + C}\right)\right]\le \mathrm{e} ,$ and our results still hold true, as shown in Appendix.
% 5) Assumption (A3) can be replaced with its expected version where $\mE_{\vz}\left[\|  g(\vx,\vz)-\nabla f(\vx)\|^2\right] \le A(f(\vx)-f^*) + B \|\nabla f(\vx)\|^2 + C$, as will be shown in Appendix, where our results lose the adaptivity on the noise parameters $C$ and $A$.
\end{remark}

\section{Proof detail}\label{sec:proof}
To start with, we let $\vg_s = (\gsi)_i$ be as in Algorithm \ref{alg:AdaGrad} and let $\nabla f(\vx_s) = \bar{\vg}_s = (\bar{g}_{s,i})_i $, $\vxi_s = (\xi_{s,i})_i = \vg_s - \bar{\vg}_s$ and $\delx_s = f(\vx_s)-f^*$. 

During the proof, we will introduce several key lemmas to deduce the final results. All the missing proofs could be found in Appendix.
\subsection{Preliminary} Before proving the main result, we shall introduce several useful auxiliary sequences. The first sequence $\{\vy_s \}_{ s \ge 1}$ is defined as
\begin{align}
	& \vy_1 = \vx_1, \vy_s = \frac{\beta}{1-\beta}(\vx_s-\vx_{s-1}) + \vx_s, \quad\forall s \ge 2, \label{eq:define_y_s}
\end{align}
following from \citep{ghadimi2015global,yang2016unified} which was used to prove the convergence of SGD with momentum and later applied to handle with many variants of momentum-based algorithms. When $\vx_s$ is generated by Algorithm \ref{alg:AdaGrad}, we reveal that $\vy_s$ satisfies that for any $s \ge 1$,
\begin{align}\label{eq:y_iterative}
    \vy_{s+1} = \vy_s -\frac{\eta}{1-\beta}  \frac{\vg_s}{\vb_s},\quad \vb_s = \sqrt{\vv_s} + \vep.
\end{align}
We let the function value gap $\dely_s = f(\vy_s)-f^*$.
In addition, we introduce $\{  \mG_s \}_{s \ge 1}$ and the value $\mG$,
\begin{equation}\label{eq:define_G_t}
    \begin{split}
     &\mG_s  =   \sqrt{X\delx_s+ 2C },  \\
     &\mG =  \sqrt{X\del+ 2C },\quad  X = 2A+4LB+4L,
    \end{split}
\end{equation}
where $\del$ is as in Theorem \ref{thm:1}.

\subsection{Rough estimations}
Motivated by \citep{faw2022power}, we provide some rough estimations for several key algorithm-dependent terms in this section. These estimations are not delicate, but they play vital roles in further deducing the final convergence rate. 
\begin{lemma}\label{lem:estimation_rough}
    For any $s \ge 1$ and $\beta \in [0,1)$,
    \begin{align*}
        \|\vm_s \| \le \frac{\eta\sqrt{d}}{1-\beta},\quad \|\bar{\vg}_s \|  \le \|\bar{\vg}_1\| + \frac{L\eta s\sqrt{d}}{1-\beta}.
    \end{align*}
\end{lemma}

\begin{lemma}\label{lem:delta_rough}
    Suppose that $\beta \in [0,1)$. Then for any $T \ge 1$,
    \begin{align*}
        \sum_{t=1}^T &\delx_t \le \delx_1  T\\
    & + \left(\frac{\eta \|\bar{\vg}_1\| \sqrt{d}}{1-\beta} + \frac{L\eta^2 d}{2(1-\beta)^2}\right) T^2 + \frac{L\eta^2dT^3}{(1-\beta)^2}  .
    \end{align*}
\end{lemma}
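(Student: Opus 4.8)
The goal is to prove Lemma~\ref{lem:delta_rough}: a crude polynomial-in-$T$ upper bound on $\sum_{t=1}^T \delx_t = \sum_{t=1}^T (f(\vx_t)-f^*)$. The plan is to control how fast $f(\vx_t)$ can grow along the trajectory using only deterministic facts: $L$-smoothness of $f$, the a priori bound $\|\vm_s\| \le \eta\sqrt{d}/(1-\beta)$ from Lemma~\ref{lem:estimation_rough}, and the linear-in-$s$ growth bound $\|\bar{\vg}_s\| \le \|\bar{\vg}_1\| + L\eta s\sqrt{d}/(1-\beta)$ also from Lemma~\ref{lem:estimation_rough}. The key point is that none of these require any probabilistic reasoning or the noise assumption, so the bound is pathwise.

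\medskip

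First I would write $f(\vx_{t+1}) - f(\vx_t) = f(\vx_t + \vm_t) - f(\vx_t) \le \la \nabla f(\vx_t), \vm_t \ra + \frac{L}{2}\|\vm_t\|^2 \le \|\bar{\vg}_t\|\,\|\vm_t\| + \frac{L}{2}\|\vm_t\|^2$ by Cauchy--Schwarz and $L$-smoothness. Then I would substitute the bounds from Lemma~\ref{lem:estimation_rough}: $\|\vm_t\| \le \eta\sqrt{d}/(1-\beta)$ and $\|\bar{\vg}_t\| \le \|\bar{\vg}_1\| + L\eta t\sqrt{d}/(1-\beta)$, giving
\begin{align*}
f(\vx_{t+1}) - f(\vx_t) \le \frac{\eta\sqrt{d}}{1-\beta}\left(\|\bar{\vg}_1\| + \frac{L\eta t \sqrt{d}}{1-\beta}\right) + \frac{L\eta^2 d}{2(1-\beta)^2} = \frac{\eta\|\bar{\vg}_1\|\sqrt{d}}{1-\beta} + \frac{L\eta^2 d}{2(1-\beta)^2} + \frac{L\eta^2 d\, t}{(1-\beta)^2}.
\end{align*}
Summing this telescoping inequality from $t=1$ to $t = k-1$ yields $f(\vx_k) - f(\vx_1) \le \left(\frac{\eta\|\bar{\vg}_1\|\sqrt{d}}{1-\beta} + \frac{L\eta^2 d}{2(1-\beta)^2}\right)(k-1) + \frac{L\eta^2 d}{(1-\beta)^2}\sum_{t=1}^{k-1} t$, hence $\delx_k = f(\vx_k) - f^* \le \delx_1 + \left(\frac{\eta\|\bar{\vg}_1\|\sqrt{d}}{1-\beta} + \frac{L\eta^2 d}{2(1-\beta)^2}\right)(k-1) + \frac{L\eta^2 d}{2(1-\beta)^2}k(k-1)$, using $\sum_{t=1}^{k-1} t = k(k-1)/2$.

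\medskip

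Finally I would sum over $k = 1, \dots, T$. The constant term contributes $\delx_1 T$; the term linear in $k-1$ contributes at most $\left(\frac{\eta\|\bar{\vg}_1\|\sqrt{d}}{1-\beta} + \frac{L\eta^2 d}{2(1-\beta)^2}\right)\sum_{k=1}^T (k-1) \le \left(\frac{\eta\|\bar{\vg}_1\|\sqrt{d}}{1-\beta} + \frac{L\eta^2 d}{2(1-\beta)^2}\right)\frac{T^2}{2}$; and the quadratic term contributes $\frac{L\eta^2 d}{2(1-\beta)^2}\sum_{k=1}^T k(k-1) \le \frac{L\eta^2 d}{2(1-\beta)^2}\cdot\frac{T^3}{3} \le \frac{L\eta^2 d T^3}{(1-\beta)^2}$. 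Collecting these (and absorbing the $\tfrac12$ factors and noting $\|\bar{\vg}_1\|\sqrt d/(2(1-\beta)) \le \eta\|\bar{\vg}_1\|\sqrt d/(1-\beta)$ is already in the right form up to constants, with the $L\eta^2 d/(4(1-\beta)^2)$ piece absorbed into the $L\eta^2 d/(2(1-\beta)^2)$ coefficient) gives exactly the claimed bound $\sum_{t=1}^T \delx_t \le \delx_1 T + \left(\frac{\eta\|\bar{\vg}_1\|\sqrt d}{1-\beta} + \frac{L\eta^2 d}{2(1-\beta)^2}\right)T^2 + \frac{L\eta^2 d T^3}{(1-\beta)^2}$.

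\medskip

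There is no serious obstacle here: the lemma is deliberately a ``rough estimation'' and everything is deterministic once Lemma~\ref{lem:estimation_rough} is in hand. The only mild care needed is bookkeeping the arithmetic-series sums ($\sum k \sim T^2/2$, $\sum k(k-1) \sim T^3/3$) and checking that the loose constants indeed fit under the stated coefficients; in particular the $T^3$ coefficient $L\eta^2 d/(1-\beta)^2$ is comfortably larger than the $L\eta^2 d/(6(1-\beta)^2)$ that the exact sum produces, so the stated inequality holds with room to spare.
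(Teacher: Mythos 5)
Your proof is correct and follows essentially the same route as the paper: apply the descent lemma to $f(\vx_{t+1})-f(\vx_t)$, bound $\la\bar{\vg}_t,\vm_t\ra$ and $\frac{L}{2}\|\vm_t\|^2$ via Cauchy--Schwarz and Lemma~\ref{lem:estimation_rough}, telescope to get a bound on $\delx_k$, and sum over $k$. The paper phrases the final step as summing the inequality for $\delx_{t+1}$ over $t\in\{0,\dots,T-1\}$ rather than summing a closed form for $\delx_k$, but this is cosmetic; your arithmetic-series estimates comfortably fit within the stated constants, as you observe.
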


\subsection{Start Point and decomposition}
We now proceed the proof for the main result. 
We fix the horizon $T$. Following \citep{ward2020adagrad}, we start from the descent lemma of smoothness over $\vy_s$ with both sides subtracting with $f^*$,
\begin{align*}
    \dely_{s+1} \le \dely_s   +   \left\la \nabla f(\vy_s) ,\vy_{s+1} - \vy_s \right\ra  + \frac{L}{2}\|\vy_{s+1}-\vy_s\|^2.
\end{align*}
Combining with \eqref{eq:y_iterative}, and summing over $s \in [t]$,
\begin{align}
    \dely_{t+1}
               &\le \delx_1 +\frac{\eta}{1-\beta}  \underbrace{\left(- \sum_{s=1}^t  \left\la \nabla f(\vy_s), \frac{\vg_s}{\vb_s} \right\ra \right)}_{\textbf{A}} \nonumber \\
               &\quad+ \frac{L\eta^2}{2(1-\beta)^2}\sum_{s=1}^t\left\|   \frac{\vg_s}{\vb_s}  \right\|^2,  \label{eq:A+B}
\end{align}
where we apply $\vy_1= \vx_1$.
We subsequently further make a decomposition over {\bf A} as
\begin{align}\label{eq:A_decomp}
    \textbf{A} 
    &=  \underbrace{  -\sum_{s=1}^t\left\langle \bar{\vg}_s, \frac{\vg_s}{\vb_s}\right\rangle }_{\textbf{A.1}} + \underbrace{ \sum_{s=1}^t \left\langle \bar{\vg}_s - \nabla f(\vy_s), \frac{\vg_s}{\vb_s}\right\rangle}_{\textbf{A.2}}.
\end{align}

\subsection{Estimating {\bf A}}
The first main challenge comes from the entanglement of $\vg_s$ and $\vb_s$ emerging in {\bf A}, which is a key problem distinct  from the analysis for SGD.

\paragraph{Estimating {\bf A.1}}We adopt the so-called proxy step-size technique which is a commonly used technique for breaking the correlation of $\vb_s$ and $\vg_s$ in the analysis of adaptive methods. This technique relies on introducing appropriate proxy step-sizes. It has been first introduced in \citep{ward2020adagrad} for AdaGrad-Norm with bounded stochastic gradients and variants of proxy step-sizes have been developed in the related literature, e.g., \citep{defossez2020simple,faw2022power,attia2023sgd,alina2023high}. However, none of these proxy step-sizes could be potentially applied for AdaGrad with potential unbounded gradients under the mild noise model in Assumption (A3).

We thus provide a construction of proxy step-sizes that is general enough to handle with Assumption (A3). The proxy step-sizes rely on $ \mG_s$ given in \eqref{eq:define_G_t}, specifically defined in terms of 
\begin{align}\label{eq:proxy_stepsize}
    \va_s = \sqrt{  \vv_{s-1}+ \left( \mG_s{\bf 1}_d\right)^2} + \vep, \quad \forall s \in [T].
\end{align}
Based on the proxy step-size $\eta/\va_s$, we further have
\begin{align}
    &{\bf A.1} =  - \sum_{s=1}^t  \left\|\frac{\bar{\vg}_s}{\sqrt{\va_s}}\right\|^2 \nonumber \\
    &\quad\quad\underbrace{-  \sum_{s=1}^t \left\la  \bar{\vg}_s, \frac{\vxi_s}{\va_s} \right\ra }_{{\bf A.1.1}} + \underbrace{\sum_{s=1}^t  \left\la \bar{\vg}_s,  \left( \frac{1}{\va_s} - \frac{1}{\vb_s} \right)\vg_s 
        \right\ra}_{\textbf{A.1.2}}.  \label{eq:A.1.12}
\end{align}
The estimation for {\bf A.1.1} relies on a probabilistic analysis over a summation of martingale difference sequence.
\begin{lemma}\label{lem:1_bounded}
    Given $T \ge 1$ and $\delta \in (0,1)$, if Assumptions (A2) and (A3) hold, then with probability at least $1-\delta$, 
    \begin{align}
        {\bf A.1.1} \le \frac{1}{4}\sum_{s=1}^t\frac{ \mG_s}{\mG}\left\|\frac{ \bar{\vg}_s  }{\sqrt{\va_s}}\right\|^2 + 3 \mG  \log \left(\frac{T}{\delta} \right),\forall t \in [T], \label{eq:A.1.1}
    \end{align}
    where $\mG_s,\mG$ are as in \eqref{eq:define_G_t}. 
\end{lemma}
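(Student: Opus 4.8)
The quantity $\mathbf{A.1.1} = -\sum_{s=1}^t \langle \bar{\vg}_s, \vxi_s/\va_s\rangle$ is a sum of martingale differences, since $\va_s$ depends only on $\vv_{s-1}$ and on $\mG_s = \sqrt{X\delx_s + 2C}$ — both $\mathcal{F}_{s-1}$-measurable — while $\mE[\vxi_s \mid \mathcal{F}_{s-1}] = 0$ by (A2). So $D_s := -\langle \bar{\vg}_s, \vxi_s/\va_s\rangle$ satisfies $\mE[D_s\mid\mathcal{F}_{s-1}]=0$. The plan is to apply a Freedman-type / Bernstein-type concentration inequality for martingales with sub-exponential-like increments, balancing the deviation term against the conditional second moment so that the latter gets absorbed into the "good" term $\tfrac14\sum (\mG_s/\mG)\|\bar\vg_s/\sqrt{\va_s}\|^2$.

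First I would bound $|D_s|$ and $\mE[D_s^2\mid\mathcal{F}_{s-1}]$ in terms of the right-hand-side pieces. Write $D_s = -\sum_i \bar g_{s,i}\xi_{s,i}/a_{s,i}$. Using Cauchy–Schwarz and the fact that $a_{s,i} \ge \mathcal{G}_s$ (since $a_{s,i} = \sqrt{v_{s-1,i} + \mathcal{G}_s^2} + \epsilon \ge \mathcal{G}_s$), one gets $|D_s| \le \|\bar\vg_s/\sqrt{\va_s}\| \cdot \|\vxi_s/\sqrt{\va_s}\| \le \|\bar\vg_s/\sqrt{\va_s}\|\cdot \|\vxi_s\|/\sqrt{\mathcal{G}_s}$. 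Now (A3) gives $\|\vxi_s\|^2 \le A\delx_s + B\|\bar\vg_s\|^2 + C$, and I want to compare this with $\mathcal{G}_s^2 = X\delx_s + 2C$ where $X = 2A + 4LB + 4L$; the extra $4L(B+1)\delx_s + C$ of slack is exactly what will let me dominate terms like $\|\bar\vg_s\|^2$ after invoking smoothness (recall $\|\bar\vg_s\|^2 \le 2L\delx_s$ by the standard consequence of $L$-smoothness and (A1)). Thus $\|\vxi_s\|^2 \le A\delx_s + 2LB\delx_s + C \le \mathcal{G}_s^2$, so $|D_s| \le \|\bar\vg_s/\sqrt{\va_s}\|$, i.e. the increment is bounded by the square root of a term appearing in the good sum. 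Similarly $\mE[D_s^2\mid\mathcal{F}_{s-1}] \le \|\bar\vg_s/\sqrt{\va_s}\|^2 \cdot \mE[\|\vxi_s\|^2\mid\mathcal{F}_{s-1}]/\mathcal{G}_s^2 \le \|\bar\vg_s/\sqrt{\va_s}\|^2$.

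Then I would invoke a martingale concentration bound of the following shape: if $|D_s| \le R_s$ and $\mE[D_s^2\mid\mathcal{F}_{s-1}] \le \sigma_s^2$ with $R_s,\sigma_s$ predictable, then with probability $1-\delta$, for all $t\in[T]$ simultaneously (via a union bound over $t$, or a time-uniform version), $\sum_{s\le t} D_s \le \lambda \sum_{s\le t}\sigma_s^2 + \tfrac{1}{\lambda}\log(T/\delta) + (\text{term involving }\max_s R_s)$ — the precise form being the one used in \citep{faw2022power} or a Bennett/Freedman inequality. Here I would choose the free parameter $\lambda$ proportional to $1/\mathcal{G}$ (say $\lambda = \tfrac{1}{4\mathcal{G}}$ up to constants), and use $\sigma_s^2 \le \|\bar\vg_s/\sqrt{\va_s}\|^2 \le (\mathcal{G}_s/\mathcal{G})\|\bar\vg_s/\sqrt{\va_s}\|^2 \cdot (\mathcal{G}/\mathcal{G}_s)$ — actually more carefully, since $\mathcal{G}_s \le \mathcal{G}$ is NOT generally true (it depends on whether $\delx_s \le \del$), I need to be careful: the point is to produce the weight $\mG_s/\mG$ in the good term, which suggests the concentration bound should be applied with the $s$-dependent scaling $\lambda_s \sim \mathcal{G}_s/\mathcal{G}^2$ or the inequality should be of a self-normalized flavor. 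I expect the main obstacle to be precisely this bookkeeping: getting the coefficient $\mathcal{G}_s/\mathcal{G}$ (not just a constant) in front of $\|\bar\vg_s/\sqrt{\va_s}\|^2$ while keeping the additive term at the clean level $3\mathcal{G}\log(T/\delta)$, which forces the concentration inequality to be applied in a weighted/self-normalized form rather than with a single global $\lambda$. Once the inequality is set up with the right predictable scaling and $\lambda$ tuned so that $\lambda\sigma_s^2 \le \tfrac14(\mathcal{G}_s/\mathcal{G})\|\bar\vg_s/\sqrt{\va_s}\|^2$ and $\tfrac{1}{\lambda}\log(T/\delta) + (\max R_s\text{-term}) \le 3\mathcal{G}\log(T/\delta)$, the conclusion follows; the union bound over $t \in [T]$ accounts for the "$\forall t\in[T]$" and only costs the $\log T$ already present.
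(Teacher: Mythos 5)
Your overall plan (set up $D_s$ as a martingale difference sequence, bound it using Cauchy--Schwarz and $a_{s,i}\ge \mathcal{G}_s$, apply a martingale concentration inequality with a tunable parameter $\lambda$, union bound over $t\in[T]$) is the same route the paper takes. But you make a computational error that is precisely the source of the ``bookkeeping difficulty'' you then flag as unresolved. From $|D_s|\le \|\bar{\vg}_s/\sqrt{\va_s}\|\cdot\|\vxi_s\|/\sqrt{\mathcal{G}_s}$ together with $\|\vxi_s\|^2\le\mathcal{G}_s^2$ you conclude $|D_s|\le\|\bar{\vg}_s/\sqrt{\va_s}\|$, but that drops a factor: $\|\vxi_s\|/\sqrt{\mathcal{G}_s}\le \mathcal{G}_s/\sqrt{\mathcal{G}_s}=\sqrt{\mathcal{G}_s}$, so in fact $|D_s|\le\sqrt{\mathcal{G}_s}\,\|\bar{\vg}_s/\sqrt{\va_s}\|$, and correspondingly the (conditional) second moment is bounded by $\mathcal{G}_s\|\bar{\vg}_s/\sqrt{\va_s}\|^2$, not by $\|\bar{\vg}_s/\sqrt{\va_s}\|^2$.

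This extra $\mathcal{G}_s$ in the variance proxy is exactly what produces the desired weight $\mathcal{G}_s/\mathcal{G}$ after one choice of a single global parameter. With $\sigma_s^2\le \mathcal{G}_s\|\bar{\vg}_s/\sqrt{\va_s}\|^2$ and a concentration bound of the form $\sum_{s\le t}D_s\le\frac{3\lambda}{4}\sum_{s\le t}\sigma_s^2+\frac1\lambda\log(T/\delta)$ (the paper's Lemma \ref{lem:Azuma}, which is of $\psi_2$ type and is applicable here because Assumption~(A3) bounds the increments almost surely, giving $\mE[\exp(D_s^2/\sigma_s^2)\mid\mathcal{F}_{s-1}]\le \mathrm{e}$ directly), setting $\lambda=1/(3\mathcal{G})$ yields $\frac{3\lambda}{4}\mathcal{G}_s=\frac{\mathcal{G}_s}{4\mathcal{G}}$ and $\frac1\lambda\log(T/\delta)=3\mathcal{G}\log(T/\delta)$. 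No weighted/self-normalized martingale inequality or time-varying $\lambda_s$ is needed: once you keep the $\mathcal{G}_s$ you dropped, the obstacle you anticipated dissolves and the proof closes with a single scalar tuning of $\lambda$ and a union bound over $t$. In its current form, then, your proposal has a genuine arithmetic gap and does not actually reach the claimed bound; fixing that single estimate makes the rest go through essentially as you envisioned.
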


The {\bf A.1.2} serves as an error term for introducing $\va_s$. However, due to the delicate construction of $\va_s$, we could estimate the gap as follows, 
 \begin{lemma}\label{lem:gap_as_bs}
Under Assumption (A3),  let $\vb_s=(\bsi)_i,\va_s=(\asi)_i$ be defined in \eqref{eq:y_iterative} and \eqref{eq:proxy_stepsize}. Then 
    \begin{align*}
        \left|\frac{1}{\asi} - \frac{1}{\bsi} \right| \le \frac{\mG_s}{\asi\bsi},\quad \forall s \in[T] ,\forall i \in [d].
    \end{align*}
 \end{lemma}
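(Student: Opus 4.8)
\textbf{Proof proposal for Lemma~\ref{lem:gap_as_bs}.}

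The plan is to work coordinate-by-coordinate and exploit the common algebraic identity
\[
\frac{1}{\asi} - \frac{1}{\bsi} = \frac{\bsi - \asi}{\asi\bsi},
\]
so that the claim reduces to showing $|\bsi - \asi| \le \mG_s$ for every $s \in [T]$ and $i \in [d]$. Writing out the two quantities, $\bsi = \sqrt{\sum_{k=1}^s g_{k,i}^2} + \ep$ and $\asi = \sqrt{\sum_{k=1}^{s-1} g_{k,i}^2 + \mG_s^2} + \ep$, the $\ep$'s cancel, so I need to bound $\bigl|\sqrt{\vsi[,i]} - \sqrt{\vsi[s-1,i] + \mG_s^2}\bigr|$ where $\vsi[,i] = \vsi[s-1,i] + \gsi^2$. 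Here I would use the elementary inequality $|\sqrt{u} - \sqrt{w}| \le \sqrt{|u-w|}$ (valid for $u,w \ge 0$), which turns the target into $|\gsi^2 - \mG_s^2| \le \mG_s^2$, i.e. it suffices to prove $\gsi^2 \le 2\mG_s^2$.

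The second and crucial step is therefore the pointwise gradient bound $\gsi^2 \le \|\vg_s\|^2 \le 2\mG_s^2$. Unpacking the definition $\mG_s^2 = X\delx_s + 2C$ with $X = 2A + 4LB + 4L$, this is $\|\vg_s\|^2 \le 2(2A+4LB+4L)\delx_s + 4C$. I would get this from Assumption (A3) together with smoothness: from (A3), $\|\vg_s\|^2 \le 2\|\vxi_s\|^2 + 2\|\bar{\vg}_s\|^2 \le 2A\delx_s + (2B+2)\|\bar{\vg}_s\|^2 + 2C$, and then the standard consequence of $L$-smoothness and (A1), namely $\|\nabla f(\vx)\|^2 \le 2L(f(\vx)-f^*)$, gives $\|\bar{\vg}_s\|^2 \le 2L\delx_s$. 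Substituting, $\|\vg_s\|^2 \le 2A\delx_s + (2B+2)\cdot 2L\delx_s + 2C = (2A + 4LB + 4L)\delx_s + 2C = X\delx_s + 2C = \mG_s^2 \le 2\mG_s^2$. (In fact this shows $\|\vg_s\|^2 \le \mG_s^2$, which is even a bit stronger than what the $\sqrt{|u-w|}$ route needs; it directly gives $|\vsi[,i] - (\vsi[s-1,i]+\mG_s^2)| = |\gsi^2 - \mG_s^2| \le \mG_s^2$ since both $\gsi^2$ and $\mG_s^2$ lie in $[0,\mG_s^2]$ after noting $\gsi^2 \le \|\vg_s\|^2 \le \mG_s^2$.)

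Assembling the pieces: $|\bsi - \asi| = \bigl|\sqrt{\vsi[,i]} - \sqrt{\vsi[s-1,i]+\mG_s^2}\bigr| \le \sqrt{|\gsi^2 - \mG_s^2|} \le \sqrt{\mG_s^2} = \mG_s$, and dividing by $\asi\bsi$ via the identity above yields the claim. The only mild subtlety — the part I would be most careful about — is making sure the definition of $\mG_s$ (built from $\delx_s$, the function-value gap at $\vx_s$) really does dominate $\|\vg_s\|^2$ through the chain (A3) $\to$ smoothness-derived gradient bound, i.e. that the constant $X$ was chosen precisely so that $\|\vg_s\|^2 \le \mG_s^2$ holds deterministically under (A3); everything else is the two elementary square-root inequalities. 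No martingale or probabilistic argument is needed here — the bound is almost sure, inherited directly from the a.s. form of (A3).
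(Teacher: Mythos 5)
Your proposal is correct and follows essentially the same route as the paper: both establish $\|\vg_s\|^2 \le \mG_s^2$ by combining Assumption (A3) with the smoothness bound $\|\bar{\vg}_s\|^2 \le 2L\delx_s$, then reduce the claim to bounding $|\sqrt{\vsi[,i]} - \sqrt{\tilde{v}_{s,i}}|$ by $\mG_s$. The only (cosmetic) difference is the last step: you invoke the inequality $|\sqrt{u}-\sqrt{w}| \le \sqrt{|u-w|}$, whereas the paper factors the difference exactly as $|u-w|/(\sqrt{u}+\sqrt{w})$ and uses $\sqrt{\tilde{v}_{s,i}} \ge \mG_s$; both yield the same bound $\mG_s$ here, so your argument is sound.
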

 
Based on this lemma, it's then shown in the following lemma that {\bf A.1.2} could be controlled.
\begin{lemma}\label{lem:A.1.2}
    Under Assumption (A3),   for any $t \ge 1$, if $\beta \in [0,1)$, it holds that 
    \begin{align}
        {\bf A.1.2} \le  \frac{1}{4} \sum_{s=1}^t\left\|\frac{\bar{\vg}_s}{\sqrt{\va_s}} \right\|^2 + \sum_{s=1}^t \mG_s \left\|\frac{\vg_s}{\vb_s} \right\|^2. \label{eq:A.1.2}
    \end{align}
\end{lemma}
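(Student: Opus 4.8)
The plan is to reduce the whole estimate to a coordinate-wise Young-type inequality and then sum. Fix $s\in[t]$ and $i\in[d]$. First bound the $(s,i)$-summand of $\mathbf{A.1.2}$ by its absolute value and invoke Lemma~\ref{lem:gap_as_bs}:
\begin{align*}
\bgsi\left(\frac{1}{\asi}-\frac{1}{\bsi}\right)\gsi \;\le\; |\bgsi|\,\left|\frac{1}{\asi}-\frac{1}{\bsi}\right|\,|\gsi| \;\le\; \frac{|\bgsi|\,\mG_s\,|\gsi|}{\asi\,\bsi}.
\end{align*}

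Next I split this product so that it matches the two target terms: write it as $\bigl(|\bgsi|/\sqrt{\asi}\bigr)\cdot\bigl(\mG_s|\gsi|/(\sqrt{\asi}\,\bsi)\bigr)$ and apply the elementary bound $xy\le x^2/4+y^2$ (valid for $x,y\ge 0$). This gives
\begin{align*}
\bgsi\left(\frac{1}{\asi}-\frac{1}{\bsi}\right)\gsi \;\le\; \frac14\,\frac{\bgsi^2}{\asi} \;+\; \frac{\mG_s^2\,\gsi^2}{\asi\,\bsi^2}.
\end{align*}
Now I use the key structural property of the proxy step-size: since $\asi=\sqrt{v_{s-1,i}+\mG_s^2}+\ep\ge \mG_s$ and $\mG_s=\sqrt{X\delx_s+2C}>0$, we have $\mG_s^2/\asi\le \mG_s$, so the second term is at most $\mG_s\,\gsi^2/\bsi^2$.

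Finally, summing the coordinate-wise inequality over $i\in[d]$ and then over $s\in[t]$, and recognizing $\sum_i\bgsi^2/\asi=\|\bar{\vg}_s/\sqrt{\va_s}\|^2$ and $\sum_i\gsi^2/\bsi^2=\|\vg_s/\vb_s\|^2$, yields exactly \eqref{eq:A.1.2}. There is no serious obstacle here beyond invoking Lemma~\ref{lem:gap_as_bs}; the only point requiring a little care is choosing the split in Young's inequality so that the leftover factor $\mG_s^2/\asi$ collapses to $\mG_s$ via the definition \eqref{eq:proxy_stepsize} of $\va_s$. The hypothesis $\beta\in[0,1)$ is used only to ensure the iterates (hence $\vb_s$ and $\va_s$) are well defined; the inequality itself is insensitive to $\beta$.
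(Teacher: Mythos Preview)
Your proof is correct and follows essentially the same approach as the paper: bound each summand via Lemma~\ref{lem:gap_as_bs}, apply the Young-type split $xy\le x^2/4+y^2$ with $x=|\bgsi|/\sqrt{\asi}$ and $y=\mG_s|\gsi|/(\sqrt{\asi}\,\bsi)$, and then use $\asi\ge\mG_s$ (equivalently $1/\asi\le 1/\mG_s$) to collapse $\mG_s^2/\asi$ to $\mG_s$. The only difference is presentational.
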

 
Finally, we rely on the smoothness to estimate {\bf A.2}.
\begin{lemma}\label{lem:A.2}
    For any $t \ge 1$, if $\beta \in [0,1)$, it holds that 
    \begin{align}
        {\bf A.2}\le \frac{L }{2\eta}\sum_{s=1}^t \|\vm_{s-1}\|^2 + \frac{L\eta}{2(1-\beta)^2}\sum_{s=1}^t  \left\|\frac{\vg_s}{\vb_s} \right\|^2. \label{eq:A.2}
    \end{align}
\end{lemma}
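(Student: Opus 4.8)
The statement to prove is Lemma~\ref{lem:A.2}, bounding the term $\textbf{A.2} = \sum_{s=1}^t \langle \bar{\vg}_s - \nabla f(\vy_s), \vg_s/\vb_s\rangle$.

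The plan is to exploit smoothness to control $\bar{\vg}_s - \nabla f(\vy_s) = \nabla f(\vx_s) - \nabla f(\vy_s)$, then apply Young's inequality to split the resulting inner product into a term matching the structure on the right-hand side. First I would write $\langle \bar{\vg}_s - \nabla f(\vy_s), \vg_s/\vb_s\rangle \le \|\nabla f(\vx_s) - \nabla f(\vy_s)\| \cdot \|\vg_s/\vb_s\|$ by Cauchy--Schwarz. By $L$-smoothness, $\|\nabla f(\vx_s) - \nabla f(\vy_s)\| \le L\|\vx_s - \vy_s\|$. From the definition \eqref{eq:define_y_s}, $\vx_s - \vy_s = -\frac{\beta}{1-\beta}(\vx_s - \vx_{s-1}) = -\frac{\beta}{1-\beta}\vm_{s-1}$, so $\|\nabla f(\vx_s) - \nabla f(\vy_s)\| \le \frac{L\beta}{1-\beta}\|\vm_{s-1}\|$. (For $s=1$, $\vm_0 = \bm{0}_d$ and $\vy_1 = \vx_1$, so the bound holds trivially.)

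Next I would combine: $\textbf{A.2} \le \sum_{s=1}^t \frac{L\beta}{1-\beta}\|\vm_{s-1}\| \cdot \|\vg_s/\vb_s\|$. Applying Young's inequality $uv \le \frac{\alpha}{2}u^2 + \frac{1}{2\alpha}v^2$ with a suitable choice of $\alpha$ to each summand, I would pick $\alpha$ so that the first piece becomes $\frac{L}{2\eta}\|\vm_{s-1}\|^2$; that forces $\frac{L\beta}{1-\beta}\cdot\frac{\alpha}{2} = \frac{L}{2\eta}$, i.e. $\alpha = \frac{\eta(1-\beta)}{\beta}$ (wlog $\beta \in (0,1)$; the case $\beta = 0$ makes $\textbf{A.2}=0$). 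Then the second piece is $\frac{1}{2\alpha}\left(\frac{L\beta}{1-\beta}\right)^2\|\vg_s/\vb_s\|^2 = \frac{L^2\beta}{2\eta(1-\beta)}\cdot\frac{\beta}{1-\beta}\|\vg_s/\vb_s\|^2$, which I would coarsely upper-bound using $\beta < 1$ by $\frac{L\eta}{2(1-\beta)^2}\|\vg_s/\vb_s\|^2$ — wait, this requires $L\beta^2/\eta \le L\eta$, which need not hold in general, so I would instead keep a cleaner split: apply Young's with the weight chosen to directly produce the stated coefficients, using $\frac{\beta^2}{(1-\beta)^2} \le \frac{1}{(1-\beta)^2}$ and a factor-of-$L$ bookkeeping, or equivalently note the true constant can always be relaxed upward to $\frac{L\eta}{2(1-\beta)^2}$ after absorbing. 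Summing over $s \in [t]$ then yields \eqref{eq:A.2}.

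The main obstacle is not conceptual but a matter of getting the Young's-inequality weight exactly right so both coefficients simultaneously match $\frac{L}{2\eta}$ and $\frac{L\eta}{2(1-\beta)^2}$; this works because the product of the two target coefficients is $\frac{L^2}{4(1-\beta)^2}$, which is precisely $\frac{1}{4}$ times the square of the prefactor $\frac{L\beta}{1-\beta}$ divided by $\beta^2$, so the AM--GM split is tight up to the harmless replacement $\beta^2 \to 1$. I would also double-check the index bookkeeping at $s=1$ (where $\vm_0=\bm 0_d$) to make sure no boundary term is dropped, and confirm that $\vx_s - \vx_{s-1} = \vm_{s-1}$ holds for all $s\ge 1$ under the convention $\vx_0 = \vx_1$ from \eqref{eq:x_iterate}.
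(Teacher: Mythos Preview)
Your approach is correct and essentially identical to the paper's: Cauchy--Schwarz, then $L$-smoothness with $\|\vx_s-\vy_s\|=\frac{\beta}{1-\beta}\|\vm_{s-1}\|$, then Young's inequality. The paper avoids your mid-argument tangle by immediately relaxing $\beta\le 1$ to obtain $\|\bar{\vg}_s-\nabla f(\vy_s)\|\le \frac{L}{1-\beta}\|\vm_{s-1}\|$, after which Young's with weight $\alpha=(1-\beta)/\eta$ gives both target coefficients exactly; note also your displayed $\alpha=\frac{\eta(1-\beta)}{\beta}$ is a slip (solving $\frac{L\beta}{1-\beta}\cdot\frac{\alpha}{2}=\frac{L}{2\eta}$ gives $\alpha=\frac{1-\beta}{\eta\beta}$), though your final AM--GM observation that $\beta^2\le 1$ suffices is correct and repairs the argument.
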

% \begin{proof}
%     Using the smoothness of $f$, \eqref{eq:define_y_s} and $\beta \in [0,1)$,
%     \begin{align}
%         &\|\bar{\vg}_s - \nabla f(\vy_s)\| 
%         \le L \|\vy_s - \vx_s\| \nonumber\\
%         =& \frac{L\beta}{1-\beta}\|\vx_s - \vx_{s-1}\| 
%         \le \frac{L}{1-\beta}\|\vm_{s-1}\|. \label{eq:gradient_xs_ys}
%     \end{align}
%     Applying Cauchy-Schwarz inequality and using \eqref{eq:gradient_xs_ys}, 
%     \begin{align*}
%         {\bf A.2} &\le \sum_{s=1}^t \|\bar{\vg}_s - \nabla f(\vy_s)\| \left\|\frac{\vg_s}{\vb_s}\right\| \nonumber\\
%         &\le \frac{L}{1-\beta} \sum_{s=1}^t \|\vm_{s-1}\|\left\|\frac{\vg_s}{\vb_s} \right\| \nonumber \\
%         &\le  \frac{L }{2\eta}\sum_{s=1}^t \|\vm_{s-1}\|^2 + \frac{L\eta}{2(1-\beta)^2}\sum_{s=1}^t\left\|\frac{\vg_s}{\vb_s} \right\|^2 . 
%     \end{align*}
%     The proof is complete.
% \end{proof}
\subsection{Bounding the function value gap}
Based on the above estimations, we could use an induction argument to deduce an upper bound for function value gaps. The induction technique is motivated by \citep{attia2023sgd} where AdaGrad-Norm with affine variance noise was studied. As we study a more relaxed assumption on AdaGrad, it's required to provide some new estimations.
\begin{proposition}\label{pro:delta_s}
    Under the same conditions of Theorem \ref{thm:1}, the following two inequalities hold with probability at least $1-\delta$,
    \begin{align}\label{eq:pro_1}
        \delx_t \le \del, \quad \mG_t \le \mG, \quad \forall t \in [T+1],
    \end{align}
    and
    \begin{align}\label{eq:pro_2}
        \delx_{t+1} \le \tdel-  \frac{\eta}{1-\beta} \sum_{s=1}^t\left\|\frac{\bar{\vg}_s}{\sqrt{\va_s}} \right\|^2,\quad \forall  t \in [T],
    \end{align}
    where $\del$ is as in Theorem \ref{thm:1} and $\mG_t,\mG$ are as in \eqref{eq:define_G_t}.
\end{proposition}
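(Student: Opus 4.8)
The plan is to prove the two claims \eqref{eq:pro_1} and \eqref{eq:pro_2} simultaneously by strong induction on $t$, conditioning on the probability-$1-\delta$ event on which Lemma \ref{lem:1_bounded} holds (the only probabilistic ingredient); all the other lemmas are deterministic. First I would assemble the master inequality: starting from \eqref{eq:A+B}, plug in the decomposition \eqref{eq:A_decomp}, then \eqref{eq:A.1.12} for {\bf A.1}, and bound {\bf A.1.1}, {\bf A.1.2}, {\bf A.2} via Lemmas \ref{lem:1_bounded}, \ref{lem:A.1.2}, \ref{lem:A.2}. This yields, for each $t$, an estimate of the shape
\begin{align*}
    \dely_{t+1} \le \delx_1 - \frac{\eta}{1-\beta}\sum_{s=1}^t\left(1 - \tfrac14\tfrac{\mG_s}{\mG} - \tfrac14 - \tfrac14\right)\left\|\frac{\bar{\vg}_s}{\sqrt{\va_s}}\right\|^2 + (\text{error terms}),
\end{align*}
where the error terms are $\frac{3\eta}{1-\beta}\mG\log(T/\delta)$, the momentum term $\frac{L}{2\eta}\sum\|\vm_{s-1}\|^2$ (controlled by Lemma \ref{lem:estimation_rough} as $\le \frac{L\eta d t}{2(1-\beta)^2}$), and the terms $\sum_s(\mG_s+\text{const}\cdot\eta)\left\|\vg_s/\vb_s\right\|^2$. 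The coefficient in front of $\left\|\bar{\vg}_s/\sqrt{\va_s}\right\|^2$ is $\ge \frac{1}{4}$ once we know $\mG_s\le\mG$ (which is exactly the inductive hypothesis), so the first-order sum stays with a favorable sign — this is the structural reason the induction closes.

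The key remaining obstacle, and where I expect the real work to be, is controlling $\sum_{s=1}^t \left\|\vg_s/\vb_s\right\|^2$ and the weighted variant $\sum_{s=1}^t \mG_s\left\|\vg_s/\vb_s\right\|^2$. The unweighted sum is the classical AdaGrad ``gradient/denominator'' quantity; since $\vb_s = \sqrt{\vv_s}+\vep$ with $\vv_s = \sum_{r\le s}\vg_r^2$, a coordinate-wise telescoping/integral bound gives $\sum_{s=1}^t g_{s,i}^2/(\sqrt{v_{s,i}}+\ep)^2 \le \mathcal{O}(\log(v_{t,i}/\ep^2+1))$, and then $v_{t,i}$ is bounded using Assumption (A3) together with Lemma \ref{lem:estimation_rough} and the rough bound $\sum_s\delx_s = \mathcal{O}(\mathrm{poly}(T))$ from Lemma \ref{lem:delta_rough}, producing the $\mathrm{poly}(\log(T/\delta+T/\ep^2))$ factors visible in the statement of $\tdel$. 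For the $\mG_s$-weighted sum I would use $\mG_s\le\mG$ (inductive hypothesis) to reduce to $\mG$ times the unweighted sum; the point is that $\mG\cdot\log(\cdot)$ contributes precisely the $\sqrt{C}\,\eta d\log(\cdot)$ and $(A+B_1L)\eta^2 d^2\log^2(\cdot)$ pieces of $\tdel$ after using $\mG = \sqrt{X\del+2C}$ with $X=2A+4LB+4L$ and $\sqrt{a+b}\le\sqrt a+\sqrt b$.

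With the master inequality in hand, the induction itself is then bookkeeping: the base case $t=0$ (or $t=1$) is immediate since $\delx_1\le\del$ by the definition of $\del\sim\mathcal{O}(f(\vx_1)-f^*+\cdots)$. For the inductive step, assuming $\delx_s\le\del$ and $\mG_s\le\mG$ for all $s\le t$, the master inequality gives $\dely_{t+1}\le \delx_1 + (\text{all error terms}) - \frac{\eta}{4(1-\beta)}\sum_{s=1}^t\left\|\bar{\vg}_s/\sqrt{\va_s}\right\|^2 \le \tdel$ after verifying that the collected error terms sum to at most $\tdel - \delx_1$ — this is where the specific constants defining $\tdel$ in \eqref{eq:define_tD} get chosen. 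To pass from a bound on $\dely_{t+1}=f(\vy_{t+1})-f^*$ to the claimed bound on $\delx_{t+1}=f(\vx_{t+1})-f^*$, I would relate $\vx_{t+1}$ and $\vy_{t+1}$ through \eqref{eq:define_y_s}: since $\vy_{t+1}-\vx_{t+1}=\frac{\beta}{1-\beta}(\vx_{t+1}-\vx_t)=\frac{\beta}{1-\beta}\vm_t$, smoothness plus Lemma \ref{lem:estimation_rough} bounds $|\delx_{t+1}-\dely_{t+1}|$ by $\mathcal{O}(L\eta^2 d/(1-\beta)^4 + \|\nabla f(\vy_{t+1})\|\cdot\eta\sqrt d/(1-\beta)^2)$, which is absorbed into $\del$; this then gives $\delx_{t+1}\le\del$ and hence $\mG_{t+1}\le\mG$, closing the induction and simultaneously yielding \eqref{eq:pro_2} (the version that retains the negative first-order sum, needed downstream in the proof of Theorem \ref{thm:1}).
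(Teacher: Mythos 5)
Your high-level strategy — condition on the event where Lemma \ref{lem:1_bounded} holds, assemble a master descent inequality, and close a strong induction over $\delx_s\le\del$ (hence $\mG_s\le\mG$) — is exactly the paper's route, and several details are right: you correctly identify that the coefficient in front of $\sum_s\|\bar{\vg}_s/\sqrt{\va_s}\|^2$ stays negative once the inductive hypothesis gives $\mG_s/\mG\le 1$, that the $\mG_s$-weighted sum reduces to $\mG$ times the unweighted one under the inductive hypothesis, and that the split $\mG\le\sqrt{X\del}+\sqrt{2C}$ feeds the structure of $\tdel$.

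However, there is a concrete gap that would make the induction fail: you propose to bound the momentum error $\sum_{s=1}^t\|\vm_{s-1}\|^2$ via Lemma \ref{lem:estimation_rough}, yielding $\mathcal{O}(\eta^2 d\,t/(1-\beta)^2)$. This grows \emph{linearly in $t$}, and since the induction must absorb all error terms into a single constant $\tdel$ that is independent of $t$ (and only polylogarithmic in $T$), a linear-in-$t$ contribution cannot be swallowed. The paper sidesteps this with Lemma \ref{lem:sum_1}, which produces the much sharper $\sum_{s=1}^t\|\vm_s\|^2\le \frac{\eta^2 d}{(1-\beta)^2}\log\mF_T$ (and similarly $\|\vm_t\|^2\le\frac{\eta^2 d}{1-\beta}\log\mF_T$) by running an AdaGrad-style log-sum argument directly on the momentum recursion \eqref{eq:ms_iteration}, rather than using the crude term-by-term bound. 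This is precisely the quantitative improvement that the paper highlights after stating Lemma \ref{lem:sum_1}, and it is what makes $\tdel$ depend on $T$ only through $\log\mF_T$. Two smaller points: your description of the $\dely\to\delx$ passage introduces a $\|\nabla f(\vy_{t+1})\|$ term that is not obviously controllable — the paper's Lemma \ref{lem:delta_y_x} instead uses Young's inequality plus $\|\nabla f(\vy_s)\|^2\le 2L\dely_s$ (Lemma \ref{lem:gradient_delta_s}) to absorb the gradient and obtain the \emph{multiplicative} relation $\dely_s\ge\delx_s/2-\frac{L\|\vm_{s-1}\|^2}{2(1-\beta)^2}$; and the self-referential $\sqrt{\del}$ appearing in $\mG$ must be resolved explicitly via Young's inequality (producing the $\del/4$ term the paper cites), not merely by the additive split $\sqrt{a+b}\le\sqrt a+\sqrt b$.
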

%\begin{proof}
In what follows, we prove Proposition \ref{pro:delta_s}. We assume that \eqref{eq:A.1.1} always happens and then deduce \eqref{eq:pro_1} and \eqref{eq:pro_2}. Recall that \eqref{eq:A.1.1} holds with probability at least $1-\delta$. We therefore obtain that both \eqref{eq:pro_1} and \eqref{eq:pro_2} would hold with probability at least $1-\delta$.
We first plug \eqref{eq:A.1.12}, \eqref{eq:A.1.1} and \eqref{eq:A.1.2} into \eqref{eq:A_decomp}, and then combine with \eqref{eq:A.2} and \eqref{eq:A+B} to get that
\begin{align}\label{eq:final_1}
        \dely_{t+1}
        &\le \delx_1 + \frac{\eta}{1-\beta}\sum_{s=1}^t\left(\frac{\mG_s}{4\mG}-\frac{3}{4} \right) \left\|\frac{\bar{\vg}_s}{\sqrt{\va_s}} \right\|^2  \nonumber \\
        &+ \frac{3 \mG \eta}{1-\beta} \log \left(\frac{T}{\delta} \right)  + \frac{\eta}{1-\beta}\sum_{s=1}^t \mG_s \left\|\frac{\vg_s}{\vb_s} \right\|^2 \nonumber\\
        &+ \frac{L }{2(1-\beta)}\sum_{s=1}^t \|\vm_{s-1}\|^2 + \tL\sum_{s=1}^t  \left\|\frac{\vg_s}{\vb_s} \right\|^2,
    \end{align}
where we let $\tL = \frac{L\eta^2}{2(1-\beta)^3} + \frac{L\eta^2}{2(1-\beta)^2}$. Then, we present the specific definition of $ \Delta$ as
\begin{align}\label{eq:define_tD}
    \Delta&:= 4\delx_1+ \frac{12 \sqrt{2C } \eta }{1-\beta} \log \left(\frac{T}{\delta} \right) \\
    &+4 \left(\frac{ \sqrt{2C }\eta}{1-\beta}+ \frac{ \eta^2L}{(1-\beta)^3}+\tL \right)d\log \mF_T \nonumber\\
    &+ \frac{72X\eta^2}{(1-\beta)^2}\log^2\left(\frac{T}{\delta} \right) 
    +\frac{8X\eta^2}{(1-\beta)^2} d^2\log^2\mF_T.  \nonumber
%    \nonumber
\end{align}
Here, $\mF_T$ is a polynomial with respect to $T$ with the detailed expression in \eqref{eq:define_poly_F} from Appendix.
Then, it's easy to verify that $\delx_1 \le \del$. Suppose that for some $t \in [T]$, 
\begin{align}
    \delx_s \le \del, \forall s \in [t], \quad \text{thus}, \quad \mG_s \le \mG, \forall s \in [t]. \label{eq:induction_assumption}
\end{align}
In order to apply \eqref{eq:final_1} to control $\delx_{t+1}$, we introduce the following lemma to lower bound the LHS of \eqref{eq:final_1}.
\begin{lemma}\label{lem:delta_y_x}
Let $\vy_s$ be defined in \eqref{eq:define_y_s} and $\beta \in [0,1)$. Then for any $s \ge 1$,
\begin{align*}
    \dely_s \ge \frac{\delx_s}{2} - \frac{L\|\vm_{s-1} \|^2 }{2(1-\beta)^2}.
\end{align*}
\end{lemma}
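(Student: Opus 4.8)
\textbf{Proof plan for Lemma \ref{lem:delta_y_x}.}

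The plan is to compare $f(\vy_s)$ with $f(\vx_s)$ using the smoothness inequality, and to absorb the resulting cross term using the standard ``self-bounding'' property of an $L$-smooth function that is bounded below. First I would dispose of the case $s=1$: there $\vy_1=\vx_1$ and $\vm_0={\bf 0}_d$, so the claimed inequality $\dely_1\ge \delx_1/2$ is immediate (indeed $\dely_1=\delx_1\ge 0$). For $s\ge 2$, I would record from Algorithm \ref{alg:AdaGrad} that $\vx_s-\vx_{s-1}=\vm_{s-1}$, hence by \eqref{eq:define_y_s}
\[
\vy_s-\vx_s=\frac{\beta}{1-\beta}(\vx_s-\vx_{s-1})=\frac{\beta}{1-\beta}\,\vm_{s-1}.
\]

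Next, applying $L$-smoothness at the pair $(\vy_s,\vx_s)$ gives
$f(\vx_s)\le f(\vy_s)+\la\nabla f(\vy_s),\vx_s-\vy_s\ra+\frac{L}{2}\|\vx_s-\vy_s\|^2$, i.e.
$\dely_s\ge \delx_s-\la\nabla f(\vy_s),\vx_s-\vy_s\ra-\frac{L}{2}\|\vx_s-\vy_s\|^2$.
I would then bound the inner product by Cauchy--Schwarz followed by Young's inequality with a weight chosen so that the coefficient of $\|\nabla f(\vy_s)\|^2$ is exactly $\tfrac{1}{2L}$:
$-\la\nabla f(\vy_s),\vx_s-\vy_s\ra\ge-\tfrac{1}{2L}\|\nabla f(\vy_s)\|^2-\tfrac{L}{2}\|\vx_s-\vy_s\|^2$.
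To control $\|\nabla f(\vy_s)\|^2$ I would invoke the well-known consequence of (A1) plus $L$-smoothness, namely $\|\nabla f(\vx)\|^2\le 2L(f(\vx)-f^*)$ for all $\vx$ (obtained by substituting the gradient step $\vx-\tfrac1L\nabla f(\vx)$ into the smoothness upper bound and using $f\ge f^*$). With $\vx=\vy_s$ this reads $\tfrac{1}{2L}\|\nabla f(\vy_s)\|^2\le \dely_s$. Combining the three displays yields $\dely_s\ge \delx_s-\dely_s-L\|\vx_s-\vy_s\|^2$, hence $\dely_s\ge \tfrac12\delx_s-\tfrac{L}{2}\|\vx_s-\vy_s\|^2$.

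Finally I would substitute $\|\vx_s-\vy_s\|^2=\frac{\beta^2}{(1-\beta)^2}\|\vm_{s-1}\|^2\le \frac{1}{(1-\beta)^2}\|\vm_{s-1}\|^2$ (using $\beta\in[0,1)$) to obtain the stated bound. The only subtlety worth flagging is in the second paragraph: the cross term must be absorbed through the self-bounding estimate \emph{at the point $\vy_s$} (so that it recombines with $\dely_s$ on the left-hand side), rather than through a bound in terms of $\delx_s$; the latter route works too but produces an extra numerical factor in front of $\|\vm_{s-1}\|^2/(1-\beta)^2$, whereas the route above gives exactly the constant $L/(2(1-\beta)^2)$ claimed. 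Everything else is routine.
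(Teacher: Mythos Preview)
Your proposal is correct and follows essentially the same approach as the paper: apply the smoothness descent lemma at $(\vy_s,\vx_s)$, use Young's inequality with weight $1/(2L)$ on the cross term, absorb $\tfrac{1}{2L}\|\nabla f(\vy_s)\|^2$ via the self-bounding bound $\|\nabla f(\vy_s)\|^2\le 2L\dely_s$ (the paper's Lemma~\ref{lem:gradient_delta_s}), and then substitute $\|\vx_s-\vy_s\|=\tfrac{\beta}{1-\beta}\|\vm_{s-1}\|$ together with $\beta<1$. The only cosmetic difference is that the paper carries the factor $\beta^2$ explicitly and relaxes it at the end, whereas you relax it after substituting; the resulting constants match exactly.
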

Based on Lemma \ref{lem:delta_y_x}, the LHS of \eqref{eq:final_1} could be lower bounded in terms of $\delx_{t+1}$. We use \eqref{eq:induction_assumption} to upper bound the RHS of \eqref{eq:final_1}, which leads to
\begin{align*}
    &\frac{\delx_{t+1}}{2}
    \le \delx_1 - \frac{\eta}{2(1-\beta)}\sum_{s=1}^t\left\|\frac{\bar{\vg}_s}{\sqrt{\va_s}} \right\|^2 + \frac{L \|\vm_t\|^2}{2(1-\beta)^2}  \\
    &+ \frac{3  (\sqrt{X\del}+\sqrt{2C}) \eta }{1-\beta} \log \left(\frac{T}{\delta} \right)+ \tL\sum_{s=1}^t  \left\|\frac{\vg_s}{\vb_s} \right\|^2  \\
    &+ \frac{(\sqrt{X\del}+\sqrt{2C})\eta }{1-\beta}\sum_{s=1}^t   \left\|\frac{\vg_s}{\vb_s} \right\|^2 + \sum_{s=1}^t \frac{L \|\vm_{s-1}\|^2}{2(1-\beta)},
\end{align*}
where we use $\mG \le \sqrt{X\del}+\sqrt{2C}$. 
Further, using Young's inequality twice for the terms related to $\sqrt{X\Delta}$, and $\beta<1,$
\begin{align}
    &\frac{\delx_{t+1}}{2}
    \le \frac{\del}{4}+ \delx_1- \frac{\eta}{2(1-\beta)} \sum_{s=1}^t\left\|\frac{\bar{\vg}_s}{\sqrt{\va_s}} \right\|^2 \nonumber \\
    & + \frac{ L\|\vm_t \|^2}{2(1-\beta)^2}+ \frac{3 \sqrt{2C } \eta }{1-\beta} \log \left(\frac{T}{\delta} \right) \nonumber\\
    &+ \left(\frac{ \sqrt{2C }\eta}{1-\beta}+\tL \right)\sum_{s=1}^t  \left\|\frac{\vg_s}{\vb_s} \right\|^2+ \sum_{s=1}^t \frac{L \|\vm_{s-1}\|^2}{2(1-\beta)} \nonumber\\
    &+ \frac{18X\eta^2}{(1-\beta)^2}\log^2\left(\frac{T}{\delta} \right) +\frac{2X\eta^2}{(1-\beta)^2}\left( \sum_{s=1}^t  \left\|\frac{\vg_s}{\vb_s} \right\|^2\right)^2. \label{eq:final_3} 
\end{align}

%Let
%\begin{align*}
%	 \eta L \sum_{s=1}^t  \left\|\frac{\vg_s}{\vb_s} \right\|^2   \leq  \sum_{s=1}^t\left\|\frac{\bar{\vg}_s}{\sqrt{\va_s}} \right\|^2?
%\end{align*}
%Or to prove 
%\begin{align*}
%	\eta L \sum_{s=1}^t  \left\|\frac{\vg_s}{\vb_s} \right\|^2   \leq  \sum_{s=1}^t\left\|\frac{\bar{\vg}_s}{\sqrt{\va_s}} \right\|^2\\
%	 \geq \sum_{s=1}^t\frac{\left\|\bar{\vg}_s \right\|^2}{\sqrt{\sum \|\bar{\vg}_s\|^2+ t C}}  \geq \sqrt{\sum \|\bar{\vg}_s\|^2+ t C} - \sqrt{tC} \end{align*}

Finally, we shall use the following lemma to further estimate $\|\vm_t\|$ and the other two summations related to $\vg_s,\vb_s,\vm_s$. 
\begin{lemma}\label{lem:sum_1}
    Given $T \ge 1$ and $\beta \in [0,1)$, then for any $t \in [T]$,
    \begin{align*}
        &\sum_{s=1}^t \left\|\frac{\vg_s}{\vb_s}\right\|^2 \le d\log \mF_T, \quad \|\vm_t\|^2  \le \frac{\eta^2d}{1-\beta}\log\mF_T, \\
        &\sum_{s=1}^t\|\vm_s\|^2  \le \frac{\eta^2d}{(1-\beta)^2}\log\mF_T,
    \end{align*}
    where $\mF_T$ is a polynomial with respect to $T$ with the detailed expression in \eqref{eq:define_poly_F} from Appendix.
\end{lemma}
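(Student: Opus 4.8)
The plan is to bound all three quantities by a single logarithmic factor coming from the standard AdaGrad telescoping estimate $\sum_{s=1}^t \gsi^2/\bsi^2 \le \log(\vsi_{,t}/\ep^2 + 1)$ applied coordinate-wise, combined with the rough a priori bounds on the iterates from Lemmas \ref{lem:estimation_rough} and \ref{lem:delta_rough}. First I would establish the coordinate-wise inequality
\begin{align*}
    \sum_{s=1}^t \frac{\gsi^2}{\bsi^2} \le \sum_{s=1}^t \frac{\gsi^2}{\ep^2 + \sum_{r=1}^s \gsi[r]^2} \le \log\left(1 + \frac{\vsi_{,t}}{\ep^2}\right),
\end{align*}
which is immediate from $\bsi \ge \sqrt{\vsi}$ together with the elementary fact that $\sum_s u_s/(c+\sum_{r\le s}u_r) \le \log(1 + (\sum_s u_s)/c)$ for nonnegative $u_s$ and $c>0$. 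Summing over $i\in[d]$ gives $\sum_{s=1}^t\|\vg_s/\vb_s\|^2 \le \sum_i \log(1 + v_{t,i}/\ep^2)$.

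Next I would convert the data-dependent bound $\sum_i \log(1+v_{t,i}/\ep^2)$ into the claimed $d\log\mF_T$ by controlling $v_{t,i} = \sum_{s=1}^t \gsi^2$. Using Assumption (A3), $\gsi^2 \le 2\|\vxi_s\|^2 + 2\bar g_{s,i}^2 \le 2A\delx_s + 2B\|\bar\vg_s\|^2 + 2C + 2\|\bar\vg_s\|^2$, so $v_{t,i}\le 2A\sum_{s\le t}\delx_s + 2(B+1)\sum_{s\le t}\|\bar\vg_s\|^2 + 2Ct$. Now I plug in the rough estimates: Lemma \ref{lem:delta_rough} bounds $\sum_{s\le t}\delx_s$ by a cubic polynomial in $T$ (with coefficients depending on $\delx_1,\|\bar\vg_1\|,L,\eta,d,(1-\beta)^{-1}$), and Lemma \ref{lem:estimation_rough} bounds $\|\bar\vg_s\|^2$ by $2\|\bar\vg_1\|^2 + 2L^2\eta^2 s^2 d/(1-\beta)^2$, so $\sum_{s\le t}\|\bar\vg_s\|^2$ is also polynomial in $T$. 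Hence $v_{t,i} \le \mF_T$ for an explicit polynomial $\mF_T$ in $T$ (this is exactly the $\mF_T$ referenced in \eqref{eq:define_poly_F}), giving $\sum_i\log(1+v_{t,i}/\ep^2) \le d\log\mF_T$ after absorbing the $1+1/\ep^2$ factor into $\mF_T$. This yields the first inequality.

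For the momentum bounds I would expand $\vm_t = -\eta\sum_{s=1}^t \beta^{t-s}\vg_s/\vb_s$ from the recursion in Algorithm \ref{alg:AdaGrad}. Applying Cauchy–Schwarz (or Jensen) on the convex combination with weights $\beta^{t-s}$,
\begin{align*}
    \|\vm_t\|^2 \le \eta^2\left(\sum_{s=1}^t\beta^{t-s}\right)\left(\sum_{s=1}^t\beta^{t-s}\left\|\frac{\vg_s}{\vb_s}\right\|^2\right) \le \frac{\eta^2}{1-\beta}\sum_{s=1}^t\left\|\frac{\vg_s}{\vb_s}\right\|^2 \le \frac{\eta^2 d}{1-\beta}\log\mF_T,
\end{align*}
where monotonicity in $t$ of the partial sums $\sum_{s\le t}\|\vg_s/\vb_s\|^2$ (they are nonnegative) lets me use the bound for the full horizon. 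Finally, $\sum_{s=1}^t\|\vm_s\|^2 \le \frac{\eta^2}{1-\beta}\sum_{s=1}^t\sum_{r=1}^s\beta^{s-r}\|\vg_r/\vb_r\|^2 \le \frac{\eta^2}{1-\beta}\sum_{r=1}^t\|\vg_r/\vb_r\|^2\sum_{s\ge r}\beta^{s-r} \le \frac{\eta^2}{(1-\beta)^2}\sum_{r=1}^t\|\vg_r/\vb_r\|^2 \le \frac{\eta^2 d}{(1-\beta)^2}\log\mF_T$, by swapping the order of summation and summing the geometric tail.

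The main obstacle is bookkeeping rather than conceptual: one must choose $\mF_T$ in \eqref{eq:define_poly_F} large enough to simultaneously dominate $1+v_{t,i}/\ep^2$ for every coordinate and every $t\le T$ — which forces $\mF_T$ to carry the $A$-dependent cubic term from Lemma \ref{lem:delta_rough}, the $B_1=B+1$ and $L$ dependence, the $\ep^{-2}$ factor, and the $d,(1-\beta)^{-1}$ powers — while keeping it a genuine polynomial in $T$ so that $\log\mF_T = \tilde{\mO}(\log T)$. Care is also needed that the a priori bounds used here (Lemmas \ref{lem:estimation_rough}, \ref{lem:delta_rough}) are unconditional, so no circularity arises with the induction in Proposition \ref{pro:delta_s}.
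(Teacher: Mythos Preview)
Your proposal is correct and follows essentially the same strategy as the paper: apply the log-sum inequality coordinate-wise to bound $\sum_s \gsi^2/\bsi^2$, control $v_{t,i}$ via Assumption (A3) plus the rough a~priori estimates, and then handle the momentum terms by expanding $\vm_s$ and summing the geometric weights. The only substantive deviation is in how you bound $\sum_s\|\bar\vg_s\|^2$: the paper absorbs it into the function-gap sum via Lemma~\ref{lem:gradient_delta_s} ($\|\bar\vg_s\|^2\le 2L\delx_s$), so that everything is controlled by $X\sum_s\delx_s + 2CT$ with $X=2A+4LB+4L$, and then invokes Lemma~\ref{lem:delta_rough} once --- this is precisely what produces the specific polynomial $\mF_T$ in \eqref{eq:define_poly_F}. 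Your route, bounding $\sum_s\|\bar\vg_s\|^2$ separately through Lemma~\ref{lem:estimation_rough}, yields a different (though equally valid) polynomial, so your parenthetical claim that it is ``exactly the $\mF_T$ referenced in \eqref{eq:define_poly_F}'' is not literally true; if you want to match the paper's $\mF_T$ you should use $\|\bar\vg_s\|^2\le 2L\delx_s$ instead. For the momentum bounds, the paper carries out the Cauchy--Schwarz step coordinate-wise and reapplies the log-sum lemma at that level, whereas you work directly with the vector norm and plug in the already-established first inequality --- both arguments are valid and give the same final bound.
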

% \begin{lemma}\label{lem:sum_2}
%     Given $T \ge 1$ and $\beta \in [0,1)$, let $\mF_T$ be as in Lemma \ref{lem:sum_1}. Then for any $1  \le t \le T$,
%     \begin{align*}
%         & \quad .
%     \end{align*}
% \end{lemma}
%\begin{align}
%	& + \frac{ L\|\vm_t \|^2}{2(1-\beta)^2} 
%	+ \frac{3 \sqrt{2C } \eta }{1-\beta} \log \left(\frac{T}{\delta} \right) \nonumber\\
%	&+ \left(\frac{ \sqrt{2C }\eta}{1-\beta}+\tL \right)\sum_{s=1}^t  \left\|\frac{\vg_s}{\vb_s} \right\|^2+ \sum_{s=1}^t \frac{L \|\vm_{s-1}\|^2}{2(1-\beta)} \nonumber\\
%	&+ \frac{18X\eta^2}{(1-\beta)^2}\log^2\left(\frac{T}{\delta} \right) +\frac{2X\eta^2}{(1-\beta)^2}\left( \sum_{s=1}^t  \left\|\frac{\vg_s}{\vb_s} \right\|^2\right)^2\\
%	&= 
%	+ \frac{6 \sqrt{2C } \eta }{1-\beta} \log \left(\frac{T}{\delta} \right) \nonumber\\
%	&+2 \left(\frac{ \sqrt{2C }\eta}{1-\beta}+ \frac{ \eta^2L}{(1-\beta)^3}+\tL \right)d\log \mF_T \nonumber\\
%	&+ \frac{36X\eta^2}{(1-\beta)^2}\log^2\left(\frac{T}{\delta} \right) +\frac{4X\eta^2}{(1-\beta)^2}d^2\log^2 \mF_T\\
%\end{align}
Compared with Lemma \ref{lem:estimation_rough}, Lemma \ref{lem:sum_1} improves the dependency to $1-\beta$ for estimating $\|\vm_t\|^2$, which leads to the $\mO((1-\beta)^{-1})$ order for the final convergence as in Remark \ref{rem:beta}. Thus, applying Lemma \ref{lem:sum_1} over \eqref{eq:final_3}, and then combining with $\del$ in \eqref{eq:define_tD},
    % \begin{align*}
    %     &\frac{\delx_{t+1}}{L+1}
    %     \le \frac{\del}{2(L+1)}+ \delx_1 \\
    %     &- \frac{\eta}{2(1-\beta)} \sum_{s=1}^t\left\|\frac{\bar{\vg}_s}{\sqrt{\va_s}} \right\|^2+ \frac{3 \sqrt{2C } \eta d}{1-\beta} \log \left(\frac{dT}{\delta} \right) \\
    %     & + \left(\frac{ \sqrt{2C }\eta}{1-\beta}+\tL+\frac{(L+1)\eta^2}{2(1-\beta)^3} \right)d\log \mF_T  \\
    %     &+ \frac{9X(L+1)\eta^2 d^2}{(1-\beta)^2}\log^2\left(\frac{dT}{\delta} \right)+\frac{X(L+1)\eta^2 d^2\log^2\mF_T}{(1-\beta)^2}.
    % \end{align*}
    \begin{align*}
        \frac{\delx_{t+1}}{2} &\le \frac{\del}{2}- \frac{\eta}{2(1-\beta)} \sum_{s=1}^t\left\|\frac{\bar{\vg}_s}{\sqrt{\va_s}} \right\|^2 \leq {\Delta \over 2}.
%        \\
%        &\le \tdel- \frac{\eta}{2(1-\beta)} \sum_{s=1}^t\left\|\frac{\bar{\vg}_s}{\sqrt{\va_s}} \right\|^2 \le \tdel.
    \end{align*}
%    Then, we obtain $\delx_{t+1} \le L_1\tdel = \del$. 
    The induction is complete and the desired result in \eqref{eq:pro_1} is proved. Finally, as an intermediate result, we verify \eqref{eq:pro_2}.
%\end{proof}
\subsection{Proof of the main result}
Based on Proposition \ref{pro:delta_s}, we are able to prove Theorem \ref{thm:1}.
\begin{proof}[Proof of Theorem \ref{thm:1}]
In what follows, we will obtain the final convergence result based on \eqref{eq:pro_1} and \eqref{eq:pro_2}. Since \eqref{eq:pro_1} and \eqref{eq:pro_2} hold with probability at least $1-\delta$, the final convergence result then holds with probability at least $1-\delta$.
Let us first set $t = T$ in \eqref{eq:pro_2}, and we get
\begin{align}
     \frac{\eta}{1-\beta} \sum_{s=1}^T\frac{\|\bar{\vg}_s\|^2}{\|\va_s\|_{\infty}}  \le \frac{\eta}{1-\beta} \sum_{s=1}^T\left\|\frac{\bar{\vg}_s}{\sqrt{\va_s}} \right\|^2 \le \del. \label{eq:final_2}
\end{align}
Using \eqref{eq:proxy_stepsize}, the basic inequality and Assumption (A3), we have that for any $s \in [T]$, with $B_1=B+1,$
\begin{align}\label{eq:upper_bound_asi}
    &\|\va_s\|_{\infty}  -\epsilon 
    \le \max_{i \in [d]}\sqrt{v_{s-1,i}+\mG_s^2} = \max_{i \in [d]}\sqrt{\sum_{j=1}^{s-1} g_{j,i}^2 +\mG_s^2} \nonumber \\
    &\le \sqrt{\sum_{j=1}^{s-1} \|\vg_{j}\|^2 +\mG_s^2} \le \sqrt{2\sum_{j=1}^{s-1} (\|\bar{\vg}_{j}\|^2 + \|\vxi_j\|^2) +\mG_s^2}\nonumber \\
    &\le \sqrt{2\sum_{j=1}^{s-1} (A \delx_j + B_1  \|\bar{\vg}_{j}\|^2 + C) +\mG_s^2} .
\end{align}
Further applying \eqref{eq:pro_1} where $\delx_s \le \del,\mG_s \le \mG, \forall s \in [T]$,
\begin{align*}
    \|\va_s\|_{\infty}  -\epsilon
    &\le \sqrt{2B_1\sum_{s=1}^{T} \|\bar{\vg}_s\|^2 + 2(A\del+C)T +\mG^2 }.  
\end{align*}
Combining with \eqref{eq:final_2}, using $\del_1 = \del(1-\beta)/\eta$, then applying Young's inequality,
\begin{align*}
    &\quad\sum_{s=1}^T\|\bar{\vg}_s\|^2 - \del_1 \epsilon \\
    &\le \del_1\left(\sqrt{2B_1\sum_{s=1}^{T} \|\bar{\vg}_s\|^2}+\sqrt{  2(A\del+C)T} + \mG \right)\\
    &\le \sum_{s=1}^T\frac{\|\bar{\vg}_s\|^2}{2} + \del_1^2B_1+\del_1\left(\sqrt{  2(A\del+C)T} + \mG \right).
\end{align*}
We then re-arrange the order and divide $T$ on both sides, leading to a desired convergence result
\begin{align*}
   \frac{1}{T}\sum_{s=1}^T\|\bar{\vg}_s\|^2 \le 2\del_1\left[\frac{\del_1{B_1} + \mG + \epsilon}{T} + \sqrt{\frac{2(A\del+C)}{T}} \right].
 \end{align*}
 The proof is complete.
\end{proof}

\section{Convergence under generalized smoothness}\label{sec:general}
In this section, we present the convergence of Algorithm \ref{alg:AdaGrad} in the generalized smooth case.  
\subsection{Generalized smoothness}
For a differentiable objective function $f: \mR^d \rightarrow \mR$, we consider the following $(L_0,L_1)$-smoothness condition: there exist
 constants $L_0,L_1 > 0$, satisfying that for any $\vx,\vy \in \mR^d$ with $\|\vx-\vy\| \le 1/L_1$, 
\begin{equation}\label{eq:general_smooth_1}
    \|\nabla f(\vy) - \nabla f(\vx) \| \le \left(L_0 + L_1\|\nabla f(\vx)\| \right) \|\vx-\vy\|.
\end{equation}
   The generalized smooth condition was originally put forward by \citep{zhang2020why} for any twice-order differentiable function $f$ satisfying that
    \begin{align}\label{eq:general_smooth_2}
        \|\nabla^2 f(\vx)\| \le L_0 + L_{1} \|\nabla f(\vx)\|.
    \end{align}
    They revealed the superior of SGD with gradient-clipping in convergence over the vanilla SGD when considering \eqref{eq:general_smooth_2}. Moreover, empirical evidence has demonstrated that numerous objective functions satisfy \eqref{eq:general_smooth_2} while deviating from the global smoothness, particularly in large language models, see e.g., \citep[Figure 1]{zhang2020why} and \citep{crawshaw2022robustness}. To better explain the convergence of gradient-clipping algorithms, \cite{zhang2020improved} provided an alternative form in \eqref{eq:general_smooth_1}, only requiring $f$ to be first-order differentiable. 
    
    The condition in \eqref{eq:general_smooth_1} is selected in our paper for three main reasons. First, it's easy to verify that \eqref{eq:general_smooth_1} is strictly weaker than $L$-smoothness. A concrete example is that the simple function $f(x)=x^3, x\in \mR$ does not satisfy any global smoothness but \eqref{eq:general_smooth_1}. Second, \eqref{eq:general_smooth_1} aligns with the practical limitation to first-order stochastic gradients in our setting, making it more reasonable to assume that $f$ is only first-order differentiable. Finally, both \eqref{eq:general_smooth_1} and \eqref{eq:general_smooth_2} are shown to be equivalent up to constant factors for twice-order differentiable functions, see \citep[Lemma A.2]{zhang2020improved} and \citep[Proposition 1]{faw2023beyond}. Thus, \eqref{eq:general_smooth_1} includes more functions than \eqref{eq:general_smooth_2}. We refer interested readers to see \citep{zhang2020why,zhang2020improved,faw2023beyond} for more discussions and concrete examples of the generalized smoothness.

\subsection{Convergence result}
In the following, we establish the convergence bound for AdaGrad with momentum under the generalized smooth condition.
\begin{theorem}\label{thm:general_smooth}
    Let $T \ge 1$ and $\delta \in (0,1)$. Suppose that $\{\vx_s\}_{s \in [T]}$ is a sequence generated by Algorithm \ref{alg:AdaGrad}, $f$ is $(L_0,L_{1})$-smooth satisfying \eqref{eq:general_smooth_1}, Assumptions (A1), (A2), (A3) hold, and the hyper-parameters satisfy that $\beta \in [0,1)$,
    \begin{align}\label{eq:general_parameter}
        &\ep > 0,\quad \eta \le \min\left\{C_0, \frac{C_0}{\mH}, \frac{C_0}{\mL},  \frac{(1-\beta)^2}{L_{1}\sqrt{d}}\right\},
    \end{align}
    where $C_0 > 0$ is a constant, $\mH,\mL$ are defined as 
    \begin{equation}\label{eq:define_H_L}
        \begin{split}
        &\mH = \sqrt{2A \lam_x + 2(B+1)\left(4L_1\lam_x + \sqrt{4L_0\lam_x}\right)^2 + 2C}, \\
        &\mL =2L_0 +2L_{1}\left(4L_1\lam_x + \sqrt{4L_0\lam_x}\right), 
        \end{split}
    \end{equation}
    and $\lam_y,\tilde{\lam}_y,\lam_x$ are given with the following order,\footnote{The detailed expressions of $\lam_y,\lam_x$ could be found in \eqref{eq:define_lamy} and \eqref{eq:define_lamx} respectively from Appendix.}
    \begin{align*}
        &\lam_y \sim \mO\left( \delx_1 + \frac{C_0^2d +C_0d}{(1-\beta)^3}\log\left( \frac{T}{\delta} + \frac{T}{\ep^2}\right)  \right), \\
        &\tilde{\lam}_y = \frac{2\lam_y(1-\beta)}{\eta},\quad \lam_x \sim \mO\left( \lam_y^2 \right).
    \end{align*}
    Then, with $B_1=B+1$, it holds that with probability at least $1-\delta$,
    \begin{align*}
        &\frac{1}{T}\sum_{s=1}^T\|\nabla f(\vx_s)\|^2 \\
        \le &\mO\left(\tilde{\lam}_y\left( \frac{B_1\tilde{\lam}_y +\sqrt{B_1L\lam_x} + \ep}{T}+\sqrt{\frac{A\lam_x+C}{T}} \right)\right).
    \end{align*}
\end{theorem}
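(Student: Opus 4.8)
The plan is to mimic the proof of Theorem~\ref{thm:1} but replace every use of global $L$-smoothness by the $(L_0,L_1)$-smoothness in \eqref{eq:general_smooth_1}, which forces two new ingredients: a uniform a~priori bound on the gradient norm along the trajectory, and a careful verification that the per-step displacements stay inside the radius $1/L_1$ where \eqref{eq:general_smooth_1} applies. First I would re-examine Lemma~\ref{lem:estimation_rough}: the bound $\|\vm_s\|\le \eta\sqrt{d}/(1-\beta)$ is purely algorithmic and survives verbatim, and the constraint $\eta\le (1-\beta)^2/(L_1\sqrt{d})$ in \eqref{eq:general_parameter} is exactly what guarantees $\|\vy_{s+1}-\vy_s\| = \tfrac{\eta}{1-\beta}\|\vg_s/\vb_s\| \le \tfrac{\eta\sqrt d}{1-\beta}\le 1/L_1$ (and likewise $\|\vx_{s+1}-\vx_s\|\le 1/L_1$), so the local descent inequality $\dely_{s+1}\le \dely_s + \langle\nabla f(\vy_s),\vy_{s+1}-\vy_s\rangle + \tfrac12(L_0+L_1\|\nabla f(\vy_s)\|)\|\vy_{s+1}-\vy_s\|^2$ is legitimate. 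The term $L_1\|\nabla f(\vy_s)\|$ is the new difficulty: it must be controlled by a running bound on $\|\nabla f(\vy_s)\|$, which is where $\lam_x$ (an $\mO(\lam_y^2)$ proxy bound on $\delx_s$, hence via $(L_0,L_1)$-smoothness on $\|\nabla f(\vx_s)\|^2$) enters.

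Next I would set up the same descent-with-telescoping decomposition as in Section~\ref{sec:proof}: define the auxiliary sequence $\vy_s$ exactly as in \eqref{eq:define_y_s}--\eqref{eq:y_iterative}, introduce a generalized-smoothness analogue of $\mG_s$ — roughly $\mG_s = \sqrt{X'\delx_s + 2C}$ with $X'$ now built from $A$, $B$, $L_0$, $L_1$ and the a~priori gradient bound rather than from $L$ alone — and reuse the proxy step-sizes $\va_s$ in \eqref{eq:proxy_stepsize}. Lemmas~\ref{lem:1_bounded}, \ref{lem:gap_as_bs}, \ref{lem:A.1.2}, \ref{lem:sum_1} are statements about the noise and the adaptive denominators and carry over unchanged; only Lemma~\ref{lem:A.2} and Lemma~\ref{lem:delta_y_x}, which invoke smoothness to relate $f(\vy_s)$ and $f(\vx_s)$ and to bound ${\bf A.2}$, need to be redone with the local inequality, picking up extra $L_1\|\nabla f\|$ factors that are absorbed once the gradient bound is in place. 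The constants $\mH$ and $\mL$ in \eqref{eq:define_H_L} are precisely these absorbed quantities: $\mH$ plays the role that $\mG$/$\sqrt{X\Delta}$ played before, and $\mL$ is the effective smoothness constant $2L_0 + 2L_1\cdot(\text{grad bound})$; the hypotheses $\eta\le C_0/\mH$ and $\eta\le C_0/\mL$ are what keep all error terms of constant order after substitution.

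The heart of the argument, and the step I expect to be the main obstacle, is the induction establishing the analogue of Proposition~\ref{pro:delta_s}: one must show simultaneously that $\delx_t\le\lam_x$ (equivalently $\dely_t\le\lam_y$) and $\sum_{s\le t}\|\bar\vg_s/\sqrt{\va_s}\|^2$ is controlled, \emph{and} feed the resulting bound $\|\nabla f(\vx_s)\|\le 4L_1\lam_x + \sqrt{4L_0\lam_x}$ (the quantity appearing inside $\mH$ and $\mL$) back into the local smoothness step — so the induction is genuinely circular and the parameter constraints must be tuned so the self-reference closes. Concretely I would: assume $\delx_s\le\lam_x$ for $s\le t$, derive the gradient bound via $(L_0,L_1)$-smoothness applied between $\vx_s$ and a point within $1/L_1$ of the minimizer region (or more simply via the standard $\|\nabla f(\vx)\|^2 \le$ (const)$\cdot(L_0 + L_1\|\nabla f(\vx)\|)\delx_s$ type estimate that \eqref{eq:general_smooth_1} yields), substitute into \eqref{eq:final_1}'s analogue, apply Lemma~\ref{lem:sum_1} and Young's inequality exactly as in \eqref{eq:final_3}, and check that the hyper-parameter budget \eqref{eq:general_parameter} makes the right-hand side $\le\lam_x$, closing the induction. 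Once \eqref{eq:pro_2}'s analogue $\delx_{t+1}\le \lam_y - \tfrac{\eta}{1-\beta}\sum_{s\le t}\|\bar\vg_s/\sqrt{\va_s}\|^2$ is in hand, the final rate follows verbatim from the endgame of the proof of Theorem~\ref{thm:1}: bound $\|\va_s\|_\infty$ using \eqref{eq:upper_bound_asi} with $\delx_s\le\lam_x$, combine with $\sum_s \|\bar\vg_s\|^2/\|\va_s\|_\infty \le \tilde\lam_y$, and apply Young's inequality to get the claimed $\mO\!\big(\tilde\lam_y(\tfrac{B_1\tilde\lam_y + \sqrt{B_1 L\lam_x}+\ep}{T} + \sqrt{\tfrac{A\lam_x+C}{T}})\big)$ bound.
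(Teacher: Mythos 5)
Your proposal tracks the paper's proof of Theorem~\ref{thm:general_smooth} essentially step for step: impose $\eta\le(1-\beta)^2/(L_1\sqrt d)$ so that all per-step displacements stay within $1/L_1$ and the local descent inequality is valid, replace $\mG_s$ by the generalized analogue $\mH_s$ and introduce the effective smoothness constant $\mL_s$, redo the $\bf A.2$ and $\dely_s$-versus-$\delx_s$ lemmas under local smoothness, and close the circular induction (Proposition~\ref{pro:general_delta_s}) where the running bound $\delx_s\le\lam_x$ feeds the gradient bound $\|\nabla f(\vx_s)\|\le 4L_1\lam_x+\sqrt{4L_0\lam_x}$ back into $\mH$, $\mL$, before finishing with the same Young-inequality endgame as Theorem~\ref{thm:1}. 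This is the paper's argument; the only cosmetic inaccuracy is your remark that Lemma~\ref{lem:sum_1} carries over ``unchanged'' — it is reproved as Lemma~\ref{lem:general_sum_1} with a different polynomial inside the logarithm, but the mechanism is identical.
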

It's easy to verify that $\lam_y\sim \mathcal{O}(\log(T/\delta))$ and thereby $\lam_x,\mH,\mL \sim \mathcal{O}(\log^2(T/\delta))$. Then, from \eqref{eq:general_parameter}, when $T \gg d$, a typical setting is $\eta \sim \mO(\log^2(T/\delta))$. Moreover, the convergence rate is still adaptive to the noise parameters $A,C$ and requires problem-parameters to tune step-sizes potentially due to the relaxation of smoothness. 
The subsequent result for AdaGrad under \eqref{eq:general_smooth_1} could be directly deduced from Theorem \ref{thm:general_smooth} and will be presented in Appendix.

\section{Conclusion}
In this paper, we provide high probability convergence bounds for AdaGrad and its momentum variant under the non-convex smooth optimization. In particular, we consider a mild noise model incorporating affine variance noise and the expected smoothness. We rely on a new proxy step-size and some delicate estimations to derive the bound. Our findings reveal that without fined-tuning step-sizes by the smooth parameter and noise level, AdaGrad can find a stationary point with a rate of $\tilde{\mathcal{O}}(1/\sqrt{T})$, particularly accelerating to $\tilde{\mathcal{O}}(1/T)$ when specific noise parameters are sufficiently small. Furthermore, we extend our framework to the generalized smooth case that allows for unbounded smooth parameters, showing the same convergence rate, albeit that fined-tuning step-sizes
are required in the latter.

\paragraph{Limitation} Although AdaGrad plays an important role in the adaptive method field, several other adaptive methods including RMSProp, Adam and AdamW, may be preferred in some real applications. Therefore, it is also pertinent to study these algorithms under relaxed assumptions. In addition, it is still unknown whether similar convergence result could be also achieved under an expected version of Assumption (A3). Finally, as we study a new assumption over AdaGrad, it would be more beneficial to provide more experimental results to support the theoretical results.
% The result reveals that Algorithm \ref{alg:AdaGrad} obtains the same convergence rate as in the smooth case in Theorem \ref{thm:1}, but at the expense of requiring problem-parameters $L_0,L_1,A,B,C$ to tune step-sizes. The necessity of prior knowledge of problem-parameters was also indicated by the counter example in \citep{wang2023convergence} for AdaGrad.

\section*{Acknowledgments}
The authors would like to thank the reviewers and area chairs for their constructive comments. This work was supported in part by the National Key Research and Development Program of China under grant number 2021YFA1003500, and NSFC under grant numbers 11971427. We also thank Chenhao Yu very much on pointing out one error in the generalized-smooth analysis, and Mouxiang Chen for his great help with the experimental results.
The corresponding author is Junhong Lin.
\bibliography{ref}
\newpage
\onecolumn

\title{Revisiting Convergence of AdaGrad with Relaxed Assumptions\\(Supplementary Material)}
\maketitle

\appendix
\begin{table}[htbp]
\centering
\caption{Comparison of existing results with ours for AdaGrad/AdaGrad-Norm on non-convex smooth case}
\label{table}
\begin{threeparttable}
\begin{tabular}{cccccc}
\hline
                               & Alg. type  & Smooth        & Noise                                                                  & \begin{tabular}[c]{@{}c@{}}Unbounded\\ Gradients\end{tabular}                                & Conv. type   \\ \hline
\citep{li2019convergence} & $\text{Both}$      & $L$           & Sub-Gaussian                                                           &          -                                                         & w.h.p.       \\
\citep{ward2020adagrad}   & Scalar     & $L$           & Bounded                                                                &                    -                                               & $\mathbb{E}$ \\
\citep{kavis2022high}  & Scalar     & $L$           & Sub-Gaussian                                                           & \checkmark                                         & w.h.p.       \\
\citep{faw2022power}    & Scalar     & $L$           & Affine                                                                 & \checkmark                                         & $\mathbb{E}$ \\
\citep{wang2023convergence}   & Both       & $L$/$(L_0,L_1)$ & Affine                                                                 & \checkmark                                         & $\mathbb{E}$ \\
\citep{alina2023high}    & Both       & $L$           & \begin{tabular}[c]{@{}c@{}}Coordinate-wise\\ Sub-Gaussian\end{tabular} & \checkmark                                         & w.h.p.       \\
\citep{attia2023sgd}  & Scalar     & $L$           & Affine                                                                 & \checkmark                                         & w.h.p.       \\
\citep{faw2023beyond}    & Scalar     & $(L_0,L_1)$     & Affine                                                                 & \checkmark                                         & $\mathbb{E}$ \\
This paper, Thm. 1                         & Coordinate & $L$           & Relaxed Affine                                                              & \checkmark                                       & w.h.p.       \\
This paper, Thm. 2                         & Coordinate & $(L_0,L_1)$     & Relaxed Affine                                                             & \checkmark                                       & w.h.p.       \\ \hline
\end{tabular}
\begin{tablenotes}
    \item[a]  In the "Alg. type" column, "Scalar" refers to AdaGrad-Norm, "Coordinate" refers to AdaGrad, and "Both" refers to both algorithms. "Relaxed Affine" corresponds to Assumption (A3) in this paper. In the "Conv. type" column, "w.h.p." stands for the high probability convergence bound, and "$\mathbb{E}$" represents the expected convergence bound.
    \item[b] \cite{li2019convergence} studied a variant of AdaGrad/AdaGrad-Norm using a delayed step-size which is independent from the current stochastic gradient.
\end{tablenotes}
\end{threeparttable}
\end{table}
\section{Complementary lemmas}
Following \citep{li2020high,attia2023sgd}, we will first present several important technical lemmas. The first lemma is a standard result in the smooth-based optimization which will be used in our analysis motivated also by \citep{attia2023sgd,yusu2023}.
\begin{lemma}\label{lem:gradient_delta_s}
	Suppose that $f$ is $L$-smooth and Assumption (A1) holds. Then for any $\vx\in \mR^d$,
	\begin{align*}
	\|\nabla f(\vx) \|^2 \le 2L(f(\vx)-f^*).
	\end{align*}
\end{lemma}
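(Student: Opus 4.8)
The statement to prove is Lemma~\ref{lem:gradient_delta_s}: for an $L$-smooth function $f$ bounded below by $f^*$, one has $\|\nabla f(\vx)\|^2 \le 2L(f(\vx)-f^*)$ for all $\vx \in \mR^d$.

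\textbf{Approach.} The plan is to use the standard descent-lemma consequence of $L$-smoothness applied at the point $\vx$ in the direction of steepest descent, and then invoke the lower bound $f^* \le f(\vy)$ for the specific $\vy$ obtained by a gradient step from $\vx$. First I would fix $\vx$ and set $\vy = \vx - \frac{1}{L}\nabla f(\vx)$. Applying the $L$-smoothness inequality
\begin{align*}
f(\vy) - f(\vx) - \la \nabla f(\vx), \vy - \vx \ra \le \frac{L}{2}\|\vx - \vy\|^2
\end{align*}
with this choice of $\vy$ gives $f(\vy) \le f(\vx) - \frac{1}{L}\|\nabla f(\vx)\|^2 + \frac{L}{2}\cdot\frac{1}{L^2}\|\nabla f(\vx)\|^2 = f(\vx) - \frac{1}{2L}\|\nabla f(\vx)\|^2$.

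\textbf{Finishing.} Then I would use Assumption~(A1), namely $f(\vy) \ge f^*$, to conclude $f^* \le f(\vx) - \frac{1}{2L}\|\nabla f(\vx)\|^2$, i.e., $\frac{1}{2L}\|\nabla f(\vx)\|^2 \le f(\vx) - f^*$, and multiplying through by $2L$ yields the claim $\|\nabla f(\vx)\|^2 \le 2L(f(\vx)-f^*)$.

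\textbf{Main obstacle.} There is essentially no obstacle here; this is a textbook computation. The only point requiring a modicum of care is that the $L$-smoothness condition as stated in the paper is the one-sided "descent" form (an upper bound on $f(\vy)$), which is exactly what is needed for this direction; no symmetry or convexity is required. The choice of step size $1/L$ is what makes the quadratic $-t\|\nabla f(\vx)\|^2 + \frac{L t^2}{2}\|\nabla f(\vx)\|^2$ attain its minimum, giving the sharpest constant.
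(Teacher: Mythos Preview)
Your proof is correct and is exactly the standard textbook argument. The paper itself does not give a proof of this lemma---it simply states it as ``a standard result in the smooth-based optimization'' with citations---so your derivation via the gradient step $\vy = \vx - \tfrac{1}{L}\nabla f(\vx)$ combined with the descent inequality and the lower bound $f(\vy)\ge f^*$ is precisely what fills that gap.
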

We introduce a concentration inequality for the martingale difference sequence, see \citep{li2020high} for a proof.
\begin{lemma} \label{lem:Azuma}
    Suppose that $\{Z_s\}_{s \in [T]}$ is a martingale difference sequence with respect to $\zeta_1,\cdots,\zeta_T$. Assume that for each $s \in [T]$, $\sigma_s$ is a random variable dependent on $\zeta_1,\cdots,\zeta_{s-1}$ and satisfies that
    \begin{align*}
        \mE\left[ \exp\left(\frac{Z_s^2}{\sigma_s^2 }\right) \mid \zeta_1,\cdots,\zeta_{s-1} \right] \le \mathrm{e}.
    \end{align*}
    Then, for any $\lambda > 0$, and for any $\delta \in (0,1)$, it holds that
    \begin{align*}
        \mathbb{P}\left(\sum_{s=1}^T Z_s > \frac{1}{\lambda}\log \left({1\over \delta}\right) + \frac{3}{4}\lambda \sum_{s=1}^T \sigma_s^2 \right) \le \delta.
    \end{align*}
\end{lemma}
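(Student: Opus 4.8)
The plan is to prove this as a standard sub-Gaussian martingale tail bound via Chernoff's method with an exponential supermartingale (in the spirit of \citep{li2020high}). The only point requiring care beyond the textbook argument is that the per-step variance proxies $\sigma_s^2$ are \emph{random}; this is handled cleanly because each $\sigma_s$ is measurable with respect to $\zeta_1,\dots,\zeta_{s-1}$, so it can be pulled out of the relevant conditional expectation and absorbed into the supermartingale compensator.

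First I would establish a conditional moment generating function bound of the form $\mE[\exp(\lambda Z_s)\mid \zeta_1,\dots,\zeta_{s-1}]\le \exp(\tfrac34\lambda^2\sigma_s^2)$. The route is: use an elementary pointwise inequality of the type $\mathrm{e}^{u}\le u+\mathrm{e}^{cu^{2}}$ (valid for all real $u$ with $c=\tfrac34$) together with the martingale-difference property $\mE[Z_s\mid \zeta_1,\dots,\zeta_{s-1}]=0$ to get $\mE[\mathrm{e}^{\lambda Z_s}\mid\cdot]\le \mE[\mathrm{e}^{c\lambda^2 Z_s^2}\mid\cdot]$, and then apply the power-mean (Jensen) inequality $\mE[W^{p}\mid\cdot]\le(\mE[W\mid\cdot])^{p}$ for $p\in[0,1]$ with $W=\exp(Z_s^2/\sigma_s^2)$ and $p=c\lambda^2\sigma_s^2$, invoking the hypothesis $\mE[\exp(Z_s^2/\sigma_s^2)\mid\cdot]\le\mathrm{e}$; this gives $\mE[\mathrm{e}^{\lambda Z_s}\mid\cdot]\le \mathrm{e}^{c\lambda^2\sigma_s^2}$ with $c=\tfrac34$. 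Next, I would define the nonnegative process
\[
M_t=\exp\!\left(\lambda\sum_{s=1}^{t}Z_s-\tfrac34\lambda^2\sum_{s=1}^{t}\sigma_s^2\right),\qquad M_0=1,
\]
and verify it is a supermartingale: since $\sigma_t$ is $\zeta_1,\dots,\zeta_{t-1}$-measurable, $\mE[M_t\mid \zeta_1,\dots,\zeta_{t-1}]=M_{t-1}\exp(-\tfrac34\lambda^2\sigma_t^2)\,\mE[\mathrm{e}^{\lambda Z_t}\mid \zeta_1,\dots,\zeta_{t-1}]\le M_{t-1}$ by the step-1 bound, hence $\mE[M_T]\le M_0=1$. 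Finally, Markov's inequality yields $\mathbb{P}(M_T>1/\delta)\le\delta\,\mE[M_T]\le\delta$, and unwinding the event $\{M_T>1/\delta\}$, namely $\lambda\sum_{s=1}^T Z_s-\tfrac34\lambda^2\sum_{s=1}^T\sigma_s^2>\log(1/\delta)$, is exactly $\{\sum_{s=1}^T Z_s>\tfrac1\lambda\log(1/\delta)+\tfrac34\lambda\sum_{s=1}^T\sigma_s^2\}$ after dividing by $\lambda>0$, which is the claim.

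The main obstacle is step 1: extracting the sub-Gaussian conditional MGF bound with exactly the constant $\tfrac34$ from the ``$\exp(Z_s^2/\sigma_s^2)$ has conditional mean at most $\mathrm{e}$'' hypothesis. This needs the right pairing of an elementary exponential inequality with the power-mean inequality and some constant chasing, and in the strict reading it asks us to keep $\lambda$ in the range where $\tfrac34\lambda^2\sigma_s^2\le 1$ so that the power-mean step applies — a restriction that is harmless in every application here, since $\lambda$ is a fixed deterministic scale and $\sigma_s^2$ is controlled on the relevant event (e.g.\ in Lemma~\ref{lem:1_bounded} one has $\lambda^2\sigma_s^2$ bounded by a small universal constant on the event $\delx_s\le\del$). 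Once step 1 is in hand, the supermartingale property and the Markov step are entirely routine.
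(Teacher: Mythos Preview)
The paper does not give its own proof of this lemma; it simply defers to \citep{li2020high}. Your outline is exactly the standard Chernoff--supermartingale argument used there, and Steps~2--3 (the supermartingale construction and Markov's inequality) are correct as written.

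There is, however, a genuine gap in Step~1 as you have set it up. The power-mean/Jensen step $\mE[W^{p}\mid\cdot]\le(\mE[W\mid\cdot])^{p}$ needs $p\in[0,1]$, so your conditional MGF bound $\mE[\mathrm{e}^{\lambda Z_s}\mid\cdot]\le \mathrm{e}^{\frac34\lambda^2\sigma_s^2}$ is only established on the event $\tfrac34\lambda^2\sigma_s^2\le 1$. Since $\sigma_s$ is random, this is not a constraint on the deterministic scalar $\lambda$ alone, so the lemma as stated (``for any $\lambda>0$'') is not yet proved. Your defense---that $\lambda^2\sigma_s^2$ is small on the event $\delx_s\le\del$---is circular in the paper's logic: Lemma~\ref{lem:1_bounded} (which invokes the present lemma) is precisely the input to the induction in Proposition~\ref{pro:delta_s} that \emph{establishes} $\delx_s\le\del$. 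You cannot assume that event while proving the concentration inequality that produces it.

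The standard repair, and what the cited reference does, is a case split on the (conditionally constant) value of $\lambda\sigma_s$. For small $\lambda\sigma_s$ your route works verbatim. For large $\lambda\sigma_s$ one bypasses Jensen entirely via the pointwise Young inequality $\lambda Z_s\le Z_s^2/\sigma_s^2+\tfrac14\lambda^2\sigma_s^2$, which gives $\mE[\mathrm{e}^{\lambda Z_s}\mid\cdot]\le \mathrm{e}\cdot \mathrm{e}^{\lambda^2\sigma_s^2/4}\le \mathrm{e}^{\frac34\lambda^2\sigma_s^2}$ as soon as $\lambda^2\sigma_s^2\ge 2$; tightening the constant in your elementary inequality (e.g.\ using $\mathrm{e}^{u}\le u+\mathrm{e}^{9u^{2}/16}$ rather than $c=\tfrac34$) makes the two regimes overlap. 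With this patch, Step~1 holds for every $\lambda>0$ and the rest of your argument goes through unchanged.
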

The following lemma is a commonly used result in the analysis of adaptive methods. See \citep{levy2018online} for a proof.
\begin{lemma}\label{lem:log_sum}
    Let $\{\alpha_s\}_{s\ge 1}$ be a non-negative sequence. For any $\varepsilon > 0$ and positive integer $t$,
    \begin{align*}
        \sum_{s=1}^t \frac{\alpha_s}{\varepsilon+\sum_{k=1}^s \alpha_k} \le \log\left(1 + \frac{1}{\varepsilon}\sum_{s=1}^t \alpha_s \right).
    \end{align*}
\end{lemma}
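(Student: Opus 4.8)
The plan is to telescope a partial-sum decomposition. I would define $S_s := \varepsilon + \sum_{k=1}^s \alpha_k$ for $s \ge 0$, so that $S_0 = \varepsilon > 0$ and $S_s - S_{s-1} = \alpha_s \ge 0$; in particular $S_s \ge \varepsilon > 0$ for every $s$, which guarantees that all the quotients and logarithms appearing below are well-defined. The left-hand side of the claim then rewrites as $\sum_{s=1}^t \frac{\alpha_s}{S_s} = \sum_{s=1}^t \frac{S_s - S_{s-1}}{S_s} = \sum_{s=1}^t \bigl(1 - \frac{S_{s-1}}{S_s}\bigr)$.

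The key step is the scalar inequality $1 - 1/z \le \log z$, valid for all $z > 0$, applied with $z = S_s / S_{s-1} \ge 1$. This gives $1 - \frac{S_{s-1}}{S_s} \le \log\frac{S_s}{S_{s-1}}$ for each $s \in [t]$ (and the degenerate case $\alpha_s = 0$ is covered, since both sides are then $0$). Summing over $s = 1, \dots, t$ and telescoping the right-hand side yields $\sum_{s=1}^t \frac{\alpha_s}{S_s} \le \log\frac{S_t}{S_0} = \log\frac{\varepsilon + \sum_{k=1}^t \alpha_k}{\varepsilon} = \log\bigl(1 + \frac{1}{\varepsilon}\sum_{k=1}^t \alpha_k\bigr)$, which is exactly the asserted bound.

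There is no substantive obstacle here; the proof is elementary. The only two points needing a line of care are: (i) verifying $S_s > 0$ so that the manipulation makes sense, which is immediate from $\varepsilon > 0$ and $\alpha_s \ge 0$; and (ii) justifying the inequality $1 - 1/z \le \log z$, which one can either cite or prove in one line by observing that $h(z) := \log z - 1 + 1/z$ satisfies $h(1) = 0$ and $h'(z) = (z-1)/z^2$, so $h$ decreases on $(0,1)$ and increases on $(1,\infty)$, hence $h \ge 0$ everywhere.
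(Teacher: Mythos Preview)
Your proof is correct and is the standard telescoping argument for this inequality. The paper does not give its own proof but simply cites \citep{levy2018online}; your argument is precisely the elementary one typically found in that line of work, so there is nothing to contrast.
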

% \begin{lemma}\label{lem:gradient_delta_s}
% 	Suppose that $f$ is $L$-smooth and Assumption (A1) holds, then for any $\vx\in \mR^d$,
% 	\begin{align*}
% 	\|\nabla f(\vx) \|^2 \le 2L(f(\vx)-f^*).
% 	\end{align*}
% \end{lemma}

% Then, we introduce a concentration inequality for the martingale difference sequence that is useful for achieving the high probability bounds, see \cite{li2020high} for a proof.
% \begin{lemma} \label{lem:Azuma}
%     Suppose $\{Z_s\}_{s \in [T]}$ is a martingale difference sequence with respect to $\zeta_1,\cdots,\zeta_T$. Assume that for each $s \in [T]$, $\sigma_s$ is a random variable dependent on $\zeta_1,\cdots,\zeta_{s-1}$ and satisfies that
%     \begin{align*}
%         \mE\left[ \exp(Z_s^2/\sigma_s^2) \mid \zeta_1,\cdots,\zeta_{s-1} \right] \le \exp(1).
%     \end{align*}
%     Then for any $\lambda > 0$, and for any $\delta \in (0,1)$, it holds that
%     \begin{align*}
%         \mathbb{P}\left(\sum_{s=1}^T Z_s > \frac{1}{\lambda}\log \left({1\over \delta}\right) + \frac{3}{4}\lambda \sum_{s=1}^T \sigma_s^2 \right) \le \delta.
%     \end{align*}
% \end{lemma}

\section{Omitted proofs under smooth case}
In this section, we provide the missing detailed proofs for some results and lemmas used in the proof of Theorem \ref{thm:1}.

\begin{proof}[Proof of Remark \ref{rem:highpro}]
    Here, we prove the fourth point in Remark \ref{rem:highpro}. Let us fix horizon $T$ and denote $\gamma_{s} =  \frac{\|\vg_s-\bar{\vg}_s\|^2 }{ A\delx_s+B\|\bar{\vg}_s\|^2 + C }, \forall s \in [T]$. Then from the new assumption, we have
    \begin{align*}
       \mE_{\vz_s}[\exp\left( \gamma_s \right)] \le \mathrm{e},\quad \text{thus}, \quad \mE \left[\exp(\gamma_s) \right] \le \mathrm{e}.
    \end{align*}
    By Markov's inequality, for any $E \in \mR$,
    \begin{align*}
        \mathbb{P} \left( 
        \max_{s\in [T]} \gamma_s \geq E    
        \right)
        &= \mathbb{P} \left( 
        \exp\left(\max_{s\in [T]} \gamma_s \right) \geq \mathrm{e}^E   
        \right)  \leq \mathrm{e}^{-E} \mE\left[ \exp\left(\max_{s\in [T]} \gamma_s \right) \right] \le  \mathrm{e}^{-E} \mE\left[ \sum_{s=1}^T\exp\left( \gamma_s \right) \right] \leq \mathrm{e}^{-E} T \mathrm{e},
    \end{align*}
    which leads to that with probability at least $1-\delta$, 
    \begin{align}\label{assump:sub-Gaussian}
        \|\vxi_{s}\|^2 = \|\vg_s-\bar{\vg}_s\|^2 \leq\log{\left(\mathrm{e}T \over \delta \right)} \left(  A\delx_s+B\|\bar{\vg}_s\|^2 + C   \right), \quad \forall s \in [T].  
    \end{align}
    Compared \eqref{assump:sub-Gaussian} with Assumption (A3), an additional $\log T$ factor emerges. Hence, $\mG_s$ and $\mG$ defined in \eqref{eq:define_G_t} should be revised as 
    \begin{align}\label{eq:define_new_G_s}
        \mG_s = \sqrt{\left(X\delx_s + 2C \right)\log{\left(\mathrm{e}T \over \delta \right)}}, \quad \mG = \sqrt{\left(X\del + 2C \right)\log{\left(\mathrm{e}T \over \delta \right)}}.
    \end{align}
    Consequently, using the similar analysis in Lemma \ref{lem:1_bounded}, Lemma \ref{lem:A.1.2} and Lemma \ref{lem:A.2},
    we could reach \eqref{eq:final_1} with a new $\mG_s$ and $\mG$ in \eqref{eq:define_new_G_s}. Then, using a similar induction argument, we could deduce the final convergence rate. The additional logarithm factor will make no essential difference to the order of $\del,\del_1$ and the convergence rate in Theorem \ref{thm:1} up to logarithm factors.
\end{proof}

\begin{proof}[Proof of Lemma \ref{lem:estimation_rough}]
    Let us denote ${\bm \eta}_s = \eta/(\sqrt{\vv_s}+\vep)$.
    Recalling $\vm_0=0$ and $\vm_s$ in Algorithm \ref{alg:AdaGrad}, we have
    \begin{align}
        \vm_s = \beta \vm_{s-1} - {\bm \eta}_s \odot \vg_s= \beta^2 \vm_{s-2}-\beta {\bm \eta}_{s-1} \odot \vg_{s-1}-{\bm \eta}_s \odot\vg_s=\cdots = -\sum_{j=1}^s \beta^{s-j}{\bm \eta}_j \odot \vg_j. \label{eq:ms_iteration}
    \end{align}
    Note that $|g_{s,i}| \le \sqrt{\vsi},\forall i \in [d]$. We therefore verify that $\|\vg_s/\sqrt{\vv_s}\|\le \sqrt{d}\max_{i \in [d]}|\gsi/\vsi| \le \sqrt{d}$. Moreover,
    \begin{align}\label{eq:gs_gs-1}
        \|\vm_{s}\| \le \sum_{j=1}^s \beta^{s-j} \|{\bm \eta}_j \odot \vg_j\| \le \eta \sqrt{d}\sum_{j=1}^s \beta^{s-j} \left\|\frac{\vg_j}{\sqrt{\vv_j}}\right\|_{\infty} \le \frac{\eta\sqrt{d}}{1-\beta}.
    \end{align} 
    Using the smoothness of $f$,
    \begin{align*}
        \|\bar{\vg}_s \| \le \|\bar{\vg}_{s-1}\| + \|\bar{\vg}_s - \bar{\vg}_{s-1}\| \le \|\bar{\vg}_{s-1}\| + L \|\vx_s - \vx_{s-1}\| = \|\bar{\vg}_{s-1}\| + L\|\vm_{s-1}\|.
    \end{align*}
    Further using \eqref{eq:gs_gs-1},
    \begin{align*}
        \|\bar{\vg}_s \| \le \|\bar{\vg}_{s-1}\| + \frac{L\eta\sqrt{d}}{1-\beta} \le \|\bar{\vg}_1\| + \frac{L\eta s\sqrt{d}}{1-\beta}.    
    \end{align*}
\end{proof}

\begin{proof}[Proof of Lemma \ref{lem:delta_rough}]
    Recalling the iteration in Algorithm \ref{alg:AdaGrad} and then using the descent lemma,
    \begin{align*}
        f(\vx_{s+1})
        &\le f(\vx_s)+ \la \bar{\vg}_s, \vx_{s+1}-\vx_s \ra + \frac{L}{2} \|\vx_{s+1} - \vx_s\|^2 = f(\vx_s) + \la \bar{\vg}_s, \vm_s \ra + \frac{L}{2} \|\vm_s\|^2.
    \end{align*} 
    Using Cauchy-Schwarz inequality and Lemma \ref{lem:estimation_rough}, 
    \begin{align*}
        &\la \bar{\vg}_s, \vm_s \ra \le \|\bar{\vg}_s\| \cdot \|\vm_s\| \le \frac{\eta \sqrt{d}}{1-\beta}\left(\|\bar{\vg}_1\| + \frac{L\eta \sqrt{d}s}{1-\beta} \right),\quad \frac{L}{2}\|\vm_s\|^2 \le \frac{L\eta^2 d}{2(1-\beta)^2}.
    \end{align*}
    Combining with the above, we obtain that 
    \begin{align*}
        f(\vx_{s+1}) \le f(\vx_{s}) + \frac{\eta \sqrt{d}}{1-\beta}\left(\|\bar{\vg}_1\| + \frac{L\eta\sqrt{d}s}{1-\beta} \right) +  \frac{L\eta^2 d}{2(1-\beta)^2}.
    \end{align*}
    With both sides subtracting $f^*$ and summing up over $s \in [t]$, we obtain that
    \begin{align}\label{eq:delx_{t+1}_smooth}
        \delx_{t+1} 
        &\le \delx_1 +\frac{\eta \sqrt{d}}{1-\beta} \sum_{s=1}^t \left(\|\bar{\vg}_1\| + \frac{L\eta \sqrt{d}s}{1-\beta} \right) + \frac{L\eta^2 d t}{2(1-\beta)^2}.
    \end{align}
    We define $\sum_{a}^b = 0$ when $a < b$. 
    Then, we sum up both sides of \eqref{eq:delx_{t+1}_smooth} over $t \in [0,1,\cdots ,T-1]$ to obtain that
    \begin{align*}
        \sum_{t=1}^T \delx_{t}  
        &\le \sum_{t=0}^{T-1}\delx_1 + \frac{\eta \sqrt{d}}{1-\beta} \sum_{t=0}^{T-1}\sum_{s=1}^t \left(\|\bar{\vg}_1\| + \frac{L\eta \sqrt{d}s}{1-\beta} \right) + \sum_{t=0}^{T-1}\frac{L\eta^2 d t}{2(1-\beta)^2} \\
        &\le  \delx_1 T + \frac{\eta \sqrt{d}}{1-\beta} \sum_{t=1}^{T}\sum_{s=1}^t \left(\|\bar{\vg}_1\| + \frac{L\eta \sqrt{d}s}{1-\beta} \right) + \sum_{t=1}^{T}\frac{L\eta^2 d t}{2(1-\beta)^2} \\
        &\le \delx_1 T + \left(\frac{\eta \|\bar{\vg}_1\| \sqrt{d}}{1-\beta} + \frac{L\eta^2 d}{2(1-\beta)^2}\right) T^2 + \frac{L\eta^2 dT^3}{(1-\beta)^2} .
    \end{align*}
\end{proof}
\begin{proof}[Proof of Lemma \ref{lem:1_bounded}]
   Let $	X_{s} = - \left\la  \bar{\vg}_s,\frac{\vxi_s}{\va_s} \right \ra$ for any $s \in [T]$. Note that $\bar{\vg}_s$ and  $\va_s$ are random variables dependent on ${\bm z}_1,\cdots,{\bm z}_{s-1}$ and $\vxi_s$ is dependent on ${\bm z}_1,\cdots,{\bm z}_{s-1}, {\bm z}_s$.  It is easy to prove that $X_{s}$ is a martingale difference sequence since
    \begin{align*}
        &\mE\left[ X_{s} \mid \vz_1,\cdots ,\vz_{s-1}\right] = 
        -  \left\la  	 \bar{\vg}_s, \frac{\mE_{{\bm z}_s}  [\vxi_s]}{\va_s} \right\ra    = 0,
    \end{align*}
    where 
    %the second equality comes from that $\bar{\vg}_s,\va_s$ are dependent on $\vz_1,\cdots,\vz_{s-1}$ and 
    the last equality follows from Assumption (A2). Let
    \begin{align*}
        \zeta_{s} =  \left\|{\bar{\vg}_s \over \va_s}\right\| \sqrt{A \delx_s + B \|\bar{\vg}_s\|^2 + C},\quad \forall s \in [T].
    \end{align*}
    Similarly, $\zeta_{s}$ is a random variable dependent on $\vz_1,\cdots,\vz_{s-1}$. Using Cauchy-Schwarz inequality and Assumption (A3), we have
    \begin{align*}
        &\mE \left[\exp\left(\frac{X_{s}^2}{\zeta_{s}^2}\right) \mid \vz_1,\cdots,\vz_{s-1} \right] 
        \le \mE \left[ \exp\left(\frac{\| \vxi_s \|^2}{A \delx_s + B \|\bar{\vg}_s\|^2 + C} \right)\mid \vz_1,\cdots,\vz_{s-1} \right] \le \mathrm{e}.
    \end{align*}
    Therefore, given any fixed $t \in [T]$, applying Lemma \ref{lem:Azuma}, we have that for any $\lambda > 0$, with probability at least $1-\delta$,
    \begin{align}
        \sum_{s=1}^t X_{s}^2 
        &\le \frac{3\lambda}{4}\sum_{s=1}^t \zeta_{s}^2 + \frac{1}{\lambda} \log \left(\frac{1}{\delta} \right)= \frac{3\lambda }{4}\sum_{s=1}^t \sum_{i=1}^d\frac{ \bgsi^2}{\asi^2}\left(A \delx_s + B \|\bar{\vg}_s\|^2 + C \right)+ \frac{1}{\lambda} \log \left(\frac{1}{\delta} \right) \nonumber \\
        &\le \frac{3\lambda }{4}\sum_{s=1}^t \sum_{i=1}^d\frac{ \bgsi^2  \mG_s^2}{\asi^2} + \frac{1}{\lambda} \log \left(\frac{1}{\delta} \right) \le \frac{3\lambda }{4}\sum_{s=1}^t \sum_{i=1}^d \frac{ \bgsi^2  \mG_s }{\asi} + \frac{1}{\lambda} \log \left(\frac{1}{\delta} \right), \label{eq:martin_1}
    \end{align}
    where the second inequality follows from Lemma \ref{lem:gradient_delta_s} and \eqref{eq:define_G_t}. The last inequality follows from using $1/\asi \le 1/ \mG_s$, implied by \eqref{eq:proxy_stepsize}.
    We then obtain that for any $t \in [T]$, \eqref{eq:martin_1} holds with probability at least $1-\delta$. Therefore, for any fixed $\lambda > 0$, we can re-scale over $\delta$ and have that for all $t \in [T]$, with probability at least $1-\delta$,
    \begin{align}\label{eq:set_lambda}
        \sum_{s=1}^t X_{s}^2 \le \frac{3\lambda }{4}\sum_{s=1}^t \left\|\frac{ \bar{\vg}_s  }{\sqrt{\va_s}}\right\|^2 \mG_s + \frac{1}{\lambda} \log \left(\frac{T}{\delta} \right).
    \end{align}
%which is exactly 
%%    Then, summing over $i \in [d]$, we have
%    \begin{align*}
%        \sum_{s=1}^t- \left\langle \bar{\vg}_s, \frac{\vxi_s}{\va_s}\right\rangle \le \frac{3\lambda }{4}\sum_{s=1}^t \mG_s\left\|\frac{ \bar{\vg}_s  }{\sqrt{\va_s}}\right\|^2 + \frac{1}{\lambda} \log \left(\frac{dT}{\delta} \right).
%    \end{align*}
    Finally setting $\lambda = 1/(3  \mG)$, we obtain the desired result.
\end{proof} 

\begin{proof}[Proof of Lemma \ref{lem:gap_as_bs}]
Using the basic inequality, Assumption (A3) and Lemma \ref{lem:gradient_delta_s}, we have
     \begin{align*}
	         \|\vg_s\|^2 \le 2 \|\bar{\vg}_s\|^2 + 2 \|\vxi_s\|^2 \le 2A\delx_s + 2(B+1)\|\bar{\vg}_s\|^2 +2C 
	         \le (2A+4LB+4L)\delx_s +2C .
	     \end{align*}
     Thus, $\|\vg_s\| \le \mG_s,\forall s \ge 1$.
	Let  $\va_s = \sqrt{\tvv_s}+\vep$ where $\tvv_s=(\tilde{v}_{s,i})_i$. Note that $\tilde{v}_{s,i} \ge \mG_s$. Then, for any $i \in [d]$,
	\begin{align}
		&\left|\frac{1}{\asi} - \frac{1}{\bsi} \right| 
		= \frac{|\sqrt{\tilde{v}_{s,i}}-\sqrt{\vsi}|}{\asi\bsi} 
		=\frac{|\tilde{v}_{s,i}-\vsi|}{\asi\bsi(\sqrt{\tilde{v}_{s,i}}+\sqrt{\vsi})}
		= \frac{1}{\asi\bsi}\frac{|\mG_s^2 - \gsi^2|}{\sqrt{\vsi}+\sqrt{\tilde{v}_{s,i}}} \nonumber \\
		\le &\frac{1}{\asi\bsi}\frac{\mG_s^2}{\sqrt{\vsi}+\sqrt{\tilde{v}_{s,i}}} \le \frac{\mG_s}{\asi\bsi}.\label{eq:a-b}
	\end{align}
\end{proof}

\begin{proof}[Proof of Lemma \ref{lem:A.1.2}]
	Applying Lemma \ref{lem:gap_as_bs} and Young's inequality,
	\begin{align}
		{\bf  A.1.2} 
		&\le  \sum_{s=1}^t \sum_{i=1}^d   \left| \frac{1}{\asi} - \frac{1}{\bsi}\right| |\bgsi| |\gsi |
		\le  \sum_{s=1}^t \sum_{i=1}^d \frac{ \mG_s}{\asi\bsi}  |\bgsi| |\gsi | \le  \frac{1}{4}\sum_{s=1}^t \sum_{i=1}^d  \frac{\bgsi^2}{\asi} +   \sum_{s=1}^t \sum_{i=1}^d \frac{  \mG_s^2}{\asi} \frac{\gsi^2}{\bsi^2}\nonumber\\
		&\le  \frac{1}{4}\sum_{s=1}^t \sum_{i=1}^d  \frac{\bgsi^2}{\asi} +   \sum_{s=1}^t \sum_{i=1}^d {  \mG_s} \frac{\gsi^2}{\bsi^2}, \label{eq:estA12a}
	\end{align}
where we use $1/\asi \le 1/\mG_s$ for the last inequality. 
The proof is complete.
\end{proof}

%\begin{align}\label{eq:final_1exp}
%\mE	\dely_{t+1}
%	&\le \delx_1 - \frac{\eta}{4(1-\beta)}\mE \sum_{s=1}^t  \left\|\frac{\bar{\vg}_s}{\sqrt{\va_s}} \right\|^2  \nonumber \\
%	& + \frac{\eta}{1-\beta} \mE \sum_{s=1}^t \mG_s \left\|\frac{\vg_s}{\vb_s} \right\|^2 \nonumber\\
%	&+ \frac{L }{2(1-\beta)}\sum_{s=1}^t \mE \|\vm_{s-1}\|^2 + \tL\sum_{s=1}^t  \mE\left\|\frac{\vg_s}{\vb_s} \right\|^2,
%\end{align}
%
%
%  \begin{align}
%\mE[	{\bf A.1.2}] \le  \frac{3}{4} \mE\sum_{s=1}^t \left\|\frac{\bar{\vg}_s}{\sqrt{\va_s}} \right\|^2 + \mE \sum_{s=1}^t \mG_s \left\|\frac{\vg_s}{\vb_s} \right\|^2. \label{eq:A.1.2exptt}
%\end{align}

\begin{proof}[Proof of Lemma \ref{lem:delta_y_x}]
    Using the descent lemma of smoothness and \eqref{eq:define_y_s},
    \begin{align*}
        f(\vx_s) 
        &\le f(\vy_s) + \la \nabla f(\vy_s), \vx_s - \vy_s \ra + \frac{L}{2} \|\vx_s - \vy_s \|^2 \\
        &= f(\vy_s) - \frac{\beta}{1-\beta} \la \nabla f(\vy_s), \vx_s - \vx_{s-1} \ra + \frac{L\beta^2}{2(1-\beta)^2} \|\vx_s - \vx_{s-1} \|^2.
    \end{align*}
    Using Young's inequality and subtracting $f^*$ on both sides,
    \begin{align*}
        \delx_s &\le \dely_s + \frac{1}{2L}\|\nabla f(\vy_s)\|^2 + \frac{(L+L)\beta^2}{2(1-\beta)^2}\|\vx_s - \vx_{s-1} \|^2 \le 2\dely_s + \frac{L\|\vm_{s-1} \|^2}{(1-\beta)^2},
    \end{align*}
    where we apply Lemma \ref{lem:gradient_delta_s} and $\beta \in [0,1)$. Finally dividing $2$ on both sides and re-arranging the order, we obtain the desired result.
\end{proof}

\begin{proof}[Proof of Lemma \ref{lem:sum_1}]
	Recalling the definition of $\vb_s$ and $\vv_s$, and then using Lemma \ref{lem:log_sum},
	\begin{align}\label{eq:gs/bs}
		\sum_{s=1}^t \frac{\gsi^2}{\bsi^2} = \sum_{s=1}^t \frac{\gsi^2}{(\sqrt{\vsi}+\ep)^2} \le \sum_{s=1}^t \frac{\gsi^2}{\vsi+\ep^2} \le \log\left(1+\frac{1}{\ep^2}\sum_{s=1}^t \gsi^2  \right).
	\end{align}
	Using the basic inequality, Assumption (A3), and Lemma \ref{lem:gradient_delta_s},
	\begin{align}
		\sum_{j=1}^t g_{j,i}^2 &\le \sum_{j=1}^T g_{j,i}^2 
		\le 2\sum_{j=1}^T (\bar{g}_{j,i}^2 + \xi_{j,i}^2) \le 2\sum_{j=1}^T (\|\bar{\vg}_{j}\|^2 + \|\vxi_{j}\|^2)  \nonumber\\
		&\le 2 A \sum_{j=1}^T\delx_j + 2(B+1) \sum_{j=1}^T\|\bar{\vg}_j\|^2 + 2CT\le X\sum_{j=1}^T \delx_j + 2CT.\label{eq:analysis_poly_F_1}
	\end{align}
	Combining with \eqref{eq:gs/bs} and Lemma \ref{lem:delta_rough}, we obtain that 
	\begin{align}
		\sum_{s=1}^t \frac{\gsi^2}{\bsi^2} \le \log\left(1 + \frac{1}{\ep^2}\sum_{j=1}^T g_{j,i}^2\right) \le \log\left(1 + \frac{1}{\ep^2}\left(X\sum_{j=1}^T \delx_j + 2CT\right)\right) \le \log\mF_T,\label{eq:analysis_poly_F_2}
	\end{align}
	where $\mF_T$ is defined as
	\begin{align}\label{eq:define_poly_F}
		\mF_T:=  1 + \frac{1}{\ep^2}\left[ (\delx_1 X + 2C)   T + \left(\frac{\eta \|\bar{\vg}_1\| \sqrt{d}}{1-\beta} + \frac{L\eta^2 d}{2(1-\beta)^2}\right)X  T^2 + \frac{L\eta^2 dX  T^3}{(1-\beta)^2}\right].
	\end{align}
	Summing up \eqref{eq:analysis_poly_F_2} over $i \in [d]$, we prove the first desired result.
 
	Then we move to estimate terms related to $\vm_s$.
 Let  $M_s = \sum_{j=1}^s \beta^{s-j}$. For any $i \in [d]$, recalling \eqref{eq:ms_iteration} and using the convexity of the square function,
	\begin{align}
		\msi^2 = \eta^2\left(\sum_{j=1}^s \frac{\beta^{s-j} g_{j,i}}{\sqrt{v_{j,i}}+\ep} \right)^2 \le \eta^2 M_s\sum_{j=1}^s  \frac{\beta^{s-j}g_{j,i}^2}{(\sqrt{v_{j,i}}+\ep)^2} \le \eta^2M_s\sum_{j=1}^s  \frac{\beta^{s-j}g_{j,i}^2}{v_{j,i}+\ep^2}. \label{eq:bound_msi_middle}
	\end{align}
	Summing over $i \in [d]$, using $\beta < 1$ with $M_s \leq {1 \over 1-\beta}$, Lemma \ref{lem:log_sum} and $s \le T$,
	\begin{align}
		\|\vm_s\|^2 &\le \eta^2 M_s \sum_{i=1}^d \sum_{j=1}^s \frac{g_{j,i}^2}{\sqrt{\sum_{k=1}^j g_{k,i}^2}+\ep^2}  \le \frac{\eta^2}{1-\beta} \sum_{i=1}^d \log \left(1+\frac{1}{\ep^2}\sum_{j=1}^s g_{j,i}^2 \right).\label{eq:ms}
	\end{align}
	We also sum up \eqref{eq:bound_msi_middle} over $s \in [t]$ and apply Lemma \ref{lem:log_sum} to obtain that
	\begin{align*}
		\sum_{s=1}^t \msi^2 &\le \eta^2 \sum_{s=1}^t M_s \sum_{j=1}^s  \frac{\beta^{s-j}g_{j,i}^2}{v_{j,i}+\ep^2} =\eta^2 \sum_{j=1}^t \frac{g_{j,i}^2}{v_{j,i}+\ep^2} \sum_{s=j}^t M_s \beta^{s-j} \\
		& \le \frac{\eta^2}{(1-\beta)^2}\sum_{j=1}^t \frac{g_{j,i}^2}{v_{j,i}+\ep^2} \le \frac{\eta^2}{(1-\beta)^2}\log\left(1+\frac{1}{\ep^2}\sum_{j=1}^t g_{j,i}^2 \right).
	\end{align*}
	Summing over $i \in [d]$, we obtain that
	\begin{align}
		\sum_{s=1}^t \|\vm_s\|^2 \le \frac{\eta^2}{(1-\beta)^2} \sum_{i=1}^d \log\left(1+\frac{1}{\ep^2}\sum_{j=1}^t g_{j,i}^2 \right). \label{eq:sum_ms}
	\end{align}
	Finally, we could follow the similar analysis for \eqref{eq:analysis_poly_F_1} and \eqref{eq:analysis_poly_F_2} to deduce that the terms inside the logarithm operator in \eqref{eq:sum_ms} could be further bounded by $\mF_T$ and thereby verify the target results.
\end{proof}

\section{Proof of convergence under the generalized smoothness}\label{appendix:general}
In this section, we provide the detailed proof of Theorem \ref{thm:general_smooth}. We still follow all the notations defined in Section \ref{sec:proof}.
We shall first introduce two sequences $\{\mH_s\}_{s \ge 1}$ and $\{\mL_s \}_{s\ge1}$ as follows:
\begin{align}\label{eq:define_L_s}
    \mH_s = \sqrt{2A \delx_s + 2(B+1)\left(4L_1\delx_s + \sqrt{4L_0\delx_s}\right)^2+2C} ,\quad\mL_s =2L_0 +2L_{1}\left(4L_1\delx_s + \sqrt{4L_0\delx_s}\right).
\end{align}
\newcommand{\tva}{\tilde{\bm a}}
\newcommand{\tasi}{\tilde{a}_{s,i}}
As a consequence, we slightly change the proxy step-size $\va_s$ in \eqref{eq:proxy_stepsize} as 
\begin{align}\label{eq:general_proxy_stepsize}
    \tilde{\va}_s = \sqrt{  \vv_{s-1}+ \left( \mH_s{\bf 1}_d\right)^2} + \vep, \quad \forall s \in [T].
\end{align}
\subsection{Convergence of AdaGrad}
As a consequence of Theorem \ref{thm:general_smooth}, we obtain the following convergence bound for AdaGrad considering affine variance noise and the generalized smoothness.
\begin{corollary}\label{coro:general}
    Let $T \ge 1$ and $\delta \in (0,1)$. Suppose that $\{\vx_s\}_{s \in [T]}$ is a sequence generated by Algorithm \ref{alg:AdaGrad} with $\beta = 0$, $f$ is $(L_0,L_{1})$-smooth satisfying \eqref{eq:general_smooth_1}, Assumptions (A1), (A2) hold and Assumption (A3) holds with $A=0$, and the hyper-parameters follow the condition in \eqref{eq:general_parameter} with $\beta= 0$, $\mH,\mL$ follow the definitions in \eqref{eq:define_H_L}, 
        \begin{align*}
        &\lam_y \sim \mO\left( \delx_1 + C_0^2d\log\left( \frac{T}{\delta} + \frac{T}{\ep^2}\right)  \right), \quad \lam_x \sim \mO\left( \lam_y^2 \right).
    \end{align*}
    Then it holds that with probability at least $1-\delta$,
    \begin{align*}
        \frac{1}{T}\sum_{s=1}^T\|\nabla f(\vx_s)\|^2 \le \frac{4\lam_y}{\eta}\left( \frac{2\lam_y(B+1)/\eta+\mH + \ep}{T}+\sqrt{\frac{2C}{T}} \right).
    \end{align*}
\end{corollary}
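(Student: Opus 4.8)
Corollary~\ref{coro:general} follows directly from Theorem~\ref{thm:general_smooth} by setting $\beta=0$ and $A=0$ and retracing its proof with the universal constants kept explicit; I would present it in exactly that way. The substitution $\beta=0$ collapses Algorithm~\ref{alg:AdaGrad} to vanilla AdaGrad: the auxiliary sequence of \eqref{eq:define_y_s} degenerates to $\vy_s=\vx_s$ for every $s$ (so $\dely_s=\delx_s$ and Lemma~\ref{lem:delta_y_x} becomes trivial), $\vm_s=-\eta\vg_s/\vb_s$, and all factors $(1-\beta)^{-1}$ disappear. In particular $\tilde{\lam}_y=2\lam_y(1-\beta)/\eta$ becomes $2\lam_y/\eta$, the step-size condition \eqref{eq:general_parameter} reduces to $\eta\le\min\{C_0,\,C_0/\mH,\,C_0/\mL,\,1/(L_1\sqrt{d})\}$, and the expression for $\lam_y$ in Theorem~\ref{thm:general_smooth} reduces to $\mO(\delx_1+(C_0^2d+C_0d)\log(T/\delta+T/\ep^2))$, whose $C_0d$ summand is absorbed into $C_0^2 d$ (one may take $C_0\ge1$ without loss), matching the stated order $\lam_y\sim\mO(\delx_1+C_0^2d\log(T/\delta+T/\ep^2))$.

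The substitution $A=0$ turns Assumption~(A3) into $\|\vg_s-\bar{\vg}_s\|^2\le B\|\bar{\vg}_s\|^2+C$ almost surely, removes the $2A\delx_s$ term from $\mH_s$ in \eqref{eq:define_L_s}, and hence removes the $2A\lam_x$ term from $\mH$ in \eqref{eq:define_H_L}. With these two substitutions the proof of Theorem~\ref{thm:general_smooth} carries over verbatim: the descent inequality under \eqref{eq:general_smooth_1} applies to consecutive iterates since $\|\vx_{s+1}-\vx_s\|=\eta\|\vg_s/\vb_s\|\le\eta\sqrt{d}\le1/L_1$ by the simplified step-size bound; the generalized-smoothness analogue of Lemma~\ref{lem:gradient_delta_s}, i.e. $\|\nabla f(\vx)\|\le4L_1\delx+\sqrt{4L_0\delx}$, makes $\mH_s$ and $\mL_s$ meaningful; the proxy step-size $\tilde{\va}_s$ of \eqref{eq:general_proxy_stepsize} replaces $\va_s$ of \eqref{eq:proxy_stepsize}, and the analogues of Lemmas~\ref{lem:1_bounded}, \ref{lem:gap_as_bs}, \ref{lem:A.1.2} and \ref{lem:A.2} hold verbatim with $\mG_s$ replaced by $\mH_s$; and the induction of Proposition~\ref{pro:delta_s} yields, with probability at least $1-\delta$, that $\delx_t\le\lam_x$ and $\mH_t\le\mH$ for all $t$, together with $\eta\sum_{s=1}^{t}\|\bar{\vg}_s/\sqrt{\tilde{\va}_s}\|^2\le\mO(\lam_y)$.

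For the closing step I would mirror the ``Proof of the main result'' computation for Theorem~\ref{thm:1}: take $t=T$; using \eqref{eq:general_proxy_stepsize}, Assumption~(A3) with $A=0$, and the induction conclusions, bound $\|\tilde{\va}_s\|_\infty-\ep\le\sqrt{2(B+1)\sum_{s=1}^{T}\|\bar{\vg}_s\|^2+2CT+\mH^2}$ with $\mH\le\sqrt{2(B+1)}\bigl(4L_1\lam_x+\sqrt{4L_0\lam_x}\bigr)+\sqrt{2C}$; combine this with $\|\bar{\vg}_s\|^2/\|\tilde{\va}_s\|_\infty\le\|\bar{\vg}_s/\sqrt{\tilde{\va}_s}\|^2$ and the preceding bound to get $\sum_{s=1}^{T}\|\bar{\vg}_s\|^2\le\mO(\lam_y/\eta)\bigl(\ep+\sqrt{2(B+1)\sum_s\|\bar{\vg}_s\|^2+2CT+\mH^2}\bigr)$; and apply Young's inequality twice to absorb the $\sqrt{\sum_s\|\bar{\vg}_s\|^2}$ term into $\tfrac12\sum_s\|\bar{\vg}_s\|^2$. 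Rearranging and dividing by $T$, and tracking the constants to obtain the $4$, $2$ and $\sqrt{2}$, gives exactly $\frac1T\sum_{s=1}^{T}\|\nabla f(\vx_s)\|^2\le\frac{4\lam_y}{\eta}\bigl(\frac{2\lam_y(B+1)/\eta+\mH+\ep}{T}+\sqrt{2C/T}\bigr)$. I do not expect a genuine obstacle here, as all the substance lives in Theorem~\ref{thm:general_smooth}; the only points needing a little care are checking that \eqref{eq:general_parameter} with $\beta=0$ still keeps the iterate increments inside the $1/L_1$-ball where \eqref{eq:general_smooth_1} is valid, and the bookkeeping that converts the $\mO(\cdot)$ of Theorem~\ref{thm:general_smooth} into the explicit constants of the corollary.
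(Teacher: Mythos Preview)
Your proposal is correct and matches the paper's approach: the corollary is stated as an immediate consequence of Theorem~\ref{thm:general_smooth}, and indeed the explicit bound obtained at the very end of that proof,
\[
\frac{1}{T}\sum_{s=1}^T\|\bar{\vg}_s\|^2 \le 2\tilde{\lam}_y\left( \frac{\tilde{\lam}_y(B+1)+\mH + \ep}{T}+\sqrt{\frac{2(A\lam_x+C)}{T}} \right),
\]
reduces exactly to the claimed inequality upon setting $\beta=0$ (so $\tilde{\lam}_y=2\lam_y/\eta$) and $A=0$. Your remark that the $C_0d$ term in $\lam_y$ is absorbed into $C_0^2d$ (e.g.\ by taking $C_0\ge 1$) is the only bookkeeping needed beyond direct substitution.
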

\begin{remark}\cite{wang2023convergence} provided a convergence rate for AdaGrad-Norm under the generalized smoothness with the expected affine variance noise, specifically when $\eta < \frac{1}{L_1}\min\left\{ \frac{1}{64B},\frac{1}{8\sqrt{B}}\right\}$, with probability at least $1-\delta,$
\begin{align}
    \min_{t \in [T]} \|\nabla f(\vx_t)\|^2 = \mO\left( \frac{\log(\sqrt{CT})}{T\delta^2}+ \frac{\sqrt{C}\log(\sqrt{CT})}{\sqrt{T}\delta^2} \right). \label{eq:adagrad-norm}
\end{align}
Thus, our convergence bound in Corollary \ref{coro:general} could reduce to the AdaGrad-Norm case and match the rate in \eqref{eq:adagrad-norm} up to logarithm factors, while with a better dependency on the probability margin $\delta$.
\end{remark}

\subsection{Technical lemmas}
We provide an equivalent result in \citep[Lemma A.2]{zhang2020improved}, which establishes a different relationship of the gradient norm and the function value gap. We refer to the proof of \citep[Lemma A.2]{zhang2020improved}.
\begin{lemma}\label{lem:general_gradient_value}
    Suppose that $f$ is $(L_0,L_{1})$-smooth and Assumption (A1) holds. Then, for any $\vx \in \mR^d$,
    \begin{align*}
        \|\nabla f(\vx) \| \le \max \left\{ 4L_{1}(f(\vx)- f^*),\sqrt{4L_{0}(f(\vx)- f^*)}  \right\}.
    \end{align*}
\end{lemma}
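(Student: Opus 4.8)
The plan is to mimic the classical argument behind Lemma \ref{lem:gradient_delta_s}, which derives $\|\nabla f(\vx)\|^2 \le 2L(f(\vx)-f^*)$ from the global descent lemma by taking a gradient step of length $1/L$. Here I replace the global descent lemma by a \emph{local} version adapted to \eqref{eq:general_smooth_1}, and I compensate for the locality restriction $\|\vx-\vy\|\le 1/L_1$ by choosing the step-size adaptively to $\|\nabla f(\vx)\|$.

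First I would establish a generalized descent inequality: for any $\vx,\vy\in\mR^d$ with $\|\vx-\vy\|\le 1/L_1$,
\begin{align*}
f(\vy)\le f(\vx)+\la \nabla f(\vx),\vy-\vx\ra + \frac{L_0+L_1\|\nabla f(\vx)\|}{2}\|\vx-\vy\|^2 .
\end{align*}
This follows by writing $f(\vy)-f(\vx)-\la\nabla f(\vx),\vy-\vx\ra=\int_0^1\la\nabla f(\vx+t(\vy-\vx))-\nabla f(\vx),\vy-\vx\ra\,dt$, observing that every point $\vx+t(\vy-\vx)$ with $t\in[0,1]$ stays within distance $\|\vx-\vy\|\le 1/L_1$ of $\vx$ so that \eqref{eq:general_smooth_1} bounds the integrand by $(L_0+L_1\|\nabla f(\vx)\|)\,t\,\|\vx-\vy\|^2$, and integrating over $t$.

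Next, fixing $\vx$ and writing $g=\|\nabla f(\vx)\|$ and $\del=f(\vx)-f^*\ge 0$, I would apply this inequality at $\vy=\vx-\gamma\nabla f(\vx)$ for a step-size $\gamma>0$ to be chosen, using (A1) in the form $f^*\le f(\vy)$; as long as $\gamma g\le 1/L_1$ this gives $0\le\del-\gamma g^2+\tfrac12(L_0+L_1 g)\gamma^2 g^2$. I then split on the sign of $L_0-L_1 g$: if $L_1 g\le L_0$, take $\gamma=1/(2L_0)$ — admissible since then $\gamma g\le 1/L_1$ — and bound $L_0+L_1 g\le 2L_0$ to obtain $0\le\del-g^2/(4L_0)$, hence $g\le\sqrt{4L_0\del}$; if $L_1 g>L_0$, take $\gamma=1/(2L_1 g)$ so that $\gamma g=1/(2L_1)\le 1/L_1$, and bound $L_0+L_1 g\le 2L_1 g$ to obtain $0\le\del-g/(4L_1)$, hence $g\le 4L_1\del$. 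In both cases $g\le\max\{4L_1\del,\sqrt{4L_0\del}\}$, which is the claim.

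The only subtle point is the step-length constraint $\|\vx-\vy\|\le 1/L_1$ built into \eqref{eq:general_smooth_1}: a single universal step-size overshoots this ball once $\|\nabla f(\vx)\|$ is large, which is precisely why the step in the large-gradient regime must be scaled by $1/\|\nabla f(\vx)\|$; the remaining work is just a one-line minimization of the resulting quadratic in $\gamma$. This reproduces \citep[Lemma A.2]{zhang2020improved}.
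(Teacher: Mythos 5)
Your proof is correct, and it reproduces the standard argument from \citet[Lemma A.2]{zhang2020improved}, to which the paper itself defers without restating the details. The key points — the local descent inequality, the case split on whether $L_1\|\nabla f(\vx)\|\lessgtr L_0$, and the adaptively scaled step $\gamma=1/(2L_1\|\nabla f(\vx)\|)$ in the large-gradient regime to respect the $\|\vx-\vy\|\le 1/L_1$ constraint — are exactly right, and each numerical step checks out (including the degenerate $\|\nabla f(\vx)\|=0$ case, where the claim is trivial).
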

We also have the following lemma to ensure the distance of $\vy_s$ and $\vx_s$ within $1/L_1$ in order to ensure the generalized smoothness in \eqref{eq:general_smooth_1}.
\begin{lemma}\label{lem:gap_xs_ys}
    Let $\vx_s,\vy_s$ be as in Algorithm \ref{alg:AdaGrad} and \eqref{eq:define_y_s}. If $\beta \in [0,1)$, then for any $s \ge 1$,
    \begin{align}
        \max\{\|\vx_{s+1} - \vx_{s}\|,\|\vy_s - \vx_s\|,\|\vy_{s+1} - \vy_s\| \} \le \frac{\eta\sqrt{d}}{(1-\beta)^2}. \label{eq:gap_xs_ys}
    \end{align}
    As a consequence, when 
    \begin{align}\label{eq:eta_local_smooth}
        \eta \le \frac{(1-\beta)^2}{L_{1}\sqrt{d}}, \quad \text{then}, \quad \max\{\|\vx_{s+1} - \vx_{s}\|,\|\vy_s - \vx_s\|,\|\vy_{s+1} - \vy_s\| \} \le \frac{1}{L_1},\quad \forall s \ge 1.
    \end{align}
\end{lemma}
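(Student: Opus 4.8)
The plan is to establish \eqref{eq:gap_xs_ys} by bounding each of the three quantities $\|\vx_{s+1}-\vx_s\|$, $\|\vy_s-\vx_s\|$, $\|\vy_{s+1}-\vy_s\|$ separately, in every case reducing to the one-step displacement estimates already proved in Lemma~\ref{lem:estimation_rough} together with the elementary fact $|g_{s,i}|\le\sqrt{v_{s,i}}$ (which holds since $v_{s,i}=\sum_{j\le s}g_{j,i}^2\ge g_{s,i}^2$), and then using $1-\beta\le 1$ to pass from a $(1-\beta)^{-1}$ bound to a $(1-\beta)^{-2}$ bound.

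First I would handle $\|\vx_{s+1}-\vx_s\|$: by Algorithm~\ref{alg:AdaGrad} this equals $\|\vm_s\|$, which Lemma~\ref{lem:estimation_rough} bounds by $\eta\sqrt{d}/(1-\beta)\le\eta\sqrt{d}/(1-\beta)^2$. Next, for $\|\vy_s-\vx_s\|$ I would use \eqref{eq:define_y_s}: when $s\ge 2$ we have $\vy_s-\vx_s=\tfrac{\beta}{1-\beta}(\vx_s-\vx_{s-1})=\tfrac{\beta}{1-\beta}\vm_{s-1}$, so Lemma~\ref{lem:estimation_rough} gives $\|\vy_s-\vx_s\|\le\tfrac{\beta}{1-\beta}\cdot\tfrac{\eta\sqrt{d}}{1-\beta}\le\tfrac{\eta\sqrt{d}}{(1-\beta)^2}$, while for $s=1$ the bound is trivial since $\vy_1=\vx_1$. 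Finally, for $\|\vy_{s+1}-\vy_s\|$ I would invoke \eqref{eq:y_iterative}, which gives $\vy_{s+1}-\vy_s=-\tfrac{\eta}{1-\beta}\vg_s/\vb_s$; since $\vb_s=\sqrt{\vv_s}+\vep\ge\sqrt{\vv_s}$ coordinate-wise and $|g_{s,i}|\le\sqrt{v_{s,i}}$, each coordinate of $\vg_s/\vb_s$ has absolute value at most $1$, so $\|\vg_s/\vb_s\|\le\sqrt{d}$ and hence $\|\vy_{s+1}-\vy_s\|\le\eta\sqrt{d}/(1-\beta)\le\eta\sqrt{d}/(1-\beta)^2$. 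Taking the maximum of the three bounds yields \eqref{eq:gap_xs_ys}.

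The consequence \eqref{eq:eta_local_smooth} is then immediate: substituting the hypothesis $\eta\le(1-\beta)^2/(L_1\sqrt{d})$ into \eqref{eq:gap_xs_ys} gives $\eta\sqrt{d}/(1-\beta)^2\le 1/L_1$. Honestly, I do not expect a genuine obstacle here — the lemma is a direct corollary of Lemma~\ref{lem:estimation_rough}; the only points requiring minor care are the base case $s=1$ in the middle bound and recalling why $\|\vg_s/\vb_s\|_\infty\le 1$. Its role, however, is important downstream: it is exactly what guarantees that all the relevant pairs $(\vx_s,\vx_{s+1})$, $(\vx_s,\vy_s)$, $(\vy_s,\vy_{s+1})$ stay within distance $1/L_1$, so that the $(L_0,L_1)$-smoothness inequality \eqref{eq:general_smooth_1} may legitimately be applied to them in the proof of Theorem~\ref{thm:general_smooth}.
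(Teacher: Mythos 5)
Your proof is correct and follows essentially the same route as the paper: bound each of the three displacements separately via Lemma~\ref{lem:estimation_rough}, the identity $\vy_s-\vx_s=\tfrac{\beta}{1-\beta}\vm_{s-1}$ (with the $s=1$ base case trivial since $\vy_1=\vx_1$), and the coordinate-wise bound $|g_{s,i}|/b_{s,i}\le 1$ giving $\|\vg_s/\vb_s\|\le\sqrt{d}$, then use $1-\beta\le 1$ to uniformize to $(1-\beta)^{-2}$. The consequence \eqref{eq:eta_local_smooth} is, as you note, immediate from substitution.
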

\begin{proof}
    Recalling in Lemma \ref{lem:estimation_rough}, we have already bounded $\|\vm_s\|$ that is independent from smooth-related conditions as follows:
    \begin{align}\label{eq:xs_xs+1}
        \|\vx_{s+1} - \vx_s \| = \|\vm_s\| \le \frac{\eta\sqrt{d}}{1-\beta},\quad \forall s \ge 1.
    \end{align}
    Applying the definition of $\vy_s$ in \eqref{eq:define_y_s}, \eqref{eq:xs_xs+1} and $\beta \in [0,1)$,\footnote{The inequality still holds for $s =1$ since $\vx_1 = \vy_1$.}
    \begin{align}\label{eq:ys_xs}
        \|\vy_s - \vx_s \| = \frac{\beta}{1-\beta}\|\vx_s - \vx_{s-1}\| \le \frac{\eta\sqrt{d}}{(1-\beta)^2} ,\quad \forall s \ge 1.
    \end{align}
    Using the iteration of $\vy_s$ in \eqref{eq:y_iterative} and Young's inequality
    \begin{align}\label{eq:ys+1_ys}
        \|\vy_{s+1}-\vy_s\| = \frac{\eta}{1-\beta}\left\| \frac{\vg_s}{\vb_s}\right\| \le \frac{\eta\sqrt{d}}{1-\beta}\left\| \frac{\vg_s}{\vb_s}\right\|_{\infty} \le\frac{\eta\sqrt{d}}{1-\beta},\quad \forall s \ge 1,
    \end{align}
    where we apply $|\gsi/\bsi| \le 1,\forall i \in [d]$ in the last inequality.
    Combining with \eqref{eq:xs_xs+1}, \eqref{eq:ys_xs} and \eqref{eq:ys+1_ys}, and using $\beta \in [0,1)$, we then deduce an uniform bound for all three gaps. Finally, letting $\frac{\eta\sqrt{d}}{(1-\beta)^2} \le \frac{1}{L_1}$, we then prove that \eqref{eq:eta_local_smooth} holds.
\end{proof}
\begin{lemma}\label{lem:local_smooth}
    Suppose that \eqref{eq:eta_local_smooth} holds. If $f$ is $(L_0,L_1)$-smooth, then for any $s \ge 1$, 
    \begin{equation}\label{eq:general_gradient_gap}
        \begin{split}
            \|\nabla f(\vx_{s+1})- \nabla f(\vx_s)\| &\le \mL_s \|\vx_{s+1}-\vx_s\|,\\
            \|\nabla f(\vy_s)- \nabla f(\vx_s)\|  &\le \mL_s\|\vy_s - \vx_s\| ,\\
            \|\nabla f(\vy_{s+1})- \nabla f(\vy_s)\| &\le \mL_s \|\vy_{s+1} - \vy_s\|.
        \end{split}
    \end{equation}
    As a consequence, for any $s \ge 1$,
    \begin{equation}\label{eq:general_descent}
        \begin{split}
            f(\vy_{s+1}) - f(\vy_s) - \la \nabla f(\vy_s),\vy_{s+1} - \vy_s \ra &\le \frac{\mL_s}{2}\|\vy_{s+1} - \vy_s\|^2, \\
            f(\vx_s) - f(\vy_s) - \la \nabla f(\vy_s),\vx_s-\vy_s\ra &\le \frac{\mL_s}{2}\|\vx_{s}-\vy_s\|^2,\\
            f(\vx_{s+1}) - f(\vx_s) - \la \nabla f(\vx_s),\vx_{s+1} - \vx_s \ra &\le \frac{\mL_s}{2}\|\vx_{s+1} - \vx_s\|^2.
        \end{split}
    \end{equation}
\end{lemma}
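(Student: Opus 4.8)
The plan is to derive \eqref{eq:general_gradient_gap} by a single clean application of the local smoothness inequality \eqref{eq:general_smooth_1} at a suitable base point, after two preliminary observations. First, by hypothesis \eqref{eq:eta_local_smooth} (equivalently Lemma \ref{lem:gap_xs_ys}) the three displacements $\vx_{s+1}-\vx_s$, $\vy_s-\vx_s$, and $\vy_{s+1}-\vy_s$ all have norm at most $1/L_1$, so \eqref{eq:general_smooth_1} may be invoked for each of the corresponding pairs. Second, the ``effective smoothness'' $L_0 + L_1\|\nabla f(\cdot)\|$ at the chosen base point must be shown to be dominated by $\mL_s$. At the base point $\vx_s$ this is immediate from Lemma \ref{lem:general_gradient_value}: it gives $\|\nabla f(\vx_s)\| \le \max\{4L_1\delx_s,\sqrt{4L_0\delx_s}\} \le 4L_1\delx_s + \sqrt{4L_0\delx_s}$, hence $L_0 + L_1\|\nabla f(\vx_s)\| \le L_0 + L_1(4L_1\delx_s+\sqrt{4L_0\delx_s}) \le \mL_s$, the last step using only nonnegativity of every term. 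Applying \eqref{eq:general_smooth_1} with $(\vx,\vy)=(\vx_s,\vx_{s+1})$ and with $(\vx,\vy)=(\vx_s,\vy_s)$ then yields the first and second lines of \eqref{eq:general_gradient_gap}.

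The one place that needs bookkeeping is the third line, whose base point is $\vy_s$, so I would first have to bound $\|\nabla f(\vy_s)\|$ in terms of $\delx_s$ (not $\dely_s$). I would write $\|\nabla f(\vy_s)\| \le \|\nabla f(\vx_s)\| + \|\nabla f(\vy_s)-\nabla f(\vx_s)\|$, bound the second term via \eqref{eq:general_smooth_1} at base point $\vx_s$ (legitimate since $\|\vy_s-\vx_s\|\le 1/L_1$) by $(L_0+L_1\|\nabla f(\vx_s)\|)\|\vy_s-\vx_s\| \le (L_0+L_1\|\nabla f(\vx_s)\|)/L_1 = L_0/L_1 + \|\nabla f(\vx_s)\|$, and conclude $\|\nabla f(\vy_s)\| \le 2\|\nabla f(\vx_s)\| + L_0/L_1$. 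Therefore $L_0 + L_1\|\nabla f(\vy_s)\| \le 2L_0 + 2L_1\|\nabla f(\vx_s)\| \le 2L_0 + 2L_1(4L_1\delx_s+\sqrt{4L_0\delx_s}) = \mL_s$, which is exactly where the factor $2$ in the definition of $\mL_s$ in \eqref{eq:define_L_s} is spent. Invoking \eqref{eq:general_smooth_1} with $(\vx,\vy)=(\vy_s,\vy_{s+1})$ then gives the third line.

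Finally, I would deduce the descent inequalities \eqref{eq:general_descent} from \eqref{eq:general_gradient_gap} by the usual fundamental-theorem-of-calculus argument: for points $\vw,\vz$ with $\|\vw-\vz\|\le 1/L_1$ one has $f(\vw)-f(\vz)-\la\nabla f(\vz),\vw-\vz\ra = \int_0^1 \la \nabla f(\vz+t(\vw-\vz))-\nabla f(\vz),\,\vw-\vz\ra\,dt$, and since $\|\vz+t(\vw-\vz)-\vz\| = t\|\vw-\vz\| \le 1/L_1$ for $t\in[0,1]$, each integrand is at most $(L_0+L_1\|\nabla f(\vz)\|)\,t\,\|\vw-\vz\|^2$, so the integral is at most $\tfrac12(L_0+L_1\|\nabla f(\vz)\|)\|\vw-\vz\|^2$. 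Taking $(\vz,\vw)$ equal to $(\vy_s,\vy_{s+1})$, $(\vy_s,\vx_s)$, and $(\vx_s,\vx_{s+1})$ in turn (each segment lying within distance $1/L_1$ of its base point by \eqref{eq:eta_local_smooth}), and inserting the bounds $L_0+L_1\|\nabla f(\vy_s)\| \le \mL_s$ and $L_0+L_1\|\nabla f(\vx_s)\| \le \mL_s$ proved above, gives the three lines of \eqref{eq:general_descent}. I expect the main—indeed essentially the only—obstacle to be the third gradient-gap estimate: one must route through $\vx_s$ to control $\|\nabla f(\vy_s)\|$ and verify that the slack built into $\mL_s$ absorbs the triangle-inequality loss; everything else is a mechanical combination of \eqref{eq:general_smooth_1}, Lemma \ref{lem:general_gradient_value}, and Lemma \ref{lem:gap_xs_ys}.
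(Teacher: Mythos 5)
Your proof is correct and follows essentially the same route as the paper: bound $L_0 + L_1\|\nabla f(\vx_s)\| \le \mL_s$ directly from Lemma \ref{lem:general_gradient_value}, then route through $\vx_s$ via $\|\nabla f(\vy_s)\| \le 2\|\nabla f(\vx_s)\| + L_0/L_1$ to absorb the triangle-inequality loss into the factor $2$ in $\mL_s$, and apply \eqref{eq:general_smooth_1} at the appropriate base points. The only cosmetic difference is that you spell out the fundamental-theorem-of-calculus argument for \eqref{eq:general_descent}, whereas the paper delegates that step to a citation of Lemma A.3 in \citet{zhang2020improved}.
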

\begin{proof}
    Noting that when \eqref{eq:eta_local_smooth} holds, we could use $\|\vy_s-\vx_s\|\le 1/L_1$ and the generalized smoothness in \eqref{eq:general_smooth_1} to get that
    \begin{align*}
        \|\nabla f(\vy_s) \| 
        &\le \|\nabla f(\vx_s) \| + \|\nabla f(\vy_s)-\nabla f(\vx_s) \| \\
        &\le \|\nabla f(\vx_s) \| + (L_0+L_{1}\|\nabla f(\vx_s) \| )\|\vy_s - \vx_s\|  \le  2\|\nabla f(\vx_s) \|  + L_0/L_{1}. 
    \end{align*}
    Using Lemma \ref{lem:general_gradient_value} and combining with $\mL_s$ in \eqref{eq:define_L_s}, we have 
    \begin{align}\label{eq:Lys_local_smooth}
        &L_0 + L_{1} \|\nabla f(\vx_s) \| \le L_0 +L_1 \left(4L_1\delx_s + \sqrt{4L_0\delx_s}\right)  \le \mL_s, \nonumber \\ 
        &L_0 + L_{1} \|\nabla f(\vy_s) \| \le 2L_0 +2L_{1} \|\nabla f(\vx_s) \| \le  2L_0 +2L_{1}\left(4L_1\delx_s + \sqrt{4L_0\delx_s}\right)\le  \mL_s.
    \end{align}
    Thus, combining with \eqref{eq:general_smooth_1}, we prove \eqref{eq:general_gradient_gap}. Now based on \eqref{eq:general_gradient_gap}, we could deduce \eqref{eq:general_descent}. We refer to the proof of \citep[Lemma A.3]{zhang2020improved}. 
\end{proof}
\subsection{Rough Estimations}
In generalized smooth cases, we revise the estimations in Lemmas \ref{lem:estimation_rough} and \ref{lem:delta_rough} as follows.
\begin{lemma}\label{lem:general_estimation_rough}
    Suppose that $f$ is $(L_0,L_1)$-smooth, $\beta \in [0,1)$ and \eqref{eq:eta_local_smooth} holds. Then, for any $s \ge 1$, 
    \begin{align*}
        \|\vm_s \| \le \frac{\eta\sqrt{d}}{1-\beta},\quad \|\bar{\vg}_s \|  \le \|\bar{\vg}_1\| + \frac{\eta\sqrt{d}}{1-\beta}\sum_{j=1}^{s} \mL_j.
    \end{align*}
\end{lemma}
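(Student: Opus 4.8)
The plan is to mimic the proof of Lemma~\ref{lem:estimation_rough}, replacing the global Lipschitz bound on gradient increments by the generalized-smoothness increment bound from Lemma~\ref{lem:local_smooth}. First I would establish the bound on $\|\vm_s\|$. This part is actually unchanged from the standard-smooth case: the recursion $\vm_s = -\sum_{j=1}^s \beta^{s-j}{\bm \eta}_j \odot \vg_j$ in \eqref{eq:ms_iteration} together with the elementary fact $|\gsi|\le\sqrt{\vsi}$ (hence $\|\vg_s/\sqrt{\vv_s}\|_\infty\le 1$, so $\|\vg_s/\sqrt{\vv_s}\|\le\sqrt d$) gives $\|\vm_s\|\le \eta\sqrt d\sum_{j=1}^s\beta^{s-j}\le \eta\sqrt d/(1-\beta)$, exactly as in \eqref{eq:gs_gs-1}. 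None of this uses smoothness, so it transfers verbatim.

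Next I would bound $\|\bar{\vg}_s\|$ by induction on $s$. Since $\vx_{s+1}-\vx_s=\vm_s$ and the hypothesis \eqref{eq:eta_local_smooth} holds, Lemma~\ref{lem:gap_xs_ys} guarantees $\|\vx_s-\vx_{s-1}\|\le 1/L_1$, so the first inequality in \eqref{eq:general_gradient_gap} of Lemma~\ref{lem:local_smooth} applies and yields
\begin{align*}
\|\bar{\vg}_s\| \le \|\bar{\vg}_{s-1}\| + \|\nabla f(\vx_s)-\nabla f(\vx_{s-1})\| \le \|\bar{\vg}_{s-1}\| + \mL_{s-1}\|\vx_s-\vx_{s-1}\| = \|\bar{\vg}_{s-1}\| + \mL_{s-1}\|\vm_{s-1}\|.
\end{align*}
Plugging in the bound $\|\vm_{s-1}\|\le \eta\sqrt d/(1-\beta)$ from the first part and unrolling the telescoping recursion down to $s=1$ gives $\|\bar{\vg}_s\|\le \|\bar{\vg}_1\| + \frac{\eta\sqrt d}{1-\beta}\sum_{j=1}^{s-1}\mL_j$, which is at most the claimed $\|\bar{\vg}_1\| + \frac{\eta\sqrt d}{1-\beta}\sum_{j=1}^{s}\mL_j$ since $\mL_j\ge 0$. (One should note the base case $s=1$ holds trivially, the empty sum being zero.)

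The one subtlety — and the only place where care is needed — is that the increment constant $\mL_{s-1}$ in Lemma~\ref{lem:local_smooth} depends on $\delx_{s-1}=f(\vx_{s-1})-f^*$ through $\mL_s = 2L_0+2L_1(4L_1\delx_s+\sqrt{4L_0\delx_s})$, so at this stage the bound on $\|\bar{\vg}_s\|$ is in terms of the (not yet controlled) function-value gaps. This is fine for the present lemma, which merely records the rough estimate in terms of $\sum_j\mL_j$; the actual control of $\delx_s$, and hence of $\mL_j$, will come later via the induction argument analogous to Proposition~\ref{pro:delta_s}. So the main obstacle here is not in the proof of this lemma itself but in remembering that it is only a conditional/rough bound: it is valid only on the event that \eqref{eq:eta_local_smooth} holds and the $\mL_j$ are treated as given quantities. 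Within that scope the proof is a direct adaptation of Lemma~\ref{lem:estimation_rough}, with the single substitution $L\|\vx_s-\vx_{s-1}\|\mapsto\mL_{s-1}\|\vx_s-\vx_{s-1}\|$ licensed by the localized smoothness of Lemma~\ref{lem:local_smooth}.
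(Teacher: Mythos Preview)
Your proposal is correct and follows essentially the same approach as the paper: the $\|\vm_s\|$ bound is lifted verbatim from Lemma~\ref{lem:estimation_rough} (no smoothness used), and the gradient bound is obtained by the one-step increment $\|\bar\vg_s\|\le\|\bar\vg_{s-1}\|+\mL_{s-1}\|\vm_{s-1}\|$ via \eqref{eq:general_gradient_gap} and then telescoped. Your extra remark that the $\mL_j$'s depend on the not-yet-controlled $\delx_j$'s is accurate and matches the role this lemma plays in the paper's subsequent induction argument.
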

\begin{proof}
First, the estimation for $\|\vm_s\|$ remains unchanged as in Lemma \ref{lem:estimation_rough} since it does not rely on smooth-related conditions.
Note that \eqref{eq:eta_local_smooth} holds. Then using \eqref{eq:general_gradient_gap}, for any $s \ge 2$,
    \begin{align}\label{eq:general_gs_gs-1}
        \|\bar{\vg}_s \| \le \|\bar{\vg}_{s-1}\| + \|\bar{\vg}_s - \bar{\vg}_{s-1}\| \le \|\bar{\vg}_{s-1}\| + \mL_{s-1} \|\vx_s - \vx_{s-1}\| = \|\bar{\vg}_{s-1}\| + \mL_{s-1}\|\vm_{s-1}\|.
    \end{align}
    Further using \eqref{eq:general_gs_gs-1} and the estimation for $\|\vm_s\|$, for any $s \ge 2$,
    \begin{align*}
        \|\bar{\vg}_s \| \le \|\bar{\vg}_{s-1}\| + \frac{\mL_{s-1}\eta\sqrt{d}}{1-\beta} \le \|\bar{\vg}_1\| + \frac{\eta\sqrt{d}}{1-\beta}\sum_{j=1}^{s-1} \mL_j .    
    \end{align*}
    Note that the above inequality also holds when $s = 1$. Thus, the proof is complete.
\end{proof}
\begin{lemma}\label{lem:general_delta_rough}
    Under the same conditions of Lemma \ref{lem:general_estimation_rough}, for any $T \ge 1$,
    \begin{align}\label{eq:poly_I_t}
        \sum_{t=1}^T \delx_t \le\mI_T,\quad \mI_T:= \delx_1  T+ \frac{\eta \sqrt{d}}{1-\beta} \sum_{t=1}^T \sum_{s=1}^t \left(\|\bar{\vg}_1\| + \frac{\eta \sqrt{d}}{1-\beta}\sum_{j=1}^{s
        }\mL_j \right) + \frac{\eta^2 d}{2(1-\beta)^2}\sum_{t=1}^T\sum_{s=1}^t \mL_s.
    \end{align}
\end{lemma}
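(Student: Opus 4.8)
The plan is to follow the proof of Lemma~\ref{lem:delta_rough} almost line for line, replacing the global-smoothness descent inequality by its generalized counterpart from Lemma~\ref{lem:local_smooth}. The standing hypothesis \eqref{eq:eta_local_smooth}, which is included in ``the same conditions of Lemma~\ref{lem:general_estimation_rough}'', is exactly what makes this legitimate: by Lemma~\ref{lem:gap_xs_ys} it guarantees $\|\vx_{s+1}-\vx_s\|\le 1/L_1$, so the third inequality of \eqref{eq:general_descent} applies. Recalling $\vx_{s+1}-\vx_s=\vm_s$ and $\nabla f(\vx_s)=\bar{\vg}_s$, it reads
\[
    f(\vx_{s+1}) \le f(\vx_s) + \la \bar{\vg}_s,\vm_s\ra + \frac{\mL_s}{2}\|\vm_s\|^2 .
\]

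Next I would estimate the two terms on the right by Cauchy--Schwarz together with the rough bounds of Lemma~\ref{lem:general_estimation_rough}: $\la \bar{\vg}_s,\vm_s\ra \le \|\bar{\vg}_s\|\,\|\vm_s\| \le \tfrac{\eta\sqrt{d}}{1-\beta}\bigl(\|\bar{\vg}_1\| + \tfrac{\eta\sqrt{d}}{1-\beta}\sum_{j=1}^{s}\mL_j\bigr)$ and $\tfrac{\mL_s}{2}\|\vm_s\|^2 \le \tfrac{\mL_s\eta^2 d}{2(1-\beta)^2}$. Subtracting $f^*$ from both sides and telescoping over $s\in[t]$ then yields the per-iteration estimate
\[
    \delx_{t+1} \le \delx_1 + \frac{\eta\sqrt{d}}{1-\beta}\sum_{s=1}^{t}\Bigl(\|\bar{\vg}_1\| + \frac{\eta\sqrt{d}}{1-\beta}\sum_{j=1}^{s}\mL_j\Bigr) + \frac{\eta^2 d}{2(1-\beta)^2}\sum_{s=1}^{t}\mL_s ,
\]
which is the generalized-smoothness analogue of the intermediate bound \eqref{eq:delx_{t+1}_smooth} in the proof of Lemma~\ref{lem:delta_rough}.

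Finally I would sum this estimate over $t\in\{0,1,\dots,T-1\}$, using the paper's convention that empty sums vanish: the left side collapses to $\sum_{t=1}^{T}\delx_t$, the constant $\delx_1$ is counted $T$ times giving $\delx_1 T$, and, since every summand of the two double sums is non-negative, the accumulated double-sum terms are bounded by the same sums with the outer index ranging over $[T]$, which turns the right-hand side into exactly $\mI_T$. I do not expect a genuine obstacle here: the only point requiring attention is that, unlike the $L$-smooth case where $L$ factors out of the sums, $\mL_s$ depends on $s$, so the bound must be left in the ``uncollapsed'' double-sum form $\mI_T$ rather than being simplified to an explicit polynomial in $T$; that simplification, when needed, is a separate routine step carried out downstream.
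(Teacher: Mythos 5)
Your proof is correct and mirrors the paper's own argument step for step: invoke the third descent inequality of \eqref{eq:general_descent} (valid under \eqref{eq:eta_local_smooth} via Lemma~\ref{lem:gap_xs_ys}), bound $\la\bar{\vg}_s,\vm_s\ra$ and $\frac{\mL_s}{2}\|\vm_s\|^2$ using Cauchy--Schwarz and Lemma~\ref{lem:general_estimation_rough}, telescope over $s$, and then sum the resulting per-iteration bound over $t\in\{0,\dots,T-1\}$, enlarging the outer index range to $[T]$ by non-negativity. No gaps.
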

\begin{proof} 
    Since \eqref{eq:eta_local_smooth} holds, we could rely on the updated rule in Algorithm \ref{alg:AdaGrad} and \eqref{eq:general_descent} to obtain that
    \begin{align}\label{eq:general_xs_descent}
        f(\vx_{s+1})
        &\le f(\vx_s)+ \la \bar{\vg}_s, \vx_{s+1}-\vx_s \ra + \frac{\mL_s}{2} \|\vx_{s+1} - \vx_s\|^2  = f(\vx_s) + \la \bar{\vg}_s, \vm_s \ra + \frac{\mL_s}{2} \|\vm_s\|^2.
    \end{align} 
    Using Cauchy-Schwarz inequality and Lemma \ref{lem:general_estimation_rough}, for any $s \ge 1$,
    \begin{align*}
        &\la \bar{\vg}_s, \vm_s \ra \le \|\bar{\vg}_s\|  \|\vm_s\|  \le \frac{\eta \sqrt{d}}{1-\beta}\left(\|\bar{\vg}_1\| + \frac{\eta \sqrt{d}}{1-\beta}\sum_{j=1}^{s} \mL_j \right),\quad \frac{\mL_s}{2}\|\vm_s\|^2 \le \frac{\mL_s\eta^2 d}{2(1-\beta)^2}.
    \end{align*}
    Combining the above, subtracting $f^*$ on both sides of \eqref{eq:general_xs_descent} and summing up over $s \in [t]$, we obtain that for any $t \ge 1$,
    \begin{align*}
        \delx_{t+1} 
        &\le \delx_1 +\frac{\eta \sqrt{d}}{1-\beta} \sum_{s=1}^t \left(\|\bar{\vg}_1\| + \frac{\eta \sqrt{d}}{1-\beta}\sum_{j=1}^{s}\mL_j \right) + \frac{\eta^2 d}{2(1-\beta)^2}\sum_{s=1}^t \mL_s.
    \end{align*}
    We define $\sum_{a}^b = 0$ when $a < b$. Then, we sum up over $t \in [0,1,\cdots T-1]$ and  
    obtain the desired result.
%     that
%    \begin{align*}
%        \sum_{t=1}^T \delx_{t} 
%        &\le \sum_{t=1}^T\delx_1 + \frac{\eta \sqrt{d}}{1-\beta} \sum_{t=1}^T \sum_{s=1}^t \left(\|\bar{\vg}_1\| + \frac{\eta \sqrt{d}}{1-\beta}\sum_{j=1}^{s}\mL_j \right) + \frac{\eta^2 d}{2(1-\beta)^2}\sum_{t=1}^T\sum_{s=1}^t \mL_s.
%    \end{align*}
\end{proof}

\subsection{Start Point and Decomposition}
To start with, we recall $\eta$ in \eqref{eq:general_parameter} and verify that \eqref{eq:eta_local_smooth} always holds. Then, we could use the descent lemma \eqref{eq:general_descent} and apply \eqref{eq:y_iterative}, and sum up both sides over $s \in [t]$ to get that
\begin{align}
    f(\vy_{t+1}) 
    &\le f(\vx_1) + \sum_{s=1}^t\langle \nabla f(\vy_s), \vy_{s+1}-\vy_s \rangle + \sum_{s=1}^t\frac{\mL_s}{2}\|\vy_{s+1}-\vy_s\|^2 \nonumber \\
               &= f(\vx_1) + \frac{\eta}{1-\beta}\cdot \textbf{A} + \frac{\eta^2}{2(1-\beta)^2} \sum_{s=1}^t\mL_s \left\| \frac{\vg_s}{\vb_s} \right\|^2 ,  \label{eq:general_A+B}
\end{align}
where we use $\vx_1 = \vy_1 $ and the definition of \textbf{A} in \eqref{eq:A+B}. We follow the decomposition in \eqref{eq:general_A_decomp} and restate as follows,
\begin{align}\label{eq:general_A_decomp}
    \textbf{A} 
    &=  \underbrace{  -\sum_{s=1}^t\left\langle \bar{\vg}_s, \frac{\vg_s}{\vb_s}\right\rangle }_{\textbf{A.1}} + \underbrace{ \sum_{s=1}^t \left\langle \bar{\vg}_s - \nabla f(\vy_s), \frac{\vg_s}{\vb_s}\right\rangle}_{\textbf{A.2}}.
\end{align}

\subsection{Estimating {\bf A}}
We then introduce $\tva$ in \eqref{eq:general_proxy_stepsize} into \eqref{eq:general_A_decomp} to derive that
\begin{align}
    &{\bf A.1} =  - \sum_{s=1}^t  \left\|\frac{\bar{\vg}_s}{\sqrt{\tva_s}}\right\|^2 \underbrace{-  \sum_{s=1}^t \left\la  \bar{\vg}_s, \frac{\vxi_s}{\tva_s} \right\ra }_{\textbf{A'.1.1}} + \underbrace{\sum_{s=1}^t  \left\la \bar{\vg}_s,  \left( \frac{1}{\tva_s} - \frac{1}{\vb_s} \right)\vg_s 
        \right\ra}_{\textbf{A'.1.2}}.  \label{eq:general_A.1.12}
\end{align}
The technique for estimating {\bf A'.1.1} is similar to Lemma \ref{lem:1_bounded}. Let $X'_s = -\left\la  \bar{\vg}_s, \frac{\vxi_s}{\tva_s} \right\ra$. We could still verify that $X'_s$ is a martingale difference sequence and define $\zeta'_{s} =  \left\|{\bar{\vg}_s \over \tva_s}\right\| \sqrt{A \delx_s + B \|\bar{\vg}_s\|^2 + C}$. Similarly, $\zeta'_{s}$ is a random variable dependent on $\vz_1,\cdots,\vz_{s-1}$. Using Cauchy-Schwarz inequality and Assumption (A3), we have
    \begin{align*}
        &\mE \left[\exp\left[\left(\frac{X'_{s}}{\zeta'_{s}}\right)^2\right] \mid \vz_1,\cdots,\vz_{s-1} \right] 
        \le \mE \left[ \exp\left(\frac{\| \vxi_s \|^2}{A \delx_s + B \|\bar{\vg}_s\|^2 + C} \right)\mid \vz_1,\cdots,\vz_{s-1} \right] \le \mathrm{e}.
    \end{align*}
However, using Lemma \ref{lem:general_gradient_value} and $\mH_s$ in \eqref{eq:define_L_s}, we derive that
\begin{align*}
    A \delx_s + B \|\bar{\vg}_s\|^2 + C \le A\delx_s + B\left( 4L_1\delx_s + \sqrt{4L_0\delx_s}\right)^2  + C \le \mH_s^2.
\end{align*}
Hence, we obtain a different inequality from \eqref{eq:martin_1} that
\begin{align}
        \textbf{A'.1.1} 
        &\le \frac{3\lambda }{4}\sum_{s=1}^t \sum_{i=1}^d\frac{ \bgsi^2}{\tasi^2}\left(A \delx_s + B \|\bar{\vg}_s\|^2 + C \right)+ \frac{1}{\lambda} \log \left(\frac{1}{\delta} \right) \nonumber \\
        &\le \frac{3\lambda }{4}\sum_{s=1}^t \sum_{i=1}^d\frac{ \bgsi^2  \mH_s^2}{\tasi^2} + \frac{1}{\lambda} \log \left(\frac{1}{\delta} \right) \le \frac{3\lambda }{4}\sum_{s=1}^t \sum_{i=1}^d \frac{ \bgsi^2  \mH_s }{\tasi} + \frac{1}{\lambda} \log \left(\frac{1}{\delta} \right), \label{eq:martin_2}
    \end{align}
where the last inequality applies $1/\tasi \le 1/\mH_s $ from \eqref{eq:general_proxy_stepsize}.
Then, we can re-scale $\delta$ and take $\lambda = 1/(3\mH)$, leading to the new estimation as follows: with probability at least $1-\delta$, 
\begin{align}\label{eq:general_A.1.1}
    \textbf{A'.1.1} \le \frac{1}{4}\sum_{s=1}^t\frac{ \mH_s}{\mH}\left\|\frac{ \bar{\vg}_s  }{\sqrt{\tva_s}}\right\|^2 + 3 \mH  \log \left(\frac{T}{\delta} \right),\quad \forall t \in [T].
\end{align}
The estimation for {\bf A'.1.2} remains similar to \eqref{eq:A.1.2}. We first derive from the basic inequality and Assumption (A3) that 
\begin{align*}
    \|\vg_s\|^2 \le 2\|\bar{\vg}_s\|^2 + 2\|\vxi_s\|^2 \le 2A\delx_s + 2(B+1)\|\bar{\vg}_s\|^2 + 2C \le \mH_s^2.
\end{align*}
Then, based on $\|\vg_s\|^2 \le  \mH_s^2$, we derive a similar result as in \eqref{eq:a-b} where 
\begin{align*}
    \left|\frac{1}{\tasi} - \frac{1}{\bsi} \right| \le \frac{\mH_s}{\tasi\bsi},\quad \forall s \in[T] ,\forall i \in [d].
\end{align*}
Then, using a similar deduction in \eqref{eq:A.1.2}, we derive that
\begin{align}
    \textbf{A'.1.2} 
    &\le  \frac{1}{4}\sum_{s=1}^t \left\|\frac{\bar{\vg}_s}{\sqrt{\tva_s}}\right\|^2 +   \sum_{s=1}^t \mH_s \left\|\frac{\vg_s}{\vb_s}\right\|^2. \label{eq:general_A.1.2}
\end{align}
The estimation for {\bf A.2} in \eqref{eq:general_A_decomp} is revised by the following lemma.
\begin{lemma}\label{lem:general_A.2}
    Suppose that $f$ is $(L_0,L_1)$-smooth and \eqref{eq:eta_local_smooth} holds.
    For any $t \ge 1$, if $\beta \in [0,1)$, it holds that 
    \begin{align}
        {\bf A.2}\le \sum_{s=1}^t \frac{\mL_s }{2\eta}\|\vm_{s-1}\|^2 + \sum_{s=1}^t \frac{\mL_s\eta}{2(1-\beta)^2} \left\|\frac{\vg_s}{\vb_s} \right\|^2. \label{eq:general_A.2}
    \end{align}
\end{lemma}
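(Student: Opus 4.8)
The plan is to bound each summand of $\textbf{A.2} = \sum_{s=1}^t \la \bar{\vg}_s - \nabla f(\vy_s), \vg_s/\vb_s\ra$ via Young's inequality, exactly paralleling the proof of Lemma~\ref{lem:A.2} but tracking the step-dependent quantity $\mL_s$ in place of the global constant $L$. Since the hyper-parameter condition \eqref{eq:general_parameter} guarantees \eqref{eq:eta_local_smooth}, Lemma~\ref{lem:local_smooth} applies and yields $\|\bar{\vg}_s - \nabla f(\vy_s)\| = \|\nabla f(\vx_s) - \nabla f(\vy_s)\| \le \mL_s\|\vy_s - \vx_s\|$. From the definition of $\vy_s$ in \eqref{eq:define_y_s}, together with the convention $\vx_0 = \vx_1$ so that $\vx_1 - \vx_0 = \vm_0 = {\bf 0}_d$, we have $\vy_s - \vx_s = \frac{\beta}{1-\beta}(\vx_s - \vx_{s-1}) = \frac{\beta}{1-\beta}\vm_{s-1}$ for every $s \ge 1$; hence $\|\bar{\vg}_s - \nabla f(\vy_s)\| \le \frac{\beta \mL_s}{1-\beta}\|\vm_{s-1}\|$.

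Next I would apply $\la a, b\ra \le \frac{1}{2c_s}\|a\|^2 + \frac{c_s}{2}\|b\|^2$ with $a = \bar{\vg}_s - \nabla f(\vy_s)$, $b = \vg_s/\vb_s$, and the balancing choice $c_s = \eta \mL_s/(1-\beta)^2$. The first resulting term is then at most $\frac{(1-\beta)^2}{2\eta\mL_s}\cdot\frac{\beta^2\mL_s^2}{(1-\beta)^2}\|\vm_{s-1}\|^2 = \frac{\beta^2\mL_s}{2\eta}\|\vm_{s-1}\|^2 \le \frac{\mL_s}{2\eta}\|\vm_{s-1}\|^2$, where the last step uses $\beta < 1$; the second term equals exactly $\frac{\eta\mL_s}{2(1-\beta)^2}\|\vg_s/\vb_s\|^2$. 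Summing over $s \in [t]$ gives the claimed inequality \eqref{eq:general_A.2}.

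I do not anticipate a genuine obstacle here: this is precisely the $(L_0,L_1)$-analogue of Lemma~\ref{lem:A.2}, and the only care needed is (i) to invoke \eqref{eq:eta_local_smooth} before using the generalized smoothness estimate \eqref{eq:general_gradient_gap}, so that the constant appearing is exactly $\mL_s$ with the \emph{matching} step index $s$ — which is what makes the subsequent induction on $\delx_s$ go through — and (ii) to handle $s = 1$ through $\vm_0 = {\bf 0}_d$, where the inequality holds trivially. The one mildly nonstandard point is that $c_s$ is taken proportional to $\mL_s$ rather than to a fixed constant, but this is forced by the requirement to land exactly on the two target sums $\sum_s \frac{\mL_s}{2\eta}\|\vm_{s-1}\|^2$ and $\sum_s \frac{\mL_s\eta}{2(1-\beta)^2}\|\vg_s/\vb_s\|^2$.
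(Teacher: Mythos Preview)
Your proposal is correct and follows essentially the same route as the paper: bound $\|\bar{\vg}_s-\nabla f(\vy_s)\|$ by $\frac{\mL_s\beta}{1-\beta}\|\vm_{s-1}\|$ via \eqref{eq:general_gradient_gap} and the identity $\vy_s-\vx_s=\frac{\beta}{1-\beta}\vm_{s-1}$, then apply Young's inequality with the step-dependent weight $c_s=\eta\mL_s/(1-\beta)^2$. The only cosmetic difference is that the paper drops the factor $\beta\le 1$ before applying Young's inequality rather than after, which makes no substantive change.
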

\begin{proof}
    Noting that when \eqref{eq:eta_local_smooth} holds, we could rely on \eqref{eq:general_gradient_gap} and $\beta \in [0,1)$ to obtain that
    \begin{align}
        &\|\bar{\vg}_s - \nabla f(\vy_s)\| 
        \le \mL_s \|\vy_s - \vx_s\| 
        = \frac{\mL_s\beta}{1-\beta}\|\vx_s - \vx_{s-1}\| 
        \le \frac{\mL_s}{1-\beta}\|\vm_{s-1}\|. \label{eq:general_gradient_xs_ys}
    \end{align}
    Applying Cauchy-Schwarz inequality and using \eqref{eq:general_gradient_xs_ys}, 
    \begin{align*}
        {\bf A.2} &\le \sum_{s=1}^t \|\bar{\vg}_s - \nabla f(\vy_s)\| \left\|\frac{\vg_s}{\vb_s}\right\|  \le  \sum_{s=1}^t \frac{\mL_s}{1-\beta}\|\vm_{s-1}\|\left\|\frac{\vg_s}{\vb_s} \right\| \le  \sum_{s=1}^t \frac{\mL_s }{2\eta}\|\vm_{s-1}\|^2 + \sum_{s=1}^t\frac{\mL_s\eta}{2(1-\beta)^2}\left\|\frac{\vg_s}{\vb_s} \right\|^2.
    \end{align*}
\end{proof}
\subsection{Bounding the function value gap}
Based on the above estimations, we are now ready to provide the bound for the function value gap along the optimization trajectory in the following proposition.
\begin{proposition}\label{pro:general_delta_s}
    Under the same conditions in Theorem \ref{thm:general_smooth}, for any given $\delta \in (0,1)$, the following two inequalities hold with probability at least $1-\delta$,
    \begin{align}
        \delx_t \le \lam_x,\quad \mH_t \le \mH,\quad \mL_t \le \mL,\quad \forall t \in [T+1],\label{eq:general_delta_s_1}
    \end{align}
    and
    \begin{align}
         \dely_{t+1} \le \lam_y- \frac{\eta}{2(1-\beta)}\sum_{s=1}^t \left\| \frac{\bar{\vg}_s}{\sqrt{\tva_s}} \right\|^2, \quad \forall t \in [T],\label{eq:general_delta_s}
    \end{align}
    where $\mH_t,\mL_t$ are as in \eqref{eq:define_L_s} and $\lam_x,\lam_y,\mH,\mL$ are as in Theorem \ref{thm:general_smooth}.
\end{proposition}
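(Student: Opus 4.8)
The plan is to replay the induction argument behind Proposition~\ref{pro:delta_s}, with the global constant $L$ replaced everywhere by the data-dependent $\mL_s$ and the noise amplitude $\mG,\mG_s$ replaced by $\mH,\mH_s$. First I would assemble a master inequality by substituting \eqref{eq:general_A.1.12}, \eqref{eq:general_A.1.1} and \eqref{eq:general_A.1.2} into \eqref{eq:general_A_decomp} and then combining with Lemma~\ref{lem:general_A.2} and \eqref{eq:general_A+B}. On the event of probability at least $1-\delta$ on which \eqref{eq:general_A.1.1} holds, this yields a bound on $\dely_{t+1}$ of the same shape as \eqref{eq:final_1}: the coefficient in front of $\sum_{s\le t}\|\bar{\vg}_s/\sqrt{\tva_s}\|^2$ is $\mH_s/(4\mH)-3/4$, there is a $\tfrac{3\mH\eta}{1-\beta}\log(T/\delta)$ term, and the remaining sums $\sum_{s\le t}\|\vg_s/\vb_s\|^2$ and $\sum_{s\le t}\|\vm_{s-1}\|^2$ now carry the weights $\mH_s$ and $\mL_s$ rather than constants.

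Next I would run the induction on $t$. Assuming $\delx_s\le\lam_x$ for all $s\in[t]$ gives, by \eqref{eq:define_L_s}, $\mH_s\le\mH$ and $\mL_s\le\mL$ for those $s$, so every $\mH_s$- or $\mL_s$-weighted sum is dominated by its constant-coefficient analogue and $\mH_s/(4\mH)\le\tfrac14$, leaving $-\tfrac12\sum_{s\le t}\|\bar{\vg}_s/\sqrt{\tva_s}\|^2$ exactly as in the smooth case. I would then use the generalized analogue of Lemma~\ref{lem:sum_1} --- proved by replaying its argument but invoking Lemma~\ref{lem:general_delta_rough} (and $\mL_s\le\mL$) instead of Lemma~\ref{lem:delta_rough} --- to bound $\|\vm_t\|^2$, $\sum_{s\le t}\|\vm_s\|^2$ and $\sum_{s\le t}\|\vg_s/\vb_s\|^2$ by $d\log$ of a polynomial in $T$ built out of $\mI_T$, and split the residual term proportional to $\mH$ (which, like $\mG$, decomposes into a $\sqrt{\lam_x}$ piece and a $\sqrt{C}$ piece) by Young's inequality as in \eqref{eq:final_3}. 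Collecting the constant, $\log(T/\delta)$ and $\mathrm{poly}(\log)$ contributions into $\lam_y$ (cf.\ \eqref{eq:define_lamy}) then gives $\dely_{t+1}\le\lam_y-\tfrac{\eta}{2(1-\beta)}\sum_{s\le t}\|\bar{\vg}_s/\sqrt{\tva_s}\|^2\le\lam_y$, which is \eqref{eq:general_delta_s}.

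It then remains to convert the $\dely$-bound into $\delx_{t+1}\le\lam_x$ so as to close the induction. For this I would prove the generalized analogue of Lemma~\ref{lem:delta_y_x}: apply the descent estimate from $\vy_{t+1}$ to $\vx_{t+1}$ (legitimate since $\|\vx_{t+1}-\vy_{t+1}\|\le 1/L_1$ by Lemma~\ref{lem:gap_xs_ys}, so the second line of \eqref{eq:general_descent} applies), use Young's inequality on the cross term, and bound $\|\nabla f(\vy_{t+1})\|$ by Lemma~\ref{lem:general_gradient_value} instead of Lemma~\ref{lem:gradient_delta_s}. Because the latter only gives $\|\nabla f(\vy_{t+1})\|\le\max\{4L_1\dely_{t+1},\sqrt{4L_0\dely_{t+1}}\}$, squaring produces a $\dely_{t+1}^2$ term --- this is exactly why one must take $\lam_x\sim\mathcal{O}(\lam_y^2)$ --- and, combined with $\|\vm_t\|^2\le\eta^2d/(1-\beta)^2$, delivers $\delx_{t+1}\le\lam_x$. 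Since $\delx_1\le\lam_x$ is immediate, \eqref{eq:general_delta_s_1} follows (the $\mH_t\le\mH$, $\mL_t\le\mL$ clauses being restatements via \eqref{eq:define_L_s}), and \eqref{eq:general_delta_s} has already been obtained en route.

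The main obstacle is the circular dependence between the quantities being bounded and the bounds: the right-hand side of the master inequality, and of the $\dely\to\delx$ conversion, depends through $\mH$, $\mL$ and $\mI_T$ on $\lam_x$, which is defined via $\lam_y$, which is defined via the master inequality; worse, the conversion at step $t+1$ brings in $\mL_{t+1}$, hence $\delx_{t+1}$ itself. Resolving this is precisely the role of the step-size restriction $\eta\le\min\{C_0,\,C_0/\mH,\,C_0/\mL,\,(1-\beta)^2/(L_1\sqrt d)\}$: it forces every product in which $\mH$ or $\mL$ is multiplied by a power of $\eta$ --- such as $\mH\eta$, $\mL\eta^2$, and the $\mL\eta^2d$ appearing inside the logarithm of $\mI_T$ --- to be bounded by an absolute constant times a polylogarithm, and it makes the $\delx_{t+1}$-proportional remainder in the conversion small enough to be absorbed to the left. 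Pinning down the self-consistent choice of $\lam_y$ and $\lam_x$, and verifying that after all substitutions the negative term survives with coefficient at least $\eta/(2(1-\beta))$, is the delicate bookkeeping; everything else follows the smooth-case proof line for line.
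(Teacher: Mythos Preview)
Your overall plan mirrors the paper's proof almost exactly: assemble the master inequality \eqref{eq:general_final_1}, run the induction with $\mH_s\le\mH$, $\mL_s\le\mL$, invoke the generalized version of Lemma~\ref{lem:sum_1} (the paper's Lemma~\ref{lem:general_sum_1}), and then convert $\dely_{t+1}$ to $\delx_{t+1}$ via the generalized Lemma~\ref{lem:delta_y_x} (the paper's Lemma~\ref{lem:general_delta_y_x}), picking up the quadratic $\lam_y^2$ exactly as you say.

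There is one genuine misstep. You propose to ``split the residual term proportional to $\mH$ (which, like $\mG$, decomposes into a $\sqrt{\lam_x}$ piece and a $\sqrt{C}$ piece) by Young's inequality as in \eqref{eq:final_3}.'' This decomposition is false: by \eqref{eq:define_H_L}, $\mH^2$ contains $2(B{+}1)(4L_1\lam_x+\sqrt{4L_0\lam_x})^2$, whose leading part scales like $\lam_x^2$, so $\mH$ is of order $\lam_x$, not $\sqrt{\lam_x}$. A Young split of $\sqrt{X\lam_x}$-type would therefore produce a $\lam_x^2$ term on the right that cannot be absorbed by any fraction of $\lam_x$ on the left, and the induction would not close. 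The paper does \emph{not} use Young's inequality here at all. Instead it uses precisely the mechanism you describe in your final paragraph: the step-size restriction $\eta\le C_0/\mH$, $\eta\le C_0/\mL$ gives $\mH\eta\le C_0$, $\mL\eta\le C_0$, $\mL\eta^2\le C_0^2$ directly (see \eqref{eq:C_0}), and every occurrence of $\mH\eta$ or $\mL\eta^2$ in the master inequality is replaced by $C_0$ or $C_0^2$. This is why $\lam_y$ in \eqref{eq:define_lamy} depends only on $C_0,\beta,d$ and logarithmic factors, not on $\sqrt{C}$ or $X$ as $\del$ does in the smooth case. So drop the Young step entirely; your last paragraph already contains the correct argument.

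On the circularity you flag for the $\dely\to\delx$ conversion (that $\mL_{t+1}$ depends on $\delx_{t+1}$): you are right that this needs care, and the paper's write-up glosses over it by applying \eqref{eq:C_0} to $\mL_{t+1}$ as if $\mL_{t+1}\le\mL$ were already known. Your ``absorb to the left'' idea does not obviously succeed, since the coefficient of $\delx_{t+1}$ one obtains is of order $L_1^2\eta^2 d\log\tilde{\mJ}_T/(1-\beta)^3$, which the stated restrictions do not force below $1$. A clean fix is to rewrite Lemma~\ref{lem:general_delta_y_x} with the local constant taken at $\vy_{t+1}$ rather than $\vx_{t+1}$: by Lemma~\ref{lem:general_gradient_value} applied at $\vy_{t+1}$ one gets $L_0+L_1\|\nabla f(\vy_{t+1})\|\le L_0+L_1(4L_1\dely_{t+1}+\sqrt{4L_0\dely_{t+1}})\le \mL/2$ once $\dely_{t+1}\le\lam_y\le\lam_x$ is in hand, and then $\eta\le C_0/\mL$ applies without circularity.
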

%\begin{proof} 
In what follows, we prove Proposition \ref{pro:general_delta_s}.
First, we verify that \eqref{eq:eta_local_smooth} holds from the setting of $\eta$ in \eqref{eq:general_parameter}. Hence, we ensure that \eqref{eq:general_A+B} and \eqref{eq:general_A.2} hold.

In the following, we will assume the inequality \eqref{eq:general_A.1.1} holds and then deduce the results in \eqref{eq:general_delta_s_1} and \eqref{eq:general_delta_s}. Noting that \eqref{eq:general_A.1.1} holds with probability at least $1-\delta$, we thereby deduce that the desired results hold with probability at least $1-\delta$. 

Plugging the estimations \eqref{eq:general_A.1.1} and \eqref{eq:general_A.1.2} into \eqref{eq:general_A.1.12},  we obtain the bound for \textbf{A.1}. Then, combining with
\eqref{eq:general_A.2}, \eqref{eq:general_A_decomp} and \eqref{eq:general_A+B}, and subtracting $f^*$ on both sides of \eqref{eq:general_A+B}, 
\begin{align}\label{eq:general_final_1}
    \dely_{t+1}
    &\le \delx_1 + \frac{\eta}{1-\beta}\sum_{s=1}^t\left(\frac{\mH_s}{4\mH}-\frac{3}{4} \right) \left\|\frac{\bar{\vg}_s}{\sqrt{\tva_s}} \right\|^2   + \frac{3 \mH \eta}{1-\beta} \log \left(\frac{T}{\delta} \right)  + \frac{\eta}{1-\beta}\sum_{s=1}^t \mH_s \left\|\frac{\vg_s}{\vb_s} \right\|^2 \nonumber\\
    &+ \sum_{s=1}^t \frac{\mL_s }{2(1-\beta)}\|\vm_{s-1}\|^2 + \sum_{s=1}^t \left(\frac{\mL_s\eta^2}{2(1-\beta)^3} + \frac{\mL_s\eta^2}{2(1-\beta)^2} \right)  \left\|\frac{\vg_s}{\vb_s} \right\|^2.
\end{align}
We still rely on an induction argument to deduce the result. First, we define two polynomials $\tilde{\mI}_t$ and $\tilde{\mJ}_t$ with respect to $t$ and present the detailed expressions of $\lam_y$ and $\lam_x$ that are independent from $t$ as follows:
\begin{align}
    &\tilde{\mI}_t:=\delx_1 \cdot t+ \frac{C_0\sqrt{d}}{1-\beta} \left(\|\bar{\vg}_1\| + \frac{C_0\sqrt{d}t}{1-\beta}  \right)\cdot t^2 + \frac{C_0^2 d}{2(1-\beta)^2}\cdot t^2, \label{eq:tilde_poly_I_t}\\
    &\tilde{\mJ}_t:= 1+ \frac{1}{\ep^2}\left[2A\tilde{\mI}_t + 2(B+1)   \left(\|\bar{\vg}_1\| + \frac{C_0\sqrt{d}t}{1-\beta} \right)^2\cdot t + 2Ct\right], \label{eq:tilde_poly_J_t}\\
    &\lam_y:= \delx_1   + \frac{3 C_0}{1-\beta} \log \left(\frac{T}{\delta} \right)  + \frac{C_0 d}{1-\beta}\log \tilde{\mJ}_T, \nonumber\\
    &\quad+  \frac{C_0^2 d}{2(1-\beta)^3}\log \tilde{\mJ}_T + \left(\frac{C_0^2 d}{2(1-\beta)^3} + \frac{C_0^2 d}{2(1-\beta)^2} \right) \log \tilde{\mJ}_T, \label{eq:define_lamy} \\
    &\lam_x:= (2L_0+1)\lam_y + 8L_1\lam_y^2 + \frac{C_0^2 d}{(1-\beta)^2} \log \tilde{\mJ}_T. \label{eq:define_lamx}
\end{align}
It's worthy noting that $\tilde{\mI}_t$ and $\tilde{\mJ}_t$ are deterministic polynomials with respect to $t$ and are dependent on hyper-parameters $C_0,\beta,d$ and problem-parameters $A,B,C$. We could verify that 
$\lam_y \sim \mO(\log(T/\delta) ))$ and $\lam_x \sim \mO(\log^2(T/\delta) )$. 
\paragraph{Induction argument}
The induction begins by noting that $\delx_1 \le \lam_x$ from \eqref{eq:define_lamy} and \eqref{eq:define_lamx}. Suppose that for some $t \in [T]$, 
\begin{align}\label{eq:general_induction_1}
    \delx_s \le \lam_x, \quad \text{thus}, \quad \mH_s \le \mH, \quad \mL_s \le \mL,\quad \forall s \in [t],
\end{align}
where we rely on $\mH_s$, $\mL_s$ in \eqref{eq:define_L_s}, and $\mH,\mL$ in \eqref{eq:define_H_L}. 
We thus apply \eqref{eq:general_induction_1} to \eqref{eq:general_final_1} and get
\begin{align}
    \dely_{t+1}
    &\le \delx_1 - \frac{\eta}{2(1-\beta)}\sum_{s=1}^t  \left\|\frac{\bar{\vg}_s}{\sqrt{\tva_s}} \right\|^2   + \frac{3 \mH \eta}{1-\beta} \log \left(\frac{T}{\delta} \right)  + \frac{\eta\mH}{1-\beta}\sum_{s=1}^t   \left\|\frac{\vg_s}{\vb_s} \right\|^2 \nonumber\\
    &+  \frac{\mL }{2(1-\beta)}\sum_{s=1}^t\|\vm_{s-1}\|^2 + \left(\frac{\mL\eta^2}{2(1-\beta)^3} + \frac{\mL\eta^2}{2(1-\beta)^2} \right)  \sum_{s=1}^t \left\|\frac{\vg_s}{\vb_s} \right\|^2. \label{eq:general_final_1.1}
\end{align}
Then, Lemmas \ref{lem:sum_1} should be further revised under the generalized smooth condition as follows. 
\begin{lemma}\label{lem:general_sum_1}
    Suppose that $f$ is $(L_0,L_1)$-smooth and \eqref{eq:eta_local_smooth} holds. Then for any $ t \ge 1$,
    \begin{align*}
        \sum_{s=1}^t \left\|\frac{\vg_s}{\vb_s}\right\|^2 \le d\log \mJ_t,\quad \|\vm_t\|^2  \le \frac{\eta^2d}{1-\beta}\log\mJ_t, \quad \sum_{s=1}^t\|\vm_s\|^2  \le \frac{\eta^2d}{(1-\beta)^2}\log\mJ_t,
    \end{align*}
    where $\mJ_t$ is a polynomial with respect to $t$ with the detailed expression in \eqref{eq:poly_J_T}.
\end{lemma}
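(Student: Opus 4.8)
The plan is to follow the proof of Lemma~\ref{lem:sum_1} almost verbatim, simply replacing the constant-smoothness rough estimates (Lemmas~\ref{lem:estimation_rough} and~\ref{lem:delta_rough}) by their generalized-smooth counterparts (Lemmas~\ref{lem:general_estimation_rough} and~\ref{lem:general_delta_rough}). All three target inequalities reduce, coordinate by coordinate, to a polynomial-in-$t$ upper bound on $\sum_{j=1}^t g_{j,i}^2$; that polynomial, after summing over $i\in[d]$ and carrying along the usual $\eta$- and $(1-\beta)$-factors, is precisely the quantity $\mJ_t$ of~\eqref{eq:poly_J_T}.

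For the bound on $\sum_{s=1}^t\|\vg_s/\vb_s\|^2$ I would first write $(\sqrt{v_{s,i}}+\ep)^2\ge v_{s,i}+\ep^2$ and apply Lemma~\ref{lem:log_sum} with $\alpha_s=g_{s,i}^2$ to get $\sum_{s=1}^t \gsi^2/\bsi^2\le\log\bigl(1+\ep^{-2}\sum_{j=1}^t g_{j,i}^2\bigr)$. Then $\sum_{j=1}^t g_{j,i}^2\le\sum_{j=1}^t\|\vg_j\|^2\le 2\sum_{j=1}^t(\|\bar{\vg}_j\|^2+\|\vxi_j\|^2)$, and Assumption~(A3) together with $\|\vxi_j\|^2\le A\delx_j+B\|\bar{\vg}_j\|^2+C$ reduces everything to bounding $\sum_j\delx_j$ and $\sum_j\|\bar{\vg}_j\|^2$. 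Lemma~\ref{lem:general_delta_rough} supplies $\sum_j\delx_j\le\mI_t$, while Lemma~\ref{lem:general_estimation_rough} gives $\|\bar{\vg}_j\|\le\|\bar{\vg}_1\|+\frac{\eta\sqrt d}{1-\beta}\sum_{k=1}^j\mL_k$; substituting these produces the explicit expression $\mJ_t$, and summing the resulting per-coordinate bound over $i\in[d]$ closes the first inequality.

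For $\|\vm_t\|^2$ and $\sum_{s=1}^t\|\vm_s\|^2$ I would reuse the computation from Lemma~\ref{lem:sum_1}: expand $\vm_s=-\sum_{j=1}^s\beta^{s-j}{\bm \eta}_j\odot\vg_j$ as in~\eqref{eq:ms_iteration}, apply convexity of $x\mapsto x^2$ with the weights $\beta^{s-j}/M_s$ where $M_s=\sum_{j=1}^s\beta^{s-j}\le(1-\beta)^{-1}$, pass to $v_{j,i}+\ep^2$ in the denominator, and invoke Lemma~\ref{lem:log_sum} once more; for the double sum one additionally interchanges the order of summation and uses $\sum_{s\ge j}M_s\beta^{s-j}\le(1-\beta)^{-2}$. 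Summing over coordinates yields $\|\vm_t\|^2\le\frac{\eta^2 d}{1-\beta}\log\mJ_t$ and $\sum_{s=1}^t\|\vm_s\|^2\le\frac{\eta^2 d}{(1-\beta)^2}\log\mJ_t$ with the same $\mJ_t$ as above.

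The one genuinely new point, relative to Lemma~\ref{lem:sum_1}, is that the local-smoothness moduli $\mL_s$ here are random — they depend on the iterates through $\delx_s$ — so $\mJ_t$, although still polynomial in $t$ of bounded degree, has coefficients built from $\mI_t$ and $\sum_k\mL_k$ that are not a priori controlled. This is harmless for the lemma itself, which holds pathwise with $\mJ_t$ defined by those expressions, but it is exactly why the step-size restriction $\eta\le\min\{C_0,\,C_0/\mH,\,C_0/\mL,\,(1-\beta)^2/(L_1\sqrt d)\}$ of~\eqref{eq:general_parameter} is imposed: inside the induction of Proposition~\ref{pro:general_delta_s} the hypothesis $\delx_s\le\lam_x$ forces $\mL_s\le\mL$, hence $\eta\mL_s\le C_0$, which collapses each $\frac{\eta\sqrt d}{1-\beta}\sum_{k\le j}\mL_k$ to $\le\frac{C_0\sqrt d\,j}{1-\beta}$ and turns $\mJ_t$ into the deterministic polynomial $\tilde{\mJ}_t$ of~\eqref{eq:tilde_poly_J_t}. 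I expect that interleaving — stating the lemma in a form general enough to be plugged into the induction while keeping $\mJ_t$ honestly polynomial in $t$ — to be the main bookkeeping obstacle; the analytic content is entirely inherited from Lemma~\ref{lem:sum_1}.
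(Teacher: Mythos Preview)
Your proposal is correct and follows essentially the same approach as the paper: reuse the smoothness-independent estimates \eqref{eq:gs/bs}, \eqref{eq:ms}, \eqref{eq:sum_ms} verbatim, then replace the bound on the log argument $\sum_{j\le t} g_{j,i}^2$ by one built from Assumption~(A3), Lemma~\ref{lem:general_estimation_rough}, and Lemma~\ref{lem:general_delta_rough}, yielding the (random-coefficient) polynomial $\mJ_t$ of \eqref{eq:poly_J_T}. Your closing remark---that the lemma holds pathwise with the random $\mJ_t$, and that the deterministic reduction $\mJ_t\le\tilde{\mJ}_t$ only occurs later inside the induction via $\mL_s\le\mL$---is exactly how the paper uses it (cf.\ \eqref{eq:log_J_t}).
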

\begin{proof}
    Recall that the three estimations in \eqref{eq:gs/bs}, \eqref{eq:ms} and \eqref{eq:sum_ms} remain unchanged as they are not dependent by smooth-related conditions. However, we shall revise the estimation for the term inside the logarithm operator. We also start from the basic inequality and Assumption (A3),
    \begin{align*}
        \sum_{j=1}^t g_{j,i}^2 
        &\le 2\sum_{j=1}^t (\bar{g}_{j,i}^2 + \xi_{j,i}^2) \le 2\sum_{j=1}^t (\|\bar{\vg}_{j}\|^2 + \|\vxi_{j}\|^2)  \le 2 \sum_{j=1}^t( A \delx_j + (B+1) \|\bar{\vg}_j\|^2 + C).
    \end{align*}
    Then, applying Lemma \ref{lem:general_estimation_rough} 
    and Lemma \ref{lem:general_delta_rough},
    \begin{align}\label{eq:poly_J_T}
        1+ \frac{1}{\ep^2}\sum_{j=1}^t g_{j,i}^2  \le \mJ_t :=1+ \frac{1}{\ep^2}\left[2A\mI_t + 2(B+1)  \sum_{s=1}^t \left(\|\bar{\vg}_1\| + \frac{\eta\sqrt{d}}{1-\beta}\sum_{j=1}^{s} \mL_j\right)^2 + 2Ct\right].
    \end{align}
    Plugging \eqref{eq:poly_J_T} into \eqref{eq:gs/bs}, \eqref{eq:ms} and \eqref{eq:sum_ms}, we thereby deduce the final result.
\end{proof}
Therefore, applying Lemma \ref{lem:general_sum_1} to \eqref{eq:general_final_1.1}, we obtain that 
\begin{align}
    \dely_{t+1}
    &\le \delx_1 - \frac{\eta}{2(1-\beta)}\sum_{s=1}^t  \left\|\frac{\bar{\vg}_s}{\sqrt{\tva_s}} \right\|^2   + \frac{3 \mH \eta}{1-\beta} \log \left(\frac{T}{\delta} \right)  + \frac{  \mH \eta d}{1-\beta}\log \mJ_t \nonumber\\
    &+  \frac{\mL \eta^2 d}{2(1-\beta)^3}\log \mJ_t + \left(\frac{\mL\eta^2 d}{2(1-\beta)^3} + \frac{\mL\eta^2 d}{2(1-\beta)^2} \right) \log \mJ_t. \label{eq:general_final_2}
\end{align}
Using \eqref{eq:general_parameter}, we have that
\begin{align}
    \mH \eta \le C_0, \quad \mL \eta \le C_0, \quad \mL\eta^2 \le C_0^2. \label{eq:C_0}
\end{align}
We should also note that $\mI_t$ in \eqref{eq:poly_I_t} and $\mJ_t$ in \eqref{eq:poly_J_T} both include $\mL_s,s\le t$. Then recalling $\tilde{\mI}_t$ and $\tilde{\mJ}_t$ in \eqref{eq:tilde_poly_I_t} and \eqref{eq:tilde_poly_J_t}, and applying $\mL_s \le \mL,\forall s \le t$ in \eqref{eq:general_induction_1} and \eqref{eq:C_0}, we have that for any $t \in [T]$, 
\begin{align}\label{eq:log_J_t}
    \mI_t \le \tilde{\mI}_t \le \tilde{\mI}_T, \quad \mJ_t \le \tilde{\mJ}_t \le \tilde{\mJ}_T,\quad \log \mJ_t \le \log \tilde{\mJ}_t \le \log \tilde{\mJ}_T.
\end{align}
Then, applying \eqref{eq:C_0} and \eqref{eq:log_J_t} to \eqref{eq:general_final_2}, and recalling $\lam_y$ in \eqref{eq:define_lamy}, 
\begin{align}
    &\dely_{t+1} \le \lam_y - \frac{\eta}{2(1-\beta)}\sum_{s=1}^t  \left\|\frac{\bar{\vg}_s}{\sqrt{\tva_s}} \right\|^2 \le \lam_y\label{eq:general_final_3}.
\end{align}
Finally, we use the following lemma to control $\delx_s$ by $\dely_s$.
\begin{lemma}\label{lem:general_delta_y_x}
Suppose that $f$ is $(L_0,L_q)$-smooth and \eqref{eq:eta_local_smooth} holds.
Let $\vy_s$ be defined in \eqref{eq:define_y_s} and $\beta \in [0,1)$. Then for any $s \ge 1$,
\begin{align*}
    \delx_s \le (2L_0+1)\dely_s + 8L_1\left(\dely_s\right)^2 + \frac{\mL_s+1 }{2(1-\beta)^2}\|\vm_{s-1} \|^2.
\end{align*}
\end{lemma}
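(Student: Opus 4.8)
The plan is to mirror the proof of Lemma~\ref{lem:delta_y_x}, replacing the two ingredients that used global $L$-smoothness by their generalized-smoothness analogues. First I would observe that under the hypotheses of Theorem~\ref{thm:general_smooth} the step-size condition \eqref{eq:eta_local_smooth} holds (this is exactly the content of Lemma~\ref{lem:gap_xs_ys}, which bounds $\|\vy_s-\vx_s\|\le\eta\sqrt d/(1-\beta)^2\le 1/L_1$), so the pair $(\vx_s,\vy_s)$ is admissible for Lemma~\ref{lem:local_smooth}. Applying the second inequality of \eqref{eq:general_descent} at this pair gives $f(\vx_s)\le f(\vy_s)+\langle\nabla f(\vy_s),\vx_s-\vy_s\rangle+\tfrac{\mL_s}{2}\|\vx_s-\vy_s\|^2$, and subtracting $f^*$ turns the left side into $\delx_s$ and the leading right-hand term into $\dely_s$.

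Next I would estimate the remaining two terms. For the cross term, Young's inequality gives $\langle\nabla f(\vy_s),\vx_s-\vy_s\rangle\le\tfrac12\|\nabla f(\vy_s)\|^2+\tfrac12\|\vx_s-\vy_s\|^2$, so the squared-distance contributions combine into $\tfrac{\mL_s+1}{2}\|\vx_s-\vy_s\|^2$. To control $\|\nabla f(\vy_s)\|^2$, I would invoke Lemma~\ref{lem:general_gradient_value} at $\vy_s$, which yields $\|\nabla f(\vy_s)\|\le\max\{4L_1\dely_s,\sqrt{4L_0\dely_s}\}$ and hence $\|\nabla f(\vy_s)\|^2\le 16L_1^2(\dely_s)^2+4L_0\dely_s$ via $\max\{a,b\}\le a+b$; this is precisely where the quadratic term in $\dely_s$ and the extra $L_0\dely_s$ term enter, a feature that does not arise in the $L$-smooth estimate. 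Finally, from the definition \eqref{eq:define_y_s} one has $\vx_s-\vy_s=-\tfrac{\beta}{1-\beta}(\vx_s-\vx_{s-1})=-\tfrac{\beta}{1-\beta}\vm_{s-1}$, so $\|\vx_s-\vy_s\|^2\le\tfrac{1}{(1-\beta)^2}\|\vm_{s-1}\|^2$ using $\beta<1$ (and $s=1$ is trivial since $\vm_0={\bf 0}_d$, $\vy_1=\vx_1$). Substituting the three bounds and collecting constants yields an inequality of exactly the stated form.

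All the individual steps are elementary; the two places that require care are (i) verifying \eqref{eq:eta_local_smooth} so that Lemma~\ref{lem:local_smooth} legitimately applies to $(\vx_s,\vy_s)$ — the distance bound from Lemma~\ref{lem:gap_xs_ys} is essential here — and (ii) tracking how the $L_1\|\nabla f\|$ part of the generalized smoothness propagates through Lemma~\ref{lem:general_gradient_value} into the $(\dely_s)^2$ term. I expect (ii) to be the conceptual crux: it is the reason the bound is not simply a rescaling of Lemma~\ref{lem:delta_y_x} but instead carries a quadratic dependence on the function-value gap, which in turn is what later forces the quadratic-in-$\lam_y$ behavior of $\lam_x$ in \eqref{eq:define_lamx}. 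Step (i) is the more error-prone bookkeeping point, since it hinges on the precise form of the step-size restriction in \eqref{eq:general_parameter}.
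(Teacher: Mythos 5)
Your proof is correct and follows essentially the same route as the paper: apply the second inequality of \eqref{eq:general_descent} to the pair $(\vx_s,\vy_s)$, invoke Young's inequality on the cross term, bound $\|\nabla f(\vy_s)\|^2$ via Lemma~\ref{lem:general_gradient_value}, and rewrite $\|\vx_s-\vy_s\|$ in terms of $\|\vm_{s-1}\|$ using $\beta<1$. One small remark: your derivation correctly yields $8L_1^2(\dely_s)^2$ after halving $16L_1^2(\dely_s)^2$, whereas the statement (and the paper's displayed proof) write $8L_1(\dely_s)^2$; the paper has a harmless typo here ($L_1$ in place of $L_1^2$), so your bound is actually the accurate one and you need not force the constants to match the printed form.
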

\begin{proof}
    Since \eqref{eq:eta_local_smooth} holds, then using \eqref{eq:general_descent},
    \begin{align*}
        f(\vx_s) 
        &\le f(\vy_s) + \la \nabla f(\vy_s), \vx_s - \vy_s \ra + \frac{\mL_s}{2} \|\vx_s - \vy_s \|^2 \\
        &= f(\vy_s) - \frac{\beta}{1-\beta} \la \nabla f(\vy_s), \vx_s - \vx_{s-1} \ra + \frac{\mL_s\beta^2}{2(1-\beta)^2} \|\vx_s - \vx_{s-1} \|^2.
    \end{align*}
    Using Young's inequality and subtracting $f^*$ on both sides,
    \begin{align*}
        \delx_s &\le \dely_s + \frac{1}{2}\|\nabla f(\vy_s)\|^2 + \frac{(\mL_s+1)\beta^2}{2(1-\beta)^2}\|\vx_s - \vx_{s-1} \|^2 \le \dely_s + 8L_1\left(\dely_s\right)^2 + 2L_0\dely_s + \frac{\mL_s+1 }{2(1-\beta)^2}\|\vm_{s-1} \|^2,
    \end{align*}
    where we apply Lemma \ref{lem:general_gradient_value} and $\beta \in [0,1)$ for the last inequality. 
\end{proof}
Using Lemma \ref{lem:general_delta_y_x}, \eqref{eq:general_final_3}, and Lemma \ref{lem:general_sum_1}, we could bound $\delx_{t+1}$ as
\begin{align*}
    \delx_{t+1} &\le (2L_0+1)\lam_y + 8L_1\lam_y^2 + \frac{(\mL_s+1)\eta^2 d}{2(1-\beta)^2} \log \mJ_t \le (2L_0+1)\lam_y + 8L_1\lam_y^2 + \frac{2C_0^2 d}{2(1-\beta)^2} \log \tilde{\mJ}_T,
\end{align*}
where the last inequality applies \eqref{eq:C_0} and  \eqref{eq:log_J_t}. Recalling $\lam_x$ in \eqref{eq:define_lamx}, we find that 
\begin{align*}
    \delx_{t+1} \le \lam_x.
\end{align*}
Combining with \eqref{eq:general_induction_1},
the induction is thus complete. We prove the desired result in \eqref{eq:general_delta_s_1}. Finally, as an intermediate result, \eqref{eq:general_delta_s} is verified by \eqref{eq:general_final_3}. The proof of Proposition \ref{pro:general_delta_s} is complete.
%\end{proof}

\subsection{Proof of the main result}
Based on Proposition \ref{pro:general_delta_s}, we could prove the final convergence result as follows.
\begin{proof}[Proof of Theorem \ref{thm:general_smooth}]
    The proof for the final convergence rate follows the similar idea and some same estimations in the proof of Theorem \ref{thm:1}.
    Setting $t =T$ in \eqref{eq:general_delta_s}, it holds that with probability at least $1-\delta$,
    \begin{align}
         \frac{\eta}{2(1-\beta)} \sum_{s=1}^T \frac{ \left\| \bar{\vg}_s \right\|^2}{\|\tva_s\|_{\infty}} \le \frac{\eta}{2(1-\beta)}\sum_{s=1}^T \left\| \frac{\bar{\vg}_s}{\sqrt{\tva_s}} \right\|^2 \le \lam_y. \label{eq:general_final_5}
    \end{align}
    In what follows, we assume that \eqref{eq:general_delta_s_1} and \eqref{eq:general_delta_s} always hold and deduce the convergence bound under these two inequalities. Note that \eqref{eq:general_delta_s_1} and \eqref{eq:general_delta_s} hold with probability at least $1-\delta$ according to Proposition \ref{pro:general_delta_s}. Thus, the final convergence bound also holds with probability at least $1-\delta$.
    Applying $\tva_s$ in \eqref{eq:general_proxy_stepsize} and \eqref{eq:general_delta_s_1}, and following the similar analysis in \eqref{eq:upper_bound_asi}, 
    \begin{align*}
        \|\tva_s\|_{\infty} - \ep &\le \sqrt{2\sum_{j=1}^{s-1} (A \delx_j + (B+1)\|\bar{\vg}_{j}\|^2 + C) +\mH_s^2} \le \sqrt{2(B+1)\sum_{j=1}^{s-1} \|\bar{\vg}_j\|^2 + 2(A\lam_x+C)s +\mH^2 } \\
        &\le \sqrt{2(B+1)\sum_{s=1}^{T} \|\bar{\vg}_s\|^2 + 2(A\lam_x+C)T +\mH^2 }, \quad \forall s \in [T].
    \end{align*}
    Then combining with \eqref{eq:general_final_5}, and letting $\tilde{\lam}_y = \frac{2\lam_y(1-\beta)}{\eta}$,
    \begin{align*}
    \sum_{s=1}^T\|\bar{\vg}_s\|^2  &\le \tilde{\lam}_y \left(\sqrt{2(B+1)\sum_{s=1}^{T} \|\bar{\vg}_s\|^2 + 2(A\lam_x+C)T +\mH^2 } + \ep \right)\\
    &\le \tilde{\lam}_y\left(\sqrt{2(B+1)\sum_{s=1}^{T} \|\bar{\vg}_s\|^2}+\sqrt{  2(A\lam_x+C)T} + \mH + \ep\right)\\
    &\le \sum_{s=1}^T\frac{\|\bar{\vg}_s\|^2}{2} + \tilde{\lam}_y^2(B+1)+\tilde{\lam}_y\left(\sqrt{  2(A\lam_x+C)T} + \mH + \ep \right),
\end{align*}
where we apply Young's inequality for the last inequality.
Re-arranging the order and dividing $T$ on both sides, we get
\begin{align*}
    \frac{1}{T}\sum_{s=1}^T\|\bar{\vg}_s\|^2 \le 2\tilde{\lam}_y\left( \frac{\tilde{\lam}_y(B+1)+\mH + \ep}{T}+\sqrt{\frac{2(A\lam_x+C)}{T}} \right).
\end{align*}
\end{proof}

\section{Experiment}
In this section, we present an experiment to verify that AdaGrad can find a stationary point when the noise satisfies Assumption (A3).

\begin{figure}[h]
\centering
\includegraphics[width=0.8\textwidth]{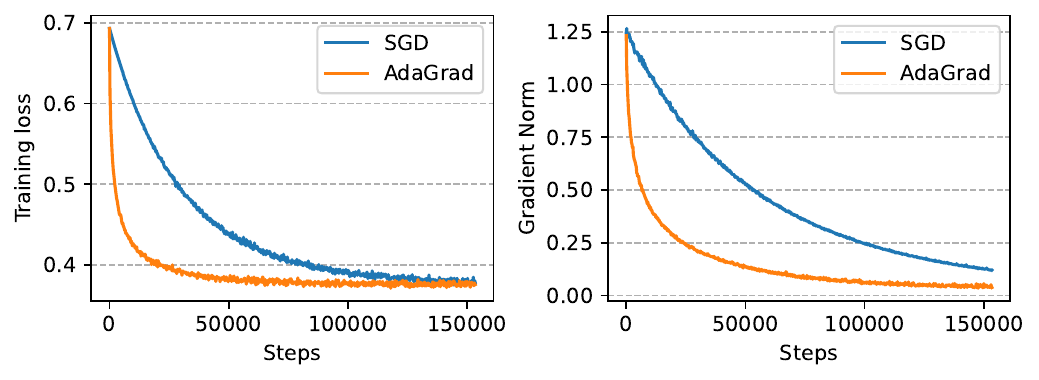}
\caption{Training loss vs. steps and gradient norms vs. steps using SGD and AdaGrad.}
\label{fig}
\end{figure}
\paragraph{Experimental Setup} We follow the experimental task outlined in \citep{khaled2022better}, where the objective is to minimize a regularized logistic regression problem defined as follows:
\begin{align}
\min_{x \in \mathbb{R}^d}\left\{\frac{1}{n}\sum_{i=1}^n \log \left(1+\exp(-a_i^\top x)\right)+\lambda \sum_{j=1}^d \frac{x_j^2}{1+x_j^2} \right\}. \label{eq
}
\end{align}
In \citep{khaled2022better}, it was verified that using uniform sampling over the a9a dataset and the loss function in \eqref{eq
}, the noise conforms to Assumption (A3) with $\hat{A}= 10.09, \hat{B}=0, \hat{C}=0.373$, which closely aligns with the theoretical values where $A=9, B=0, C=0.994$. We then executed both SGD and AdaGrad to minimize \eqref{eq
} using a batch size of 256. The learning rates were set to $7 \times 10^{-6}$ for SGD and $5 \times 10^{-4}$ for AdaGrad.

We utilized the a9a dataset and the PyTorch implementations of SGD and AdaGrad. The experiments were conducted on a single NVIDIA GeForce RTX 4090 GPU.

\paragraph{Results} We plotted the training loss and gradient norms against the number of steps in Figure \ref{fig}, training for 1200 epochs. The results indicate that both SGD and AdaGrad can find a stationary point given a sufficiently large number of steps $T$, thereby supporting the conclusion in Theorem \ref{thm:1}. 

% \subsection{Adaptive}
% We now have
% \begin{align*}
%     \mE\left[\frac{2B\sum_{s=1}^t\left\|\bar{\vg}_s \right\|^2}{\sqrt{\sum_{j=1}^t \|\bar{\vg}_j\|^2 + 2(A\max_{j \in [T]}\mG_j^2+C)T}}\right]\le \sum_{s=1}^t\mE\left[\frac{\left\|\bar{\vg}_s \right\|^2}{\sqrt{\sum_{j=1}^t \|\vg_j\|^2 + \mG_j^2}}\right] \le \sum_{s=1}^t\mE\left[\frac{\left\|\bar{\vg}_s \right\|^2}{\|\va_s\|_{\infty}}\right] \le \sum_{s=1}^t\mE\left[\left\|\frac{\bar{\vg}_s}{\sqrt{\va_s}} \right\|^2\right] \leq \del.
% \end{align*}
% Then we could rely on a simple inequality that for any two positive real numbers $X,C$,
% \begin{align*}
%     \frac{X}{\sqrt{X+C}} \ge \sqrt{X}- \sqrt{C}.
% \end{align*}
% We thus have
% \begin{align*}
%     \mE \left[\sqrt{\sum_{s=1}^t \|\bar{\vg}_s}\|^2 - \sqrt{(A\max_{j \in [T]}\mG_j^2+C)T}\right] \le \del.
% \end{align*}
% But we only have $\mE [\mG_j] \le \mG$, but how about $\mE[\max_{j \in [T]}\mG_j^2]$ ??.

\end{document}